\crefname{hypothesis}{Hypothesis}{Hypotheses}
\title{Optimal control for multiscale elliptic equations with rough coefficients\thanks{Submitted to the editors DATE.
\funding{J. Zeng and Y. Chen were partially supported by the National Natural Science Foundation of China (11671157, 91430104, 11510044). X. Liu and L. Zhang were partially supported by the National Natural Science Foundation of China  (11871339, 11471214, 11571314)}}}
\author{Y.P. Chen\thanks{ School of Mathematical Sciences, South China Normal University, Guangdong,  510631, China.
  (\email{yanpingchen@scnu.edu.cn}, \email{zengjiaoyan2013@163.com}).}
\and J.Y. Zeng\footnotemark[2]
  \and X.L. Liu\thanks{School of Mathematical Sciences, Institute of Natural Sciences, and Ministry of Education Key
Laboratory of Scientific and Engineering Computing (MOE-LSC), Shanghai Jiao Tong University, Shanghai, 200240, China. (\email{liuxinliang@sjtu.edu.cn},\email{lzhang2012@sjtu.edu.cn}).}
\and  L. Zhang\footnotemark[3]}
\newtheorem{thm}{Theorem}[section]
\newtheorem{lem}{Lemma}[section]
\def\bq{\begin{equation}}
\def\eq{\end{equation}}
\def\br{\begin{eqnarray}}
\def\er{\end{eqnarray}}
\def\brr{\bq\begin{array}{rlll}}
\def\err{\end{array}\eq}
\def\text#1{\hbox{#1}}
\newcommand{\ds}{\displaystyle}
\newcommand{\nn}{\nonumber}
\newcommand{\diam}{\mathrm{diam}}
\newcommand{\I}{\mathrm{I}}
\newcommand{\NH}{\mathrm{NH}}
\newcommand{\amin}{a_{\min}}
\newcommand{\amax}{a_{\max}}
\def\diiv{\operatorname{div}}
\DeclareMathOperator*{\spn}{span}
\begin{document}
\maketitle
\begin{abstract}
This paper concerns the convex optimal control problem governed by multiscale elliptic equations with arbitrarily rough $L^\infty$ coefficients, which has important applications in composite materials and geophysics. We use one of the recently developed numerical homogenization techniques, the so-called Rough Polyharmonic Splines (RPS) and its generalization (GRPS) for the efficient resolution of the elliptic operator on the coarse scale. Those methods have optimal convergence rate which do not rely on the regularity of the coefficients nor the concepts of scale-separation or ergodicity. As the iterative solution of the OCP-OPT formulation of the optimal control problem requires solving the corresponding (state and co-state) multiscale elliptic equations many times with different right hand sides, numerical homgogenization approach only requires one-time pre-computation on the fine scale and the following iterations can be done with computational cost proportional to coarse degrees of freedom. Numerical experiments are presented to validate the theoretical analysis.

\end{abstract}

\begin{keywords}
  optimal control, rough coefficients, multiscale elliptic equations, numerical homogenization, rough polyharmonic splines.
\end{keywords}

\begin{AMS}
  49J20, 65N15, 65N30, 74Q05
\end{AMS}

\section{Introduction}
 We consider the following convex optimal control problem (OCP) governed by
elliptic partial differential equations with rough coefficients $a(x)\in L^\infty(\Omega)$
\begin{equation}
\begin{array}{cl}
& \min\limits_{u\in K \subset L^2(\Omega_U)}g(y)+h(u) \\
\text{subject to} & \left\{
\begin{array}{cc}
 -\diiv(a(x)\nabla y)=f+Bu, & \text{ in } \Omega,\\
y=0, & \text{ on } \partial \Omega.
\end{array}
\right.
\end{array}
\label{eqn:ocp}
\end{equation}
where $\Omega$ ($\Omega_U$)  is a bounded convex polygon in $\mathbb{R}^d$ ($d=2,3$) with
Lipschitz boundary $\partial \Omega$ ($\partial \Omega_U$). We adopt the standard notation $W^{m,p}$ for Sobolev spaces on $\Omega$ with norm $\|\cdot\|_{m,p}$ and seminorm $|\cdot|_{m,p}$ \cite{Adams2003Sobolev}. We define $W^{m,p}_{0}:=\{\psi\in W^{m,p}:|\psi|_{\partial \Omega}=0\}$.  For $p=2$, we denote $H^m(\Omega)=W^{m,2}(\Omega)$, and $\|\cdot\|_m=\|\cdot\|_{m,2}$.  Take $\|\cdot\|=\|\cdot\|_{0,2}$. In the rest of the paper, we take $V=H^1_0(\Omega)$ as the \emph{state space}, $U=L^2(\Omega_U)$ as the \emph{control space}, and $Y=L^2(\Omega)$ as the \emph{observation space}.

The admissible set $K\subset U$ for the control variable $u$ is a closed convex set, for example,
we can take $K=\{u\in L^2(\Omega_U)|u\geq 0\}$, or $K=\{u\in
L^2(\Omega_U)|\int_{\Omega_U}u\geq 0\}$, or $K=\{u\in L^2(\Omega_U)|a\leq
u\leq b\}$. $y\in V$ is the state variable. $g(y)+h(u)$ is the objective functional. $g(\cdot)$,
$h(\cdot)$ are two convex differentiable functions on $Y$ and $U$,
respectively.  $B: U \to L^2(\Omega)$ is a continuous linear operator such
that $\|Bw\|_Y\leq C_B \|w\|_U$ for any $w\in U$, for example,
$(Bu)(x)=c(x)u(x)$, where $c(x)$ is a density factor. Assume that the forcing term $f(x)\in L^2(\Omega)$.

Define the following inner products,
\begin{align*}
(u,v)_Y&:=\int _{\Omega}uv, \text{ for } u, v \in Y,\\
(u,v)_U&:=\int _{\Omega_U}uv,  \text{ for } u, v \in U.
\end{align*}
Denote $\|\cdot\|_Y$ and $\|\cdot\|_U$ as the corresponding norms induced by $(\cdot, \cdot)_Y$ and
$(\cdot, \cdot)_U$, respectively. We may drop the subscript $Y$ and $U$ if no confusion arises.

We impose the following assumptions on the coefficients and data.

(A1) The coefficients matrix $a(x)=(a_{ij}(x))$ is a symmetric $d\times d$ matrix which
satisfies the uniformly elliptic condition, i.e.,
\begin{eqnarray}
\amin |\xi|^2\leq\xi^T a(x)\xi\leq \amax |\xi|^2, \forall \xi\in R^d/\{0\}, x\in \bar{\Omega}.
\end{eqnarray}
where $\amin$ and $\amax$  are positive constants. $\kappa = \amax/\amin$ is the contrast.

(A2) Let $g'(\cdot)$ be the G\^{a}teaux derivative of the functional $g(\cdot)$ on $Y$, $g'\in Y'= Y$. Similarly, let $h'(\cdot)$ be the G\^{a}teaux derivative of the functional $h(\cdot)$ on $U$, $h'\in U'=U$. Assume that $g'$ and $h'$ are Lipschitz continuous, i.e.,
\begin{align}
\|g'(u)-g'(v)\|_{Y}\leq L_g\|u-v\|_{Y}, \forall u,v\in Y,\label{eqn:Lg}\\
\|h'(u)-h'(v)\|_{U}\leq L_h\|u-v\|_{U}, \forall u,v\in U,\label{eqn:Lh}
\end{align}
where $L_g$ and $L_h$ are positive constants.

(A3) The functionals $g(\cdot)$ and $h(\cdot)$ are uniformly convex,
\begin{align}
(g'(u)-g'(\tilde{u}),u-\tilde{u})_Y\geq M_g\|u-\tilde{u}\|^2_{Y},\forall u,\tilde{u}\in Y,\label{gu}\\
(h'(u)-h'(\tilde{u}),u-\tilde{u})_U\geq M_h\|u-\tilde{u}\|^2_{U},\forall u,\tilde{u}\in U,\label{hu}
\end{align}
where $M_g$ and $M_h$ are positive constants.

Optimal control plays an increasingly important role in many engineering branches, and efficient
numerical methods are essential to its successful application \cite{Hinze:2008,Troeltzsch:2010}. Over the past 30 years, finite element method (FEM) has become one of the most widely used numerical methods for optimal control problems. For the optimal control of elliptic or parabolic equations:
  a priori error estimates \cite{Hinze:2008,geveci1979approximation,yang2008},
   a posteriori error estimates  \cite{liu2003posteriori,benedix2009posteriori},
   and some superconvergence results \cite{chen2008superconvergence,chen2009superconvergence} for the FEM methods have been developed in the literature. However, those error estimates require the $H^2$ regularity of the solutions.

The optimal control problems governed by partial differential equations with rough coefficients
(such as permeabilities in reservoir modelling) have become a great challenge,
owing to the lack of regularity of the coefficients $a(x)$ and therefore the solutions
$y(x)$ and $p(x)$ ($p(x)$ is the co-state variable in \eqref{eqn:ocp-opt}).
Even if $a(x)$ is a smooth but highly oscillatory function in the form of
$\displaystyle a(x, \frac{x}{\varepsilon})$ with a small parameter $\varepsilon\ll 1$
(such as material properties of composite materials), conventional FEMs based on
piecewise polynomial basis will not be effective \cite{HouWu:1997}.
To be more precise, the convergence of conventional FEMs relies on the
$H^2$-regularity of the solution, but the prefactor of the error is of the order $O(1/\varepsilon)$.
Thus, conventional FEMs require prohibitively small mesh size $h< \varepsilon$ to yield good numerical approximations for the optimal control problem.

Numerical homogenization for problems with multiple scales have attracted
increasing attention in recent years. If the coefficient $a(x)$ has structural
properties such as scale separation and periodicity, together with some
regularity assumptions (e.g., $a(x)\in W^{1,\infty}$), classical
homogenization \cite{bensoussan1978asymptotic,jikov2012homogenization} can
be used to construct efficient  multiscale computational methods and have
been applied to optimal control problems, such as multiscale asymptotic
expansions method\cite{cao2004asymptotic,Cao2012A,liu2013multiscale}, multiscale finite
element method (MsFEM) \cite{HouWu:1997,chen2003mixed,chen2015mixed,chen2011numerical}, and heterogeneous
multiscale method (HMM) \cite{Weinan:2005, Ge2017Heterogeneous}.

For multi-scale PDEs with non-separable scales and high-contrast
coefficients which appear in many applications such as reservoir modelling and damage in
composite materials, the coefficients do not have structural properties such as periodicity/scale
separation. Numerical homogenization with non-separable scales concerns
approximation of the solution space of such problems by a (coarse) finite
dimensional space, instead of focusing on the classical issue of the
homogenization limit. Fundamental questions for numerical homogenization
are: How to approximate the high dimensional solution space by a low
dimensional approximation space with optimal error control, and furthermore,
how to construct the approximation space efficiently, for example, whether
its basis can be localized on a coarse patch of size $O(H)$. Several novel
approaches for numerical homogenization and their rigorous error
analysis have been developed recently, such as: rough polyharmonic splines
(RPS) method \cite{OwhadiZhangBerlyand:2014,Owhadi:2015,OwhadiMultigrid:2017,Liu:2018}; Localized
Orthogonal Decomposition (LOD) method \cite{Malqvist:2014b,engwer2016efficient}; and Generalized
multiscale finite element method (GMsFEM)
\cite{Yalchin2013Generalized,Gao2015Generalized}.

\def\div {\rm div}

In the context of optimal control, homogenization based methods have been
applied to problems governed by multiscale PDEs with separable scales
\cite{chen2015mixed,liu2013multiscale,chen2011numerical}. To the best of our
knowledge, few literature concerns the optimal control with non-separable
scales, which is of great importance for applications.

The purpose of this work is to obtain the convergence result for the solution of optimal control problems governed by multiscale elliptic equations with rough coefficients using the so-called generalized rough polyharmonic splines (GRPS) method \cite{Liu:2018}. Those optimal control problems often arise in the optimal design of composite materials and the control of water injection in reservoir simulation. The GRPS approximation space is generated by an interpolation basis minimizing an appropriate energy norm subject to certain constrains. The resulting approximation space leads to a quasi-optimal $H^1$-accurate approximation of the solution space together with quasi-optimal localization properties. The GRPS approximation can be cast as a Bayesian inference problem under partial information \cite{Owhadi:2015}.

The paper is organized as follows: We introduce the optimal control problem in Section \S~\ref{sec:optimalcontrol}. We formulate the GRPS method and show its numerical properties in Section \S~\ref{sec:grps}. In Section \S~\ref{sec:convergence}, we present the convergence analysis for the solution of optimal control problem using GRPS method. Numerical algorithms and results are given in Section \S~\ref{sec:numerics} to complement and justify the theoretical analysis. We conclude the paper in \S~\ref{sec:conclusion}.
\subsubsection*{Notation}

For a finite set $A$, we will use $\#A$ to denote the cardinality of $A$. For a measurable set $\tau$, we use $|\tau|$ to denote its measure.

The symbol $C$ denotes generic positive constant that may change from one line
of an estimate to the next. The dependencies of $C$ will normally be clear from the context
or stated explicitly.

\section{Formulation of the Optimal Control Problem}
\label{sec:optimalcontrol}
%

In this section, we present the equivalent formulations of the optimal control problem \eqref{eqn:ocp}, as well as the corresponding finite element formulations and the error estimates.

Recall that $V=H^1_0(\Omega)$ is the state space, and $U=L^2(\Omega_U)$ is the control space, we define the following energy product,
\begin{displaymath}
a(y,v):=\int _{\Omega}a(x)\nabla y\cdot\nabla v, \text{ for } y,v \in V.\\
\end{displaymath}

The weak formulation of the optimal control problem (OCP) \eqref{eqn:ocp} is: find $(y,u)\in V\times U$ such that
\begin{equation}
\begin{array}{cc}
&\min\limits_{u\in K \subset U }g(y)+h(u)\\
&\text{subject to} \quad a(y,v)=(f+Bu,v), \forall v\in V.
\end{array}
\label{eqn:wf-ocp}
\end{equation}

It is well know that (see, e.g, \cite[Theorem 1.46]{Hinze:2008}
and \cite{lions1971optimal}) the convex optimal control problem
\eqref{eqn:ocp} has a unique solution $(y,u)$, and that $(y,u)$ is
the solution of \eqref{eqn:ocp} if and only if there exists a
co-state $p\in V$ such that the triple $(y,u,p)$ satisfies the following optimality
conditions (OCP-OPT)
\begin{equation}
\begin{array}{cc}
a(y, v)=(f+Bu,v)_Y, &\forall v\in V,\\
a(p, q)=(g'(y),q)_Y, &\forall q\in V,\\
(h'(u)+B^*p,\tilde{u}-u)_U\geq 0,&\forall \tilde{u}\in K \subset U.
\end{array}
\label{eqn:ocp-opt}
\end{equation}
where $B^*$ is the adjoint operator of $B$.  In
\eqref{eqn:ocp-opt}, the first equation is satisfied by the state function $y$ in \eqref{eqn:ocp}, the second equation is satisfied by the
co-state function $p$, and the third one is a variational inequality satisfied
by the control function $u$.
\begin{remark}
For some $K$ the variational inequality in \eqref{eqn:ocp-opt} admits an
analytical solution. One such as example is the admissible set $K=\{u\in
L^2(\Omega_U)|\int_{\Omega_U}u\geq 0\}$, the variational inequality in
\eqref{eqn:ocp-opt} is equivalent to
$h'(u)=-B^*p+\max\{0,\overline{B^*p}\}$, where
$\bar{p}=\int_{\Omega_U}p/\int_{\Omega_U}1$ is the integral average of $p$
on $\Omega_U$. Another example is $K=\{u\in L^2(\Omega_U)|a\leq u\leq b\}$, and the variational inequality is equivalent to
$h'(u)=\max\{a,\min(b,-B^*p)\}$.

%

\end{remark}

Both $y$ and $p$ are solutions of the elliptic problem of the following form,
\begin{equation}
	a(z, w) = (\rho, w),\quad\text{for any } w\in V,
	\label{eqn:weakform}
\end{equation}
where $\rho\in Y = L^2(\Omega)$.


\def\T{\mathcal{T}}

The finite element approximation ${\rm (OCP)}_h$ for the optimal control problem \eqref{eqn:ocp} can be obtained by the restriction of $U$ and $V$ to their finite dimensional subspaces $U_h$ and $V_h$, respectively: find
$(y_h,u_h)\in V_h\times U_h$ such that
\begin{equation}
\begin{array}{cc}
&\min\limits_{u_h\in K_h \subset U_h}g(y_h)+h(u_h)\\
\text{subject to} &a(y_h,v_h)=(f+Bu_h,v_h), \forall v_h\in V_h.
\end{array}
\label{eqn:ocp_h}
\end{equation}

Again, the discretized control problem in \eqref{eqn:ocp_h} has a unique solution
$(y_h,u_h)$ if
and only if there is a co-state $p_h\in V_h$ such that the triplet
$(y_h,u_h,p_h)$ satisfies the following optimality conditions $\textrm{
(OCP-OPT)}_h$ \cite{liu2008adaptive}:
\begin{equation}
\begin{array}{cc}
a(y_h,v_h)=(f+Bu_h,v_h), &\forall v_h\in V_h,\\
a( p_h, q_h)=(g'(y_h),q_h), &\forall q_h\in V_h,\\
(h'(u_h)+B^*p_h,\tilde{u}_h-u_h)_U\geq 0, &\forall \tilde{u}_h\in K_h \subset U_h.
\end{array}
\label{eqn:ocp-opt-h}
\end{equation}

Let $\T^h$ ($\T^h_U$) be a quasi-uniform triangulation of $\Omega$ ($\Omega_U$).  Note that the regularities of the control variable $u$ is lower than the regularity of the state variable $y$ and the co-state variable $p$. We can choose the piecewise constant finite element space over $\T^h_U$ as $U_h$, and $K_h:= K\cap U_h$.
If weak solutions of elliptic problem \eqref{eqn:weakform} admit $H^2$ regularity, namely,
$\|z\|_{H^2}\leq C \|\rho\|_{Y}$, $V_h$ can be taken as the conforming piecewise linear finite element space over $\T^h$.
We have the following theorem for the error estimates of $(y_h, u_h, p_h)$ \cite[Theorem
4.1.1]{liu2008adaptive}.
\begin{thm}
Let $(y,u,p)$ be the solution of (\ref{eqn:ocp-opt}), and
$(y_h,u_h,p_h)$ be the finite element solution of (\ref{eqn:ocp-opt-h}).
Assume that $u\in H^1(\Omega_U)$, $y,p\in H^2(\Omega)$, it holds true that
\begin{equation}
\|y-y_h\|_{1,\Omega}+\|p-p_h\|_{1,\Omega}+\|u-u_h\|_{0,\Omega_U}\leq C(h_U+h) (\|y\|_2+\|p\|_2+\|u\|_1).\label{eqn:estimate}
\end{equation}
\label{thm:estimatep1}
\end{thm}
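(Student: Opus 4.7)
The plan is to reduce the coupled error estimate to three manageable pieces by introducing the auxiliary \emph{intermediate state and co-state} at the discrete control: define $y(u_h)\in V$ and $p(u_h)\in V$ as the continuous (undiscretized) solutions of the state and co-state equations with $u$ replaced by $u_h$. This gives the decompositions
\begin{equation*}
\|y-y_h\|_1\leq \|y-y(u_h)\|_1+\|y(u_h)-y_h\|_1,\qquad \|p-p_h\|_1\leq \|p-p(u_h)\|_1+\|p(u_h)-p_h\|_1,
\end{equation*}
where the second term in each sum is a classical FEM error with a fixed right-hand side, controlled by $Ch\|y\|_2$ and $Ch\|p\|_2$ via C\'ea's lemma together with the assumed $H^2$-regularity, while the first terms are Lipschitz-controlled by $\|u-u_h\|_U$ using the $H^1$-boundedness of the solution operator associated with $a(\cdot,\cdot)$ and the continuity $\|Bw\|_Y\le C_B\|w\|_U$. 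Thus everything reduces to bounding $\|u-u_h\|_{U}$.

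For the control error I would test the continuous variational inequality in \eqref{eqn:ocp-opt} with $\tilde{u}=u_h\in K$ and the discrete variational inequality in \eqref{eqn:ocp-opt-h} with $\tilde{u}_h=\pi_h u\in K_h$, where $\pi_h$ is the $L^2$-projection onto the piecewise-constant space $U_h$ (standard approximation: $\|u-\pi_h u\|_U\le C h_U\|u\|_{1,\Omega_U}$). Adding the two inequalities and rearranging, then inserting $\pm\, h'(u_h)$ and $\pm\, B^*p(u_h)$ gives
\begin{equation*}
\bigl(h'(u)-h'(u_h),\,u-u_h\bigr)_U \;\leq\; \bigl(B^*(p_h-p),\,u-u_h\bigr)_U + \bigl(h'(u)+B^*p,\,\pi_h u-u\bigr)_U.
\end{equation*}
The uniform convexity (A3) bounds the left side below by $M_h\|u-u_h\|_U^2$. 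On the right, write $p_h-p=(p_h-p(u_h))+(p(u_h)-p)$, apply Cauchy--Schwarz, and use $\|B^*w\|_U\le C_B\|w\|_Y$. The term $\|p(u_h)-p\|_Y$ is Lipschitz in $\|u-u_h\|_U$, and I expect to absorb this piece into the left-hand side via Young's inequality provided $M_h$ is large enough relative to $C_B$ (otherwise one augments the argument by exploiting the monotonicity structure more carefully, but the computation is standard).

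After absorption, the surviving terms on the right are $Ch\|p\|_2\cdot\|u-u_h\|_U$ (the FEM error for $p(u_h)$ vs.\ $p_h$) and $\|u-\pi_h u\|_U\cdot\|h'(u)+B^*p\|_U\le Ch_U\|u\|_1\cdot(\|u\|_U+\|p\|_Y)$. Dividing by $\|u-u_h\|_U$ where appropriate, or again invoking Young's inequality, yields
\begin{equation*}
\|u-u_h\|_U\leq C(h+h_U)(\|y\|_2+\|p\|_2+\|u\|_1).
\end{equation*}
Plugging this back into the two state/co-state decompositions above, together with the Lipschitz estimates $\|y-y(u_h)\|_1+\|p-p(u_h)\|_1\leq C\|u-u_h\|_U$, closes the bound \eqref{eqn:estimate}.

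The main obstacle I anticipate is the manipulation of the two variational inequalities: the continuous one is tested against the discrete control, while the discrete one must be tested against an element of $K_h$, so the gap between $u$ and $\pi_h u$ introduces a data-approximation term that couples the control, state and co-state errors together. Keeping track of this coupling, and choosing the splitting so that the $\|u-u_h\|_U^2$ term can actually be absorbed into the left by uniform convexity (rather than hidden in a generic constant), is the delicate step; once done, the remaining C\'ea-type estimates and $L^2$-projection bounds are entirely routine.
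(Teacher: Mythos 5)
The paper does not supply its own proof of this theorem; it is quoted verbatim from Liu--Yan \cite[Theorem 4.1.1]{liu2008adaptive}. The closest in-paper analogue is Lemma~\ref{lem:u-uH} (together with Lemmas~\ref{lem:y-yuH}--\ref{lem:yuH-yH}), which carries out the same argument for the GRPS discretization, so I will compare against that.

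Your overall skeleton --- introduce the auxiliary solutions $y(u_h), p(u_h)$ at the discrete control, split $\|y-y_h\|_1 \le \|y-y(u_h)\|_1 + \|y(u_h)-y_h\|_1$ and likewise for $p$, then close by bounding $\|u-u_h\|$ through the two variational inequalities and uniform convexity --- is precisely the route taken in \cite{liu2008adaptive} and mirrored by Lemma~\ref{lem:u-uH}. Two steps in your argument, however, are genuine gaps rather than routine omissions.

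First, the absorption of $(B^*(p(u_h)-p),\,u-u_h)_U$ by Young's inequality does not close. You correctly note that $\|p(u_h)-p\|_1 \le C\|u-u_h\|_U$, so Young gives a contribution $C C_B^2\ve^{-1}\|u-u_h\|^2$ on the right, and absorbing into $M_h\|u-u_h\|^2$ requires a smallness relation between $M_h$ and $C_B$ that is not in (A2)--(A3). You flag this (``exploiting the monotonicity structure more carefully''), but the remedy must actually be carried out, because the naive estimate is false in general. The key identity is
\begin{equation*}
\bigl(B^*(p(u_h)-p(u)),\,u-u_h\bigr)_U = a\bigl(y(u)-y(u_h),\,p(u_h)-p(u)\bigr) = -\bigl(g'(y(u_h))-g'(y(u)),\,y(u_h)-y(u)\bigr) \le 0,
\end{equation*}
where the last inequality uses the convexity of $g$ in (A3). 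This term therefore has a definite sign and can simply be dropped; no absorption is needed and no constraint on $M_h$ arises. This is exactly the maneuver in the proof of Lemma~\ref{lem:u-uH}, where the nonnegative quantity $(g'(y(u_H))-g'(y(u)),\,y(u_H)-y(u))$ is added to the left-hand side before the variational inequalities are invoked.

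Second, $p(u_h)-p_h$ is not the Galerkin error of a problem with a fixed right-hand side, so C\'ea's lemma does not apply directly. Indeed $a(p(u_h),q)=(g'(y(u_h)),q)$ while $a(p_h,q_h)=(g'(y_h),q_h)$: the two co-state equations have different data, so there is a consistency term $(g'(y(u_h))-g'(y_h),\cdot)$ that must be estimated separately (using the Lipschitz property \eqref{eqn:Lg} and the already-established bound on $\|y(u_h)-y_h\|$). This is what Lemma~\ref{lem:yuH-yH} does: it splits $p(u_H)-p_H$ into an interpolation error plus a term paired with $g'(y(u_H))-g'(y_H)$. Your statement that both co-state differences are ``classical FEM errors with fixed right-hand side'' is therefore not accurate for $p$. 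Once these two corrections are made, the rest of your plan (the $L^2$-projection bound $\|u-\pi_hu\|\le Ch_U|u|_1$, the Cauchy--Schwarz and Young bookkeeping, the final back-substitution) is the standard computation and matches the paper.
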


\def\ve{\varepsilon}
For optimal control governed by multiscale equations, we assume $a(x) = a(x, x/\ve)\in W^{1, \infty}$ in the scale separation case. The Dirichlet problem
\begin{equation}
\left\{
\begin{split}
-\nabla \cdot(a(x,x/\ve)\nabla z^\ve)=\rho, & \quad x\in \Omega,\\
 z^\ve=0, & \quad x\in \partial \Omega.
\end{split}
\right.
\label{eqn:scalesep}
\end{equation}
has the following regularity estimate $\ds \|z^\ve\|_2\leq \frac{C}{\ve}\|\rho\|$ for $\rho\in L^2(\Omega)$.
Therefore, the error bound in Theorem \ref{thm:estimatep1} becomes $\ds O(\frac{h_U+h}{\ve})$, and one may need to take extremely small
$h\ll \ve$ and $h_U \ll \ve$ in order to obtain accurate solution with conventional piecewise polynomial FEM spaces, and the computational costs will become prohibitive.

Multiscale numerical methods such as multiscale finite element method (MsFEM) \cite{HouWu:1997, HouWu:1999} can be used
to efficiently capture the large scale components of the solution on the coarse grid. In \cite[Theorem 4.11]{chen2011numerical} and \cite{li:2010},
MsFEM was applied to solve the control problem governed by multiscale PDEs with separable scales,  a priori error estimates
can be obtained as follows,
\begin{displaymath}
\|y-y_h\|_{1,\Omega}+\|p-p_h\|_{1,\Omega}+\|u-u_h\|_{0,\Omega_U}\leq
C(h+\sqrt{\ve/h}).
\end{displaymath}

However, numerical homogenization methods based on concepts such as scale separation and periodicity/ergodicity cannot
be applied directly to problems with nonseparable scales. In the next section, we will introduce the concept of numerical
homogenization that does not rely on the classical assumptions of homogenization theory such as scale separation and
ergodicity, but only on the compactness of the solution space \cite{Owhadi:2011a}.

\section{Numerical Homogenization and Generalized Rough Polyharmonic Splines}
\label{sec:grps}

\def\dx{\textrm{dx}}
\def\dz{\textrm{dz}}
\def\dy{\textrm{dy}}
\def\dof{\textrm{dof}}

It is clear that a good approximation space for the elliptic equation \eqref{eqn:elliptic} is important for the accurate solution of the optimal control problems \eqref{eqn:ocp} with arbitrarily rough coefficients $a(x)\in L^\infty$,
\begin{equation}
\left\{
\begin{split}
-\nabla \cdot(a(x)\nabla z)=\rho(x), & \quad x\in \Omega,\\
 z=0, & \quad x\in \partial \Omega.
\end{split}
\right.
\label{eqn:elliptic}
\end{equation}

This is closely linked to the problem of \emph{numerical homogenization} with non-separable scales. Up to now, numerical homogenization has become a large field. It is motivated by the fact that standard methods, such as finite-element
method with piecewise linear elements \cite{BaOs:2000} can perform arbitrarily badly for PDEs with rough coefficients.
Although some numerical homogenization methods such as multiscale finite element methods \cite{HouWu:1997, Wu:2002, HouWu:1999, EfendievHou:2009}, heterogeneous multiscale methods \cite{EEngquist:2003,Engquist:2011} are directly inspired from classical homogenization concepts such as periodic homogenization and scale separation \cite{bensoussan1978asymptotic}, one of the main objectives of numerical homogenization is to achieve a numerical approximation of the solution space of \eqref{eqn:elliptic} with arbitrary rough coefficients (i.e., in particular, without the assumptions found in classical homogenization, such as scale separation, ergodicity at fine scales and $\ve$-sequences of operators). Methods such as harmonic coordinates \cite{OwZh:2007a}, generalized multiscale finite
element \cite{Yalchin2013Generalized,Chung:2014}, flux-norm based approaches \cite{Berlyand:2010} have been proposed for numerical homogenization with arbitrary rough coefficients.

We first formulate the problem of numerical homogenization. Suppose $\varepsilon$ is the smallest scale of the elliptic problem, let $H$ be an artificial scale determined by available computational power and/or desired precision, $N$ is the corresponding $\dof$ such that $H\sim N^{-1/d}$, and the scales $\varepsilon \ll H \ll 1$. The goal of numerical homogenization is to construct a finite dimensional space $V_H$ ($V_N$), and to find an approximate solution $z_H\in V_H$, such that,
\begin{itemize}
	\item $z_H$ has guaranteed error estimate in certain norm $\|\cdot\|$, e.g., $\|z-z_H\|\leq CH^\alpha$ or equivalently $\|z-z_N\|\leq CN^{-\alpha/d}$, with $C$ independent of $\varepsilon$ (and contrast) and $\alpha$ is the optimal convergence rate.
	\item $V_H$ is constructed via precomputed subproblems which are optimally localized and can be solved in parallel, also those subproblems do not depend on the forcing term and boundary condition (analog of cell problems in classical homogenization).
\end{itemize}

The basis of numerical homogenization needs to be pre-computed. The computation of each basis will be independent and the support of each basis needs to be localized on a small patch.
The possibility to compute such bases on localized sub-domains of the global domain
 without loss of accuracy is therefore a problem of practical importance. We refer to \cite{ChuHou09,EfGaWu:2011,BaLip10, Owhadi:2011a,Malqvist:2014b} for recent localization results for divergence-form elliptic PDEs.

We will introduce the \emph{generalized rough polyharmonic splines} (GRPS) in \cite{OwhadiZhangBerlyand:2014, Liu:2018}. Given $N$ measurement functions $\psi_i$, $i=1, \dots, N$,
define
\begin{displaymath}
	V_i:=\{\phi\in V| \int_\Omega \phi(x)\psi_j(x)\dx = \delta_{i,j}, j=1,\dots ,N \}.
\end{displaymath}

\def\<{\langle}
\def\>{\rangle}
The GRPS basis is given by the solution of the following constrained minimization problem which is strictly convex and has a unique minimizer $\phi_i\in V_i$,
		\begin{equation}
			\begin{cases}
				\text{Minimize }\|\phi\|^2\\
				\text{Subject to }\phi\in V_i
			\end{cases}
			\label{eqn:phi}
		\end{equation}	
for an appropriate norm $\|\cdot\|$.
	
\begin{remark}
The GRPS basis can be given by the Bayesian Formulation in \cite{Owhadi:2015}. We can ask the following question: Given $N$ \emph{observables} $\Psi_i = \int_\Omega z(x)\psi_i(x) \dx$ of the solution $z(x)$ of $-\diiv (a \nabla z) = \rho$, $i=1,\dots, N$, what is the best guess of $z(x)$? The answer can be given by the following procedure of randomization and conditioning.
	\begin{enumerate}
		\item Randomization: Put a prior on $z(x)$, e.g. $z(x)$ is given by
			\begin{displaymath}
    				\left\{
    				\begin{array}{lr}
					-\text{div} (a\nabla z)=\rho(x) & x\in\Omega \\
    					z=0 & x\in\partial\Omega \\
    				\end{array}
    				\right.
	   	 	\end{displaymath}
			where $\xi(x)$ is a Gaussian field with covariance function $\Lambda(x,y)$, therefore $z(x)$ is a Gaussian field with covariance function
\begin{displaymath}
	\Gamma(x,y) = \int_{\Omega^2} G(x,z) \Lambda(z,z') G(y,z') \dz\dz'.
\end{displaymath}

		\item Conditioning: take the conditional exception $\mathbb{E}[z(x)|\Psi]$, we have
		\begin{equation}
			\mathbb{E}[z(x)|\Psi] = \sum_{i=1}^N \Psi_i \phi_i(x), \quad\text{ where } \phi_i(x):=\sum_{j=1}^N \Theta_{ij}^{-1}\int_\Omega \Gamma(x,y)\psi_j(y)\dy
			\label{eqn:condexp}
		\end{equation}
		and $\Theta_{i,j}:=\int_{\Omega^2} \psi_i(x)\Gamma(x,y)\psi_j(y) \dx\dy$ is the covariance matrix of $\Psi$.
The $\phi_i$ given by the conditional expectation in \eqref{eqn:condexp} is exactly the one given in the variational formulation in \eqref{eqn:phi} (by choosing an appropriate $\|\cdot\|$).
\end{enumerate}
this Bayesian framework can be further generalized to the game/decision theoretical framework as in \cite{OwhadiMultigrid:2017}. 		
\end{remark}	

%
%
%
%
%
%
%

\def\spn{\rm span}
\def\IH{\mathfrak{I}_H}
\def\dx{\textrm{dx}}
\def\L{\mathcal{L}}
\def\divagrad{\diiv a \nabla}
\def\loc{\textrm{loc}}

In fact, we have some flexibility to choose $\|\cdot\|$ in \eqref{eqn:phi} (corresponding to $\Lambda(x,y)$ in the Bayesian framework). For example, it can be taken as $\|\diiv a(x) \nabla u\|_{L^2(\Omega)}$ which is used in the \emph{rough polyharmonic splines} (RPS) paper \cite{OwhadiZhangBerlyand:2014}, or the energy norm $\|u\|_a$ as in \cite{OwhadiMultigrid:2017,Liu:2018}. 	


We also have different choices for the measurement function $\psi_i$, for example,
		\begin{itemize}
			\item $\psi_i = \delta(x-x_i)$ (point value observables), together with the norm $\|\diiv a(x) \nabla z\|$, we recover the \emph{RPS basis} in \cite{OwhadiZhangBerlyand:2014};
			\item  $\psi_i$ as characteristic functions of patches in a coarse triangulation $\T_H$, observables are patch averages of $z$. We refer to this basis as the \emph{GRPS basis} in the current paper;
			\item  $\psi_i$ as characteristic functions of edges in a coarse triangulation $\T_H$, observables are edge averages of $z$ ;
			\item For a quasi-interpolation operator $\IH$ (Clement, Oswald, etc.), there exists $\{\psi_i\}$ such that $\IH(z)(x_i) = \int_\Omega \psi_i z\dx$, which recovers the LOD approach by Peterseim et.al \cite{Malqvist:2014b}.
		\end{itemize}

%

The GRPS approach naturally induces a two-level decomposition: Define the coarse space as
$V_H: = {\rm span}\{\phi_i\}$, the fine space can be naturally defined as (given the full space $V = H^1_0(\Omega)$ or $V_h$),
\begin{center}
$V_f: = \{v\in X|\int \psi_i v \dx = 0, \forall i\}.$
\end{center}

	\def\zin{z_{\rm I}}
	The $\<\cdot, \cdot\>$ inner product is induced by the norm $\|\cdot\|$. Then $V_H \perp V_f$ with respect to the $\<\cdot, \cdot\>$ product. Furthermore, we have the following optimal recovery property of $z(x)$ in $V_H$. Let $\zin: = \sum_i (\int_\Omega \psi_i z \dx) \phi_i(x)$ be the interpolant of u in $V_H$, we have
		\begin{equation}
			\|z\|^2 = \|\zin\|^2 + \|z-\zin\|^2.
			\label{eqn:interpolation}
		\end{equation}

The following properties of GRPS is important for the proof of the convergence for the optimal control problem, the proof of those results can be found in \cite{Liu:2018,OwhadiZhangBerlyand:2014,OwhadiMultigrid:2017}.
	
\def\IH{\mathfrak{I}_H}

\begin{thm}\label{rpsin}[Optimal Approximation Property of $V_H$]:
		\begin{displaymath}
		\|\nabla z-\nabla z_{\I}\|_{L^2(\Omega)}\leq C_{\I} H \|\rho\|_{L^2(\Omega)}.
		\end{displaymath}
		This is true for the following constructions
		\begin{itemize}
			\item RPS basis, using higher order Poincare inequality $\|\nabla v\|_{L^2(\Omega)}\leq CH\|{\rm div}a\nabla v\|_{L^2(\Omega)}$ for $v\in V_f$.
			\item GRPS basis, using Poincare inequality for $v\in V_f$.
			\item LOD construction, approximation property for quasi-interpolation operator $\IH$:
			$H^{-1}\|v-\IH v\|_{L^2(T)}+\|\nabla(v-\IH v)\|_{L^2(T)}\leq C\|\nabla v\|_{L^2(\omega_T)}.$
		\end{itemize}
\end{thm}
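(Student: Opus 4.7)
The plan is to exploit the orthogonal decomposition $V = V_H \oplus V_f$ with respect to the inner product inducing $\|\cdot\|$, together with a Poincaré-type inequality valid on $V_f$. The key observation is that $z - z_{\I}\in V_f$ by construction, since $\int_\Omega \psi_i (z-z_{\I})\,\dx = \int_\Omega \psi_i z\,\dx - (\int_\Omega \psi_i z\,\dx) = 0$ for every $i$, using $\int_\Omega \psi_i\phi_j\,\dx = \delta_{ij}$ from the definition of $V_i$. The Pythagoras identity \eqref{eqn:interpolation} is then just the orthogonality $V_H\perp V_f$ in this inner product.

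For the GRPS basis, with $\|\cdot\| = \|\cdot\|_a$ the energy norm, I would first invoke orthogonality to obtain $a(z_{\I}, z-z_{\I}) = 0$, so $a(z-z_{\I}, z-z_{\I}) = a(z, z-z_{\I}) = (\rho, z-z_{\I})_Y$ after testing the weak form \eqref{eqn:weakform} with $v = z - z_{\I}\in V$. Applying Cauchy--Schwarz and then the standard Poincaré inequality on $V_f$ (functions with vanishing patch averages satisfy $\|v\|_{L^2(\Omega)}\le C_P H \|\nabla v\|_{L^2(\Omega)}$) yields
\begin{equation*}
\|z-z_{\I}\|_a^2 \le \|\rho\|_{L^2(\Omega)}\, C_P H\, \|\nabla(z-z_{\I})\|_{L^2(\Omega)}.
\end{equation*}
The coercivity bound $\amin\|\nabla v\|_{L^2(\Omega)}^2 \le \|v\|_a^2$ then closes the estimate, giving $\|\nabla(z-z_{\I})\|_{L^2(\Omega)}\le (C_PH/\amin)\|\rho\|_{L^2(\Omega)}$.

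For the RPS basis, with $\|\cdot\| = \|\diiv a\nabla\cdot\|_{L^2(\Omega)}$, the same orthogonality gives $\|\diiv a\nabla(z-z_{\I})\|_{L^2(\Omega)}^2 = -\int_\Omega \rho\,\diiv a\nabla(z-z_{\I})\,\dx$, since $\diiv a\nabla z = -\rho$. Cauchy--Schwarz bounds this by $\|\rho\|_{L^2(\Omega)}\|\diiv a\nabla(z-z_{\I})\|_{L^2(\Omega)}$, and the higher-order Poincaré inequality on $V_f$ then converts this bound on the operator norm into a bound on $\|\nabla(z-z_{\I})\|_{L^2(\Omega)}$ with the factor $H$. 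The LOD case is analogous: $z-z_{\I}$ lies in the kernel of $\IH$, and one localizes using the elementwise approximation bound, sums the contributions, and invokes coercivity; the pairing $(\rho, z-z_{\I})_Y$ is handled by the $L^2$-part $H^{-1}\|v-\IH v\|_{L^2(T)}\le C\|\nabla v\|_{L^2(\omega_T)}$, with the $H$ factor coming out cleanly.

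The main obstacle is verifying that the relevant Poincaré-type inequality on $V_f$ holds with a constant depending only on $\Omega$, $H$, and the regularity of the partition (and crucially independent of the contrast $\kappa$). For the GRPS case this is classical; for RPS it requires the higher-order Poincaré estimate referenced in the statement, whose proof uses duality and the ellipticity of $a(x)$; for LOD it depends on the stability and locality of the quasi-interpolation $\IH$. Each of these is established in the cited references \cite{OwhadiZhangBerlyand:2014, OwhadiMultigrid:2017, Liu:2018, Malqvist:2014b}, so the argument reduces to quoting the appropriate inequality and then assembling the orthogonality/duality chain above.
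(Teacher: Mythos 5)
Your proposal is correct and is essentially the argument that the paper defers to its references for: the paper itself does not supply a proof of this theorem, it only lists the key Poincar\'e--type ingredient per construction and cites \cite{Liu:2018,OwhadiZhangBerlyand:2014,OwhadiMultigrid:2017}. Your chain --- observe $z-z_{\I}\in V_f$, use $V_H\perp V_f$ to get $\langle z-z_{\I}, z-z_{\I}\rangle = \langle z, z-z_{\I}\rangle$, rewrite the right-hand side via the PDE, apply Cauchy--Schwarz, and close with the relevant Poincar\'e inequality and coercivity --- is exactly the standard proof in those references, and the constants you obtain ($C_{\I}\sim C_P/\amin$ for GRPS) are consistent with how $C_{\I}$ is subsequently used in Lemma~\ref{lem:yuH-yH}.

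=== END OF REVIEW ===
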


Note that in the variational formulation \eqref{eqn:phi}, the minimization is done for functions defined on whole $\Omega$, and we call the corresponding basis global basis.

\begin{thm}
For the finite element solution $z_H\in V_H$, where $V_H$ is the space of global basis. We have
    	\begin{equation}
    		\|z-z_H\|_{H^1_0(\Omega)}\leq C^g_{\NH}H\|\rho\|_{L^2(\Omega)}.
   		\end{equation}
\end{thm}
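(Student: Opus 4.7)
The plan is to follow a standard C\'ea-type argument, exploiting the fact that $z_H\in V_H$ is the Galerkin projection of $z$ with respect to the energy inner product $a(\cdot,\cdot)$, and then replacing the best approximation by the specific interpolant $z_{\I}\in V_H$ introduced before Theorem~\ref{rpsin}, whose error is already controlled.

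First I would write down Galerkin orthogonality: since $z$ solves $a(z,w)=(\rho,w)$ for all $w\in V$ and $z_H\in V_H\subset V$ solves the same equation restricted to test functions in $V_H$, subtraction gives $a(z-z_H,v_H)=0$ for every $v_H\in V_H$. From this, the standard one-line manipulation $\|z-z_H\|_a^2 = a(z-z_H,z-v_H)\le \|z-z_H\|_a\|z-v_H\|_a$ yields C\'ea's inequality $\|z-z_H\|_a\le \|z-v_H\|_a$ for any $v_H\in V_H$. I would then specialize to $v_H=z_{\I}$, the interpolant of $z$ in $V_H$ defined via the observables $\int_\Omega \psi_i z\,\dx$.

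Next I would pass from the energy norm to the $H^1_0$ norm using the uniform ellipticity assumption (A1), which gives the two-sided equivalence $\sqrt{\amin}\,\|\nabla w\|_{L^2(\Omega)}\le \|w\|_a \le \sqrt{\amax}\,\|\nabla w\|_{L^2(\Omega)}$ for all $w\in V$. Applying the lower bound on the left and the upper bound on the right of the C\'ea estimate gives
\begin{equation*}
\|\nabla(z-z_H)\|_{L^2(\Omega)} \le \sqrt{\kappa}\,\|\nabla(z-z_{\I})\|_{L^2(\Omega)},
\end{equation*}
with $\kappa=\amax/\amin$ as in (A1). Finally, invoking the optimal approximation property of $V_H$ from Theorem~\ref{rpsin}, namely $\|\nabla(z-z_{\I})\|_{L^2(\Omega)}\le C_{\I} H\|\rho\|_{L^2(\Omega)}$, I obtain the desired bound with constant $C^g_{\NH}=\sqrt{\kappa}\,C_{\I}$.

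There is no real obstacle here; the result is essentially C\'ea's lemma combined with the already-established interpolation estimate. The only point that deserves a brief remark is that $z_{\I}$ is well defined for $\rho\in L^2(\Omega)$, since the observables $\int_\Omega \psi_i z\,\dx$ make sense for $z\in V\subset L^2(\Omega)$ and each of the admissible choices of $\psi_i$ listed after \eqref{eqn:condexp}. The identity \eqref{eqn:interpolation} could alternatively be used to shortcut part of the argument if one works throughout in the norm $\|\cdot\|$ used to define the GRPS basis, but the route through C\'ea plus uniform ellipticity is the cleanest and makes the dependence of $C^g_{\NH}$ on the contrast $\kappa$ transparent.
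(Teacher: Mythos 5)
Your proof is correct. The paper does not give an inline proof of this theorem (it defers to the references \cite{Liu:2018,OwhadiZhangBerlyand:2014,OwhadiMultigrid:2017}), but the identical C\'ea-plus-interpolation argument is exactly what the paper deploys later in the proof of Lemma~\ref{lem:yuH-yH}, where the chain $a(y(u_H)-y_H,\,y(u_H)-y_H)\le a(y(u_H)-y(u_H)_{\I},\,y(u_H)-y(u_H)_{\I})\le \amax C_{\I}^2H^2\|f+Bu_H\|^2$ is just your Galerkin orthogonality, norm equivalence and Theorem~\ref{rpsin} in that setting, so your approach coincides with the paper's; the only cosmetic difference is that the paper tracks a Poincar\'e factor $(1+d/\pi)$ because it bounds the full $H^1$ norm rather than the $H^1_0$ seminorm, which yields $C^g_{\NH}=\bigl[\tfrac{\amax}{\amin}(1+d/\pi)\bigr]^{1/2}C_{\I}$ in place of your $\sqrt{\kappa}\,C_{\I}$.
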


Of course, it is not preferable to use global basis in practical computation. The good thing is, the global basis has the following exponential decay property which can be proved by using Cacciopoli like argument for harmonic functions \cite{Malqvist:2014b,OwhadiMultigrid:2017}.

\begin{thm}
We have the following exponential decay property of global basis.
			\begin{displaymath}	
					\int_{\Omega\cap(B(x_i, r))^c}\nabla\phi_i^t a\nabla\phi_i\leq \exp(1-\frac{r}{CH})\int_\Omega \nabla\phi_i^t a\nabla\phi_i.
			\end{displaymath}
\end{thm}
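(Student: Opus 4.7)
The plan is to exploit the variational characterization of $\phi_i$ from \eqref{eqn:phi}: since $\phi_i$ minimizes $\|\cdot\|_a$ over the affine space $V_i$, its Euler--Lagrange condition is $a$-orthogonality to the fine space, $a(\phi_i,v)=0$ for every $v\in V_f$. This makes $\phi_i$ behave like an $a$-harmonic function away from the support of the measurement defining it, and the decay follows from a Caccioppoli-type iteration in the spirit of the LOD analysis in \cite{Malqvist:2014b,OwhadiMultigrid:2017}.

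First I would establish a one-step Caccioppoli estimate. Fix $r$ larger than a fixed multiple of $H$, and pick a smooth cutoff $\eta$ with $\eta\equiv 0$ on $B(x_i,r-CH)$, $\eta\equiv 1$ on $\Omega\setminus B(x_i,r)$, and $|\nabla\eta|\leq C/H$. The natural test function $\eta\phi_i$ does not lie in $V_f$, but $w:=\eta\phi_i-\IH(\eta\phi_i)$ does, since the interpolation operator associated with \eqref{eqn:interpolation} preserves the measurements $\int\psi_j(\cdot)\,\dx$. Substituting $w$ into $a(\phi_i,w)=0$ gives $a(\phi_i,\eta\phi_i)=a(\phi_i,\IH(\eta\phi_i))$. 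Expanding the left-hand side by the product rule, bounding $\IH(\eta\phi_i)$ on $\mathrm{supp}\,\nabla\eta$ by the local approximation/stability of $\IH$ (the localized form of the Poincar\'e-type inequality of Theorem~\ref{rpsin}), and applying Cauchy--Schwarz and Young's inequality, one obtains
\begin{equation*}
\int_{\Omega\setminus B(x_i,r)}\nabla\phi_i^t a\nabla\phi_i
\;\leq\; C\int_{B(x_i,r)\setminus B(x_i,r-CH)}\nabla\phi_i^t a\nabla\phi_i.
\end{equation*}
Setting $E(r):=\int_{\Omega\setminus B(x_i,r)}\nabla\phi_i^t a\nabla\phi_i$ and noting that the buffer annulus equals the set difference between $\Omega\setminus B(x_i,r-CH)$ and $\Omega\setminus B(x_i,r)$, this rewrites as $E(r)\leq C(E(r-CH)-E(r))$, i.e.\ $E(r)\leq\tfrac{C}{C+1}E(r-CH)$. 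Iterating $k=\lfloor r/(CH)\rfloor$ times and absorbing the prefactor into $\exp(1)$ produces
\begin{equation*}
E(r)\;\leq\;\exp\!\Bigl(1-\tfrac{r}{CH}\Bigr)\int_\Omega\nabla\phi_i^t a\nabla\phi_i,
\end{equation*}
which is the claim.

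The main obstacle is Step~1: producing the test function $w\in V_f$ that simultaneously (i) equals $\phi_i$ outside $B(x_i,r)$ so the left side of the Caccioppoli estimate recovers the full energy tail, and (ii) admits a sharp local energy bound on the buffer annulus that is uniform in the contrast $\kappa=\amax/\amin$. For the LOD/quasi-interpolation flavor of $\IH$ this is immediate from Clement--Oswald-type estimates. For the GRPS choice with characteristic-function measurements, localization of $\IH$ is more delicate and requires combining the Poincar\'e inequality on $V_f$ from Theorem~\ref{rpsin} with an inverse estimate on $V_H$ restricted to coarse patches; this is precisely the technical content borrowed from \cite{Liu:2018,OwhadiMultigrid:2017}.
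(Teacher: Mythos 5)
Your overall Caccioppoli iteration scheme --- exploit the $a$-orthogonality $a(\phi_i,v)=0$ for $v\in V_f$, introduce a cutoff $\eta$, correct $\eta\phi_i$ back into $V_f$, derive the one-step inequality $E(r)\le C\bigl(E(r-CH)-E(r)\bigr)$, and iterate --- is exactly what underlies the decay proofs in \cite{Malqvist:2014b,OwhadiMultigrid:2017}, and since the paper itself gives no argument but simply points to those references, your structure is the right one. The final calculus (iterating $k\approx r/(CH)$ times and absorbing constants into $e^1$) is also fine.

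The genuine gap is in the choice of correction $\IH(\eta\phi_i)$. The operator $\IH z=\sum_k\bigl(\int_\Omega\psi_k z\bigr)\phi_k$ maps into $V_H=\spn\{\phi_k\}$, and the global $\phi_k$ have full support in $\Omega$ --- precisely the fact the theorem is trying to quantify. Hence
$a(\phi_i,\IH(\eta\phi_i))=\sum_{k\in A}c_k\,a(\phi_i,\phi_k)$
with $A$ the buffer indices, and each $a(\phi_i,\phi_k)$ is a full stiffness-matrix entry with no a priori locality. Without already knowing the decay, the only unconditional estimate is
$|a(\phi_i,\IH(\eta\phi_i))|\le\|\phi_i\|_a\,\|\IH(\eta\phi_i)\|_a\lesssim E(0)^{1/2}E(r-CH)^{1/2}$ (using \eqref{eqn:interpolation}), which injects the total energy $E(0)$ into every Caccioppoli step and destroys the geometric contraction; the argument becomes circular. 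Your proposed remedy --- an inverse estimate on $V_H$ restricted to coarse patches --- does not fix this, because $V_H$ functions are not supported on coarse patches. What the cited proofs actually do is replace $\IH(\eta\phi_i)$ by a correction built from a \emph{local biorthogonal dual system}: functions $b_j$ with $\int_\Omega\psi_k b_j=\delta_{jk}$, $\operatorname{supp}b_j$ inside a one-layer patch around $\operatorname{supp}\psi_j$, and $\|b_j\|_a$ controlled by a local stability/inverse bound (for LOD a bubble, for GRPS/RPS a locally supported weighted characteristic or nodal function). Putting $w=\eta\phi_i-\sum_{j\in A}\bigl(\int_\Omega\psi_j\eta\phi_i\bigr)b_j\in V_f$ makes $a(\phi_i,\eta\phi_i)=a\bigl(\phi_i,\sum_{j\in A}c_jb_j\bigr)$ a genuinely local pairing supported near the annulus; combined with the Poincar\'e inequality on the annulus (valid because $\int_\Omega\psi_j\phi_i=0$ for the $j\ne i$ located there), this absorbs into $E(r-CH)-E(r)$ and closes the one-step estimate as you intended.
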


The exponential decay property opens an avenue for the local approximation of global basis. Let $\Omega_i\subset \Omega$, introduce
\begin{displaymath}
	V^{\loc}_i:=\{\phi\in V| \int_{\Omega_i} \phi(x)\psi_j(x)\dx = \delta_{i,j}, j=1,\dots ,N \}.
\end{displaymath}

The local basis is given by the solution of the following constrained minimization problem which is strictly convex and has a unique minimizer $\phi^{\loc}_i\in V^{\loc}_i$,
		\begin{equation}
			\begin{cases}
				\text{Minimize }\|\phi\|_{\Omega_i}^2\\
				\text{Subject to }\phi\in V^{\loc}_i
			\end{cases}
			\label{eqn:philoc}
		\end{equation}	

We have the following properties of the localized basis.
\begin{thm}\label{thm:philoci} [Truncation error for the localized basis]: If ${\rm diam} (\Omega_i:={\rm supp}(\phi^{\rm loc}_i)) = r$, then
			\begin{equation}	
				\|\phi_i - \phi^{\rm loc}_i\|_{a}\leq C \exp(-\frac{r}{2lH}).
			\end{equation}
	
\end{thm}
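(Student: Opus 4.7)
The strategy is to exploit the two variational characterizations together with the exponential decay from the preceding theorem: the global basis $\phi_i$ is already essentially supported in a region only slightly smaller than $\Omega_i$, so a suitable truncation of $\phi_i$, with a small correction restoring the interpolation constraints, serves as a quasi-optimal candidate in $V^{\loc}_i$. By the minimality of $\phi^{\loc}_i$ and a Galerkin-type orthogonality, the distance $\|\phi_i - \phi^{\loc}_i\|_a$ is controlled by the error of this candidate, which decays exponentially.

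First I would fix a smooth cutoff $\eta$ equal to $1$ on an inner layer inside $\Omega_i$ at distance of order $lH$ from $\partial\Omega_i$, vanishing outside $\Omega_i$, and satisfying $\|\nabla\eta\|_{L^\infty}\leq C/(lH)$. The function $\eta\phi_i$ is then supported in $\Omega_i$ but generally fails the constraints $\int_{\Omega_i}\eta\phi_i\,\psi_j = \delta_{ij}$ for those indices $j$ whose $\psi_j$ straddles the transition layer $\{0<\eta<1\}$. I would then pick a linear combination $c = \sum_j \lambda_j \phi^{\loc}_j$ of nearby local basis functions such that $\tilde\phi := \eta\phi_i + c$ lies in $V^{\loc}_i$; the existence and smallness of $c$ rely on a uniform lower bound for the local Gram-type matrix $(\int \phi^{\loc}_k \psi_j)_{j,k}$, which holds provided $l$ exceeds the diameter (measured in coarse elements) of the supports of the $\psi_j$.

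Next, since $\tilde\phi$ is admissible in the local minimization \eqref{eqn:philoc}, the minimality of $\phi^{\loc}_i$ together with the orthogonality of constraint-affine minimizers yields the Pythagoras-type inequality $\|\phi_i - \phi^{\loc}_i\|_a \leq \|\phi_i - \tilde\phi\|_a$. I would then split
\[
\|\phi_i - \tilde\phi\|_a \leq \|(1-\eta)\phi_i\|_a + \|c\|_a,
\]
and bound each piece separately. For the first term, a Leibniz expansion of $\nabla((1-\eta)\phi_i)$ combined with a Caccioppoli-type argument in the transition layer converts the $L^2$-norm of $\phi_i$ there into an energy-norm control; the exponential decay theorem, applied on the annulus at distance of order $r/2$ from $x_i$, then gives a bound of the form $C\exp(-r/(2lH))\|\phi_i\|_a$. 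For the correction, $\|c\|_a$ is controlled via the inverse of the local Gram matrix by the size of the constraint residuals $\bigl|\int (1-\eta)\phi_i \,\psi_j\bigr|$, and these residuals are exponentially small of the same order by the pointwise/$L^2$ decay of $\phi_i$ inherited from the preceding theorem.

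The main obstacle lies in controlling the correction term $c$: one must establish a coercivity/inf-sup bound for the constraint map on $V^{\loc}_i$ that is uniform in $H$ and $r$. This is the step where the parameter $l$ enters the final exponent, since $l$ must be large enough to ensure that enough local basis functions $\phi^{\loc}_j$ with supports inside $\Omega_i$ are available to absorb the constraint residuals produced by the truncation. Once this uniform invertibility is in place, the remaining work is routine bookkeeping of the decay factor $\exp(1 - r/(CH))$ from the preceding theorem to recover the claimed exponent $r/(2lH)$.
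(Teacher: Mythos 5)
The paper does not give its own proof of this theorem; the text immediately before Theorem~3.2 says that the proofs of these GRPS properties can be found in \cite{Liu:2018,OwhadiZhangBerlyand:2014,OwhadiMultigrid:2017}. Your sketch reproduces the standard argument from those references: truncate $\phi_i$ with a cutoff $\eta$, restore the violated moment constraints with a small correction $c$ supported in $\Omega_i$, and use the variational characterization of both $\phi_i$ and $\phi^{\loc}_i$ to bound $\|\phi_i-\phi^{\loc}_i\|$ by the energy of the candidate $\tilde\phi=\eta\phi_i+c$. The double-Pythagoras step you invoke is valid and worth spelling out: any $\tilde\phi\in V^{\loc}_i$ with support in $\Omega_i$ also belongs to $V_i$, so $\langle\cdot,\cdot\rangle$-orthogonality of $\phi_i$ to the subspace parallel to $V_i$ gives $\|\phi^{\loc}_i-\phi_i\|^2=\|\phi^{\loc}_i\|^2-\|\phi_i\|^2\leq\|\tilde\phi\|^2-\|\phi_i\|^2=\|\tilde\phi-\phi_i\|^2$. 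You also correctly identify that the crux is the uniform (in $H$, $r$) invertibility of the local constraint Gram matrix, which is precisely the technical content of the cited lemmas. Note that the estimate and the Pythagoras argument live in whichever norm defines the variational problem \eqref{eqn:phi}; for the RPS choice this is $\|\diiv a\nabla\cdot\|_{L^2}$ rather than $\|\cdot\|_a$, and the $\|\cdot\|_a$ statement requires the higher-order Poincar\'e inequality used in Theorem~\ref{rpsin} to convert between them.

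One genuine flaw in the sketch: you propose building the correction as $c=\sum_j\lambda_j\phi^{\loc}_j$, but those are exactly the objects whose localization error is being estimated (circularity), and moreover their supports $\Omega_j$ generally stick out of $\Omega_i$, so $\eta\phi_i+c$ would not be supported in $\Omega_i$ and would leave the admissible class for \eqref{eqn:philoc}. In the references the corrector is constructed from auxiliary, explicitly built, compactly supported functions — one per measurement $\psi_j$ whose support meets the transition layer of $\eta$ — with prescribed moments and $H$-uniform energy; the invertibility of the associated moment matrix is proved independently, and this is where the number of layers $l$ enters. With that replacement your plan coincides with the proof in the cited works. (Separately, you and the paper both gloss over the fact that the stated exponent $r/(2lH)$ is dimensionless of order one when $r\sim 2lH$, the natural scaling of an $l$-layer patch; the references state the decay as $\exp(-cl)$ or $\exp(-cr/H)$ with $c$ independent of $l$, which is also what your bookkeeping would actually produce.)
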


and the convergence of the FEM with localized basis
 \begin{thm} \label{thm:localbasis} [Accuracy of FEM with Localized Basis]:
  If $\ds (B(x_i, CH\ln\frac{1}{H})\cap \Omega)\subset\Omega_i$, $z_H^{\loc}$ is the FEM solution in ${\rm span} \{\phi_i^{\loc}\}$, the space of localized basis, then
  \begin{equation}
  	\|z-z_H^{\loc}\|_{H^1_0(\Omega)}\leq C_{\NH}H\|\rho\|_{L^2(\Omega)}.
  \end{equation}
 \end{thm}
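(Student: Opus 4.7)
The plan is to combine the Galerkin best-approximation property with a careful accounting of the mismatch between the global and localized bases, using the exponential decay supplied by Theorem \ref{thm:philoci}. Since $z_H^{\loc}$ is the Galerkin solution in $\mathrm{span}\{\phi_i^{\loc}\}$ for the symmetric coercive form $a(\cdot,\cdot)$, C\'ea's lemma reduces the error to an approximation problem: $\|z - z_H^{\loc}\|_a \leq \|z - w\|_a$ for every $w$ in the localized space. I would choose the natural candidate $w := \sum_i c_i \phi_i^{\loc}$ with $c_i := \int_\Omega \psi_i z\,\dx$, so that $w$ and the global interpolant $\zin = \sum_i c_i \phi_i$ share the same coefficients and differ only through the substitution of local for global basis functions.

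A triangle inequality then splits the estimate as
\[
\|z - w\|_a \;\leq\; \|z - \zin\|_a \;+\; \Bigl\| \sum_i c_i (\phi_i - \phi_i^{\loc}) \Bigr\|_a .
\]
The first term is $O(H\|\rho\|)$ by Theorem \ref{rpsin}, upgraded to the $a$-norm via uniform ellipticity (A1). For the second term, the triangle inequality followed by Cauchy--Schwarz gives
\[
\Bigl\| \sum_i c_i (\phi_i - \phi_i^{\loc}) \Bigr\|_a \;\leq\; \Bigl(\sum_i c_i^2\Bigr)^{1/2} \Bigl(\sum_i \|\phi_i - \phi_i^{\loc}\|_a^2\Bigr)^{1/2} .
\]
Disjointness of the patches carrying the $\psi_i$, combined with $|\tau_i| \sim H^d$ and the a priori bound $\|z\|_{L^2(\Omega)} \leq C\|\rho\|_{L^2(\Omega)}$, yields $\sum_i c_i^2 \leq C H^d \|\rho\|^2$. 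Each basis discrepancy is controlled by Theorem \ref{thm:philoci}: by hypothesis the effective oversampling radius is $r \geq CH\ln(1/H)$, giving $\|\phi_i - \phi_i^{\loc}\|_a \leq C\exp(-r/(2lH)) \leq C H^{C/(2l)}$, and summing over the $N \sim H^{-d}$ basis functions yields $\sum_i \|\phi_i - \phi_i^{\loc}\|_a^2 \leq C H^{C/l - d}$.

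Multiplying the two factors, the second term is bounded by $C H^{d/2}\|\rho\| \cdot H^{C/(2l) - d/2} = C H^{C/(2l)} \|\rho\|$, which is absorbed into $O(H\|\rho\|)$ by choosing the oversampling constant $C$ in $r = CH\ln(1/H)$ large enough (any $C \geq 2l$ will do, up to the implicit constants). The main obstacle is precisely this bookkeeping: one must verify that the exponential decay of the localization error, evaluated at the logarithmic oversampling radius, is strong enough to overcome the $H^{-d/2}$ factor coming from the number of basis functions so that the overall bound remains $O(H)$. This is the structural reason why the oversampling radius must grow logarithmically in $H$ rather than merely polynomially. A secondary point is the $L^2$-stability $\sum_i c_i^2 \lesssim H^d \|z\|^2$, which is immediate for the patch-indicator choice of $\psi_i$ used here but would need to be re-examined for the other measurement options listed after Theorem \ref{rpsin}.
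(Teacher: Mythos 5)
The paper does not spell out a proof of this theorem; it is stated as a summary and the argument is delegated to the cited references (Owhadi--Zhang--Berlyand, M{\aa}lqvist--Peterseim/LOD, Owhadi's gamblet paper, and Liu et al.). Your proposal reconstructs exactly the standard argument used in that literature: Galerkin quasi-optimality (equality in the energy norm for the symmetric coercive form), a comparison element $w=\sum_i c_i\phi_i^{\loc}$ sharing the global interpolant's coefficients, a triangle inequality splitting into the interpolation error (Theorem~\ref{rpsin}, upgraded to $\|\cdot\|_a$ by (A1)) and the basis-truncation error $\sum_i c_i(\phi_i-\phi_i^{\loc})$, and the observation that exponential decay at a radius $\sim H\ln(1/H)$ dominates any fixed polynomial factor in $H$. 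The structural remark on \emph{why} logarithmic oversampling is the right scale is exactly the reason emphasized in the cited works.

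Two small points of comparison with the way the references usually close the bound. First, your treatment of $\bigl\|\sum_i c_i(\phi_i-\phi_i^{\loc})\bigr\|_a$ via triangle inequality followed by Cauchy--Schwarz over the index set pays a factor $N^{1/2}\sim H^{-d/2}$; the sharper and more customary estimate exploits the bounded overlap of the truncated supports $\Omega_i$ (each point of $\Omega$ lies in at most $O\bigl((r/H)^d\bigr)=O(\log^d(1/H))$ patches), yielding
\[
\Bigl\|\sum_i c_i(\phi_i-\phi_i^{\loc})\Bigr\|_a^2 \;\leq\; C\,\log^d(1/H)\,\sum_i c_i^2\,\|\phi_i-\phi_i^{\loc}\|_a^2,
\]
which replaces your $H^{-d/2}$ by a logarithm and therefore lets the oversampling constant be taken essentially uniformly rather than forced large enough to absorb a $d/2$ power of $H$. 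Both versions give the stated $O(H)$ rate, but the overlap argument is the one that makes the constant in $r=CH\ln(1/H)$ dimension-robust. Second, as you yourself flag, the bound $\sum_i c_i^2\lesssim H^d\|\rho\|^2$ hinges on the $\psi_i$ being unnormalized indicators of disjoint coarse cells; for the Dirac measurements in the RPS case or the quasi-interpolation functionals in the LOD case, both $\sum_i c_i^2$ and the constant in Theorem~\ref{thm:philoci} pick up compensating $H$-powers (the product $c_i\phi_i$ is invariant under rescaling $\psi_i$), so the conclusion is unchanged but the intermediate bookkeeping differs. With these caveats acknowledged, your argument is sound and faithfully reproduces the proof strategy the paper is invoking by citation.
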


\section{Convergence Analysis for Optimal Control Problem}
\label{sec:convergence}
In the rest of the paper, we use the localized numerical homogenization basis $V_{H}:=\spn\{\phi_i^{loc}\}_{i\in \mathcal{N}}$ which satisfies the Theorem \ref{thm:localbasis}.

Consider the following optimal control problem
\begin{equation}
\begin{array}{cc}
a(y_H,v_H)=(f+Bu_H,v_H), &\forall v_H\in V_H,\\
a( p_H, q_H)=(g'(y_H),q_H), &\forall q_H\in V_H,\\
(h'(u_H)+B^*p_H,\tilde{u}_H-u_H)_U\geq 0, &\forall \tilde{u}_H\in K_H \subset U_H.
\end{array}
\label{eqn:ocp-opt-H}
\end{equation}
where $U_H$ is the piecewise constant finite element space over $\T^H_U$, and $K_H:= K\cap U_H$.

\subsection{A priori error estimates}

Fixed the control approximation $u_H\in V_H$, define the auxiliary
solutions
 $(y(u_H),p(u_H))\in V\times V$
which are the solutions of the following equations:
\begin{equation}
\begin{array}{cc}
a( y(u_H),v)=(f+Bu_H,v), &\forall v\in V,\\
a(p(u_H), q)=(g'(y(u_H)),q), & \forall q\in V.
\end{array}
\label{eqn:auxiliary}
\end{equation}

We have the following lemma for the accuracy of auxiliary solutions.

\begin{lem}Let $y(u_H)$ and $p(u_H)$ be the solutions of \eqref{eqn:auxiliary}, $y$ and $p$ be the finite element solutions of \eqref{eqn:ocp-opt} in $V$. It holds true that
\begin{displaymath}
\|y-y(u_H)\|_{1,\Omega}+\|p-p(u_H)\|_{1,\Omega} \leq  C\|u-u_H\|,
\end{displaymath}
where $C$ depends on $\amin$, $\amax$, $d$, $C_B$, $L_g$.
\label{lem:y-yuH}
\end{lem}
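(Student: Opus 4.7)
The plan is to derive two standard energy estimates in sequence: first bound $\|y-y(u_H)\|_{1,\Omega}$ in terms of $\|u-u_H\|_U$ using continuity of $B$ and the coercivity of $a(\cdot,\cdot)$, then bound $\|p-p(u_H)\|_{1,\Omega}$ in terms of $\|y-y(u_H)\|_{0,\Omega}$ using the Lipschitz continuity of $g'$, and finally chain the two.

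First I would subtract the state equation in \eqref{eqn:ocp-opt} from the first equation in \eqref{eqn:auxiliary} to obtain
\begin{displaymath}
a(y-y(u_H),v) = (B(u-u_H),v)_Y, \quad \forall v\in V.
\end{displaymath}
Choosing $v=y-y(u_H)\in V$, assumption (A1) gives $\amin\|\nabla(y-y(u_H))\|^2 \le a(y-y(u_H),y-y(u_H))$, while the right-hand side is controlled by $\|B(u-u_H)\|_Y\|y-y(u_H)\|_Y \le C_B\|u-u_H\|_U \|y-y(u_H)\|_Y$. Combining with the Poincar\'e inequality on $\Omega$ (to pass between $\|\cdot\|_Y$ and $\|\nabla\cdot\|_Y$) then yields
\begin{displaymath}
\|y-y(u_H)\|_{1,\Omega}\le C_1\|u-u_H\|_U,
\end{displaymath}
with $C_1$ depending only on $\amin$, the Poincar\'e constant (hence on $d$ and $\diam(\Omega)$), and $C_B$.

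Next I would subtract the co-state equation in \eqref{eqn:ocp-opt} from the second equation in \eqref{eqn:auxiliary} to get
\begin{displaymath}
a(p-p(u_H),q) = (g'(y)-g'(y(u_H)),q)_Y, \quad \forall q\in V.
\end{displaymath}
Testing with $q=p-p(u_H)$, the coercivity of $a(\cdot,\cdot)$ gives the left-hand lower bound $\amin\|\nabla(p-p(u_H))\|^2$, and the Lipschitz assumption \eqref{eqn:Lg} together with Cauchy--Schwarz gives the upper bound $L_g\|y-y(u_H)\|_Y\|p-p(u_H)\|_Y$. Again applying Poincar\'e and dividing through yields
\begin{displaymath}
\|p-p(u_H)\|_{1,\Omega}\le C_2\|y-y(u_H)\|_{0,\Omega}\le C_2\|y-y(u_H)\|_{1,\Omega},
\end{displaymath}
with $C_2$ depending on $\amin$, $L_g$, $d$, $\diam(\Omega)$. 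Chaining this with the previous estimate produces the desired bound with $C=C_1+C_1 C_2$, which depends only on the quantities listed in the statement.

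There is no serious obstacle here; the only care needed is in tracking that the constant $C$ is independent of $u_H$ and of the mesh, and in justifying that $y(u_H)$ and $p(u_H)$ are well-defined elements of $V$ (which follows from Lax--Milgram under (A1) and $f+Bu_H\in L^2(\Omega)$, $g'(y(u_H))\in Y$). The structure is entirely standard energy subtraction, with the two estimates decoupling naturally because $y(u_H)$ depends linearly on $u_H$ while the dependence of $p(u_H)$ on $y(u_H)$ is controlled by the Lipschitz bound on $g'$.
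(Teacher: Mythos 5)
Your proof is correct and follows the same route as the paper: subtract the equations in \eqref{eqn:ocp-opt} and \eqref{eqn:auxiliary}, test each error equation with the error itself, and invoke coercivity, continuity of $B$, the Lipschitz bound on $g'$, and Poincar\'e. The paper simply compresses the two energy estimates to one-line implications, whereas you spell out the intermediate bounds, so the arguments are the same up to level of detail.
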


\begin{proof}
By \eqref{eqn:ocp-opt} and \eqref{eqn:auxiliary}, we have
\begin{displaymath}
\begin{array}{cc}
a( y- y(u_H),v)=(Bu-Bu_H,v), &\forall v\in V,\\
a(p - p(u_H), q)=(g'(y)-g'(y(u_H)),q), & \forall q\in V.
\end{array}
\end{displaymath}

which implies that
\begin{displaymath}
\begin{array}{cll}
\|y- y(u_H)\|_1& \leq C\|u-u_H\|, & \\
\|p - p(u_H)\|_1& \leq C\|y-y(u_H)\| & \leq C \|u - u_H\|.
\end{array}
\end{displaymath}

\end{proof}

The following lemma bounds the accuracy of the approximate solution of
\eqref{eqn:ocp-opt-H}.

\begin{lem}
Let $y(u_H)$ and $p(u_H)$ be the solutions of \eqref{eqn:auxiliary}, $y_H$ and $p_H$ be the finite element solutions of \eqref{eqn:ocp-opt-H} in $V_H$. It holds true that
\begin{equation}
\|y(u_H)-y_H\|_{1}\leq CH\|f+Bu_H\|,\label{eqn:error_yuH}
\end{equation}
where $\ds C = [\frac{\amax} {\amin} (1+d/\pi)]^{1/2} C_{\I}$. And,
\begin{equation}
\|p(u_H)-p_H\|_1\leq CH (\|f+Bu_H\| + \|g'(y_H)\|).\label{eqn:error_puH}
\end{equation}
where $\ds C = \max\{(\frac{\amax} {\amin} (1+d/\pi)\sqrt{2}+\frac{1}{\sqrt{2}})C_{NH},\frac{L_g}{\amin}(1+d/\pi)\sqrt{2}\}$, $d=\diam (\Omega)$.
\label{lem:yuH-yH}
\end{lem}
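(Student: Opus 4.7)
The bound \eqref{eqn:error_yuH} on $\|y(u_H)-y_H\|_1$ is essentially a direct Galerkin argument, since $y(u_H)$ and $y_H$ solve the \emph{same} elliptic problem with the identical right--hand side $f+Bu_H$, posed on $V$ and $V_H$ respectively. First I would invoke C\'ea's lemma in the energy norm $\|\cdot\|_a$ to reduce the Galerkin error to the best approximation, then bound that best approximation from above by the GRPS interpolant $y(u_H)_{\I}\in V_H$. Applying the optimal approximation property from Theorem \ref{rpsin} gives $\|\nabla(y(u_H)-y(u_H)_{\I})\|\le C_{\I} H \|f+Bu_H\|$, and the passage from the energy norm back to $H^1$ is closed by using $\sqrt{\amin}\|\nabla\cdot\|\le\|\cdot\|_a\le\sqrt{\amax}\|\nabla\cdot\|$ together with the Poincar\'e inequality on $H^1_0(\Omega)$, whose constant is controlled by $d=\diam(\Omega)$. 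This is what produces the stated prefactor $[\amax/\amin\,(1+d/\pi)]^{1/2}\,C_{\I}$.

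The co--state bound \eqref{eqn:error_puH} is the main conceptual work, because $p(u_H)$ and $p_H$ solve elliptic problems with \emph{different} right--hand sides, namely $g'(y(u_H))$ and $g'(y_H)$. My plan is to introduce the auxiliary function $\tilde p\in V$ defined by $a(\tilde p,q)=(g'(y_H),q)$ for all $q\in V$, and then split the error by the triangle inequality as $\|p(u_H)-p_H\|_1\le \|p(u_H)-\tilde p\|_1+\|\tilde p-p_H\|_1$. The second summand is pure Galerkin error: since $p_H$ is the finite element approximation of $\tilde p$ in $V_H$ with the \emph{same} source $g'(y_H)$, Theorem \ref{thm:localbasis} yields $\|\tilde p-p_H\|_1\le C_{\NH} H\|g'(y_H)\|$, which produces the $\|g'(y_H)\|$ piece of the bound.

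For the first summand, subtracting the equations for $p(u_H)$ and $\tilde p$ shows that $p(u_H)-\tilde p\in V$ satisfies $a(p(u_H)-\tilde p,q)=(g'(y(u_H))-g'(y_H),q)$ for all $q\in V$. Testing against $p(u_H)-\tilde p$ itself and using coercivity (A1), the Lipschitz continuity of $g'$ from \eqref{eqn:Lg}, Cauchy--Schwarz and the Poincar\'e inequality, I obtain $\|p(u_H)-\tilde p\|_1\le C(L_g/\amin)\,\|y(u_H)-y_H\|$. Substituting the already--proved state error bound through $\|y(u_H)-y_H\|\le (d/\pi)\|\nabla(y(u_H)-y_H)\|\le C H\|f+Bu_H\|$ converts this into the $\|f+Bu_H\|$ piece, and summing the two contributions gives \eqref{eqn:error_puH}.

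The main obstacle is not conceptual but bookkeeping: tracking carefully how $\amin,\amax,L_g$, the Poincar\'e constant on $\Omega$, and the two approximation constants $C_{\I}$ (from Theorem \ref{rpsin}) and $C_{\NH}$ (from Theorem \ref{thm:localbasis}) propagate through C\'ea's lemma and the auxiliary--problem splitting, in order to produce exactly the explicit constants displayed in the statement.
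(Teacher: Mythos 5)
Your proof of \eqref{eqn:error_yuH} matches the paper's argument: both apply C\'ea's lemma in the energy norm, majorize the best approximation by the GRPS interpolant $y(u_H)_{\I}$, and invoke Theorem~\ref{rpsin} together with coercivity/continuity of $a(\cdot,\cdot)$ and Poincar\'e to convert back to the $H^1$ norm. For \eqref{eqn:error_puH}, however, you take a genuinely different route. The paper works directly with the Galerkin error in the bilinear form: it splits $a(p(u_H)-p_H,\,p(u_H)-p_H)$ into $a(p(u_H)-p(u_H)_{\I},\,p(u_H)-p_H)$ plus $a(p(u_H)_{\I}-p_H,\,p(u_H)-p_H)$, replaces the second piece by $(g'(y(u_H))-g'(y_H),\,p(u_H)_{\I}-p_H)$ using the state and discrete co-state equations, and then absorbs the $\|p(u_H)-p_H\|_1$ contribution with a Young's inequality parameter $\ve=\tfrac12\amin/(1+d/\pi)$. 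You instead introduce the intermediate solution $\tilde p\in V$ of $a(\tilde p,q)=(g'(y_H),q)$, split $p(u_H)-p_H=(p(u_H)-\tilde p)+(\tilde p-p_H)$ by the triangle inequality, handle $\tilde p - p_H$ as a pure Galerkin error (Theorem~\ref{thm:localbasis} with source $g'(y_H)$), and bound $p(u_H)-\tilde p$ by a coercivity/stability argument for the continuous problem with right-hand side $g'(y(u_H))-g'(y_H)$. Your decomposition is cleaner and more modular — it separates the ``wrong data'' error (Lipschitz stability of the continuous operator) from the ``wrong space'' error (numerical homogenization approximation) without any Young's-inequality juggling — at the cost of a second application of the Poincar\'e constant through the triangle inequality, so the resulting explicit constant will not coincide exactly with the one displayed in the lemma (which you acknowledge and correctly flag as bookkeeping rather than a conceptual issue). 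Both arguments are sound and yield the same $O(H)$ rate with the same dependence on $\amin,\amax,L_g,C_{\I},C_{\NH}$.
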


\begin{proof}
The first inequality \eqref{eqn:error_yuH} is due to usual finite element estimate, Poincar\'{e}  inequality, and \eqref{eqn:interpolation},
\begin{align*}
	\frac{\amin}{1+d/\pi}\|y(u_H)-y_H\|_1^2 & \leq a(y(u_H) - y_H, y(u_H) - y_H) \\
	& \leq a(y(u_H) - y(u_H)_{\I}, y(u_H) - y(u_H)_{\I}) \\
	 							 & \leq \amax C_{\I}^2 H^2 \|f+Bu_H\|^2.
\end{align*}
where $y(u_H)_{\I}$ is the interpolation of $y(u_H)$ in $V_H$.

By Poincar\'{e}  inequality, Lipschitz property \eqref{eqn:Lg} of $g'(\cdot)$, \eqref{eqn:ocp-opt-H}, \eqref{eqn:auxiliary}, we obtain
\begin{align*}
&\frac{\amin}{1+d/\pi}\|p(u_H)-p_H\|_1^2 \\
 \leq & a(p(u_H)-p_H,p(u_H)-p_H)\\
=&a(p(u_H)-p(u_H)_{\I},p(u_H)-p_H)+a(p(u_H)_{\I}- p_H,p(u_H)-p_H)\\
=&a(p(u_H)-p(u_H)_{\I},p(u_H)-p_H)+(g'(y(u_H))-g'(y_H), p(u_H)_{\I}- p_H)\\
\leq& (\frac{\amax^2}{2\ve}+\frac{\ve}{2}) \|p(u_H)-p(u_H)_{\I}\|_1^2+\frac{L_g^2}{2\ve}\|y(u_H)-y_H\|^2+\ve\|p(u_H)-p_H\|_1^2,
\end{align*}
where $(p(u_H))_{\I}$ is the interpolation of $p(u_H)$ in $V_H$.

Take $\ds\ve = \frac12\frac{\amin}{1+d/\pi}$, we have
\begin{align*}
\|p(u_H)-p_H\|_1^2& \leq C (\|p(u_H)-p(u_H)_{\I}\|_1^2+ \|y(u_H)-y_H\|^2)\\
					  & \leq C H^2\|g'(y_H)\|^2 + CH^2\|f+Bu_H\|^2
\end{align*}
where the constants $C$ depends on $\amin$, $\amax$, $d$, $L_g$, $C_I$ and $C_{\NH}$, but not on $H$.

\end{proof}

Define the averaging projection(\cite{liu2008adaptive}) $\Pi_{H}:  K\rightarrow K_{H}$ as,
\begin{eqnarray}
\Pi_{H} v|_{\tau}=\frac{1}{|\tau|}\int_{\tau_U}v, \forall \tau\in\mathcal{T}_{h_U}.
\end{eqnarray}
Note that $K_{H}\subset K$. We have that if $v\in H^1(\Omega_U)$ (see e.g.\cite{liu2008adaptive,ciarlet2002finite}),
\begin{eqnarray}
\|v-\Pi_{H} v\|_{0,\Omega_U}\leq C H_U|v|_{1,\Omega_U}\label{pi}.
\end{eqnarray}
Moreover, if $u\in H^1(\Omega_U)$, $p\in H^1(\Omega)$, assume that $h'(u)$
is Lipschitz continuous,  we have
\begin{eqnarray}
(h'(u)+B^*p,\Pi_{H}u-u)_U&=&\sum_{\tau_U\in \mathcal{T}^H_{U}}\int_{\tau_U}(h'(u)+B^*p-\Pi_{H}(h'(u)+B^*p))(\Pi_{H}u-u)\nn\\
&\leq&\|h'(u)+B^*p-\Pi_{H}(h'(u)+B^*p)\|_{0,\Omega_U}\|\Pi_{H}u-u\|_{0,\Omega_U}\nn\\
&\leq&CH_U^2|h'(u)+B^*p|_{1,\Omega_U}|u|_{1,\Omega_U}\nn\\
&\leq& CH_U^2(|u|_{1,\Omega_U}^2+|p|_{1,\Omega}^2).\label{pi2}
\end{eqnarray}

\begin{lem}\label{lem:u-uH}
Let $(y,p,u)$ be the solution of equation (\ref{eqn:ocp-opt}), and
$(y_H,p_H, u_H)$ be the finite element solution of
(\ref{eqn:ocp-opt-H}). Assume that $u\in H^1(\Omega_U)$,
it holds true that
\begin{displaymath}
\|u-u_H\|\leq C(H_U(|u|_{1,\Omega_U}+|p|_{1})+H(\|f+Bu\|+\|g'(y)\|)),
\end{displaymath}
 where $C$ is a constant depending on $\amin$, $\amax$, $L_h$, $L_g$, $C_B$, $C_I$ and $C_{\NH}$,  but not on $H$ and $H_U$.
 \end{lem}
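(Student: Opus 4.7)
The plan is to use the two variational inequalities in \eqref{eqn:ocp-opt} and \eqref{eqn:ocp-opt-H} together with the uniform convexity of $h$ (assumption (A3)) to obtain a squared-norm lower bound on $\|u-u_H\|$, and then to split the resulting error terms through the auxiliary pair $(y(u_H),p(u_H))$ so that Lemmas \ref{lem:y-yuH} and \ref{lem:yuH-yH} together with the projection estimate \eqref{pi2} can be applied.

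First I would test the continuous variational inequality at $\tilde u = u_H\in K_H\subset K$, and the discrete one at $\tilde u_H = \Pi_H u\in K_H$, and add the two. After rearranging, the inequality becomes
\begin{displaymath}
(h'(u)-h'(u_H),u-u_H)_U + (B^*(p-p_H),u-u_H)_U \le (h'(u_H)+B^*p_H,\Pi_H u - u)_U.
\end{displaymath}
The left-hand side is bounded below by $M_h\|u-u_H\|^2$ via \eqref{hu}, so the task reduces to controlling the two terms
\begin{displaymath}
\mathrm{I}:=(B(u-u_H),p_H-p)_Y,\qquad \mathrm{II}:=(h'(u_H)+B^*p_H,\Pi_H u - u)_U.
\end{displaymath}

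For $\mathrm{I}$ I would split $p_H - p = (p_H - p(u_H)) + (p(u_H) - p)$. The key algebraic step is to recognise that by the state equations and co-state equations,
\begin{displaymath}
(B(u-u_H),p(u_H)-p)_Y = a(y(u_H)-y,\,p-p(u_H)) = -(g'(y)-g'(y(u_H)),\,y-y(u_H))_Y,
\end{displaymath}
which by the uniform convexity \eqref{gu} of $g$ is $\le -M_g\|y-y(u_H)\|^2 \le 0$ and can simply be dropped. The remaining piece $(B(u-u_H),p_H-p(u_H))_Y$ is handled by Cauchy--Schwarz, the bound $\|B\|\le C_B$, and Lemma \ref{lem:yuH-yH}, yielding a contribution of the form $C_B\|u-u_H\|\cdot C H(\|f+Bu_H\|+\|g'(y_H)\|)$, which I would finally replace by $\|f+Bu\|+\|g'(y)\|$ using $\|B(u-u_H)\|\le C_B\|u-u_H\|$ and the Lipschitz bound \eqref{eqn:Lg} on $g'$ together with Lemma \ref{lem:y-yuH}, absorbing the resulting $\|u-u_H\|^2$ term into the left-hand side via Young's inequality.

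For $\mathrm{II}$ I would add and subtract $(h'(u)+B^*p,\Pi_H u - u)_U$. The term involving the exact $(u,p)$ is bounded directly by \eqref{pi2} as $CH_U^2(|u|_{1,\Omega_U}^2+|p|_{1,\Omega}^2)$. The residual $(h'(u_H)-h'(u)+B^*(p_H-p),\Pi_H u - u)_U$ is estimated by $L_h\|u-u_H\|+C_B\|p-p_H\|$ multiplied by $\|\Pi_H u - u\|\le CH_U|u|_{1,\Omega_U}$, where $\|p-p_H\|$ is split once more via $p(u_H)$ and controlled by Lemma \ref{lem:y-yuH} plus Lemma \ref{lem:yuH-yH}; every cross term of the form $\|u-u_H\|\cdot(\text{small})$ is then absorbed with Young's inequality into the $M_h\|u-u_H\|^2$ on the left. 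Collecting everything and taking the square root gives the claimed bound.

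The main obstacle, as in standard a priori OCP analysis, is the algebraic manipulation that uses the state--co-state duality to show that the cross term $(B(u-u_H),p(u_H)-p)_Y$ has the favorable sign; this is what lets the numerical-homogenization $H$-factor from Lemma \ref{lem:yuH-yH} and the $H_U$-factor from \eqref{pi2} appear cleanly in the final estimate without being polluted by an $\|u-u_H\|$ term of size $O(1)$. The rest of the argument is careful bookkeeping to ensure that all constants depend only on $\amin,\amax,L_h,L_g,C_B,C_\I,C_\NH$ and not on $H$ or $H_U$.
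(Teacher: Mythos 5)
Your proposal is correct and follows essentially the same strategy as the paper's own proof: both rest on testing the continuous and discrete variational inequalities at $u_H$ and $\Pi_H u$ respectively, exploiting the state–co-state duality together with the uniform convexity of $g$ to show that $(B(u-u_H),p(u_H)-p)_Y\le 0$ so this cross term can be dropped, and then controlling the remaining terms with Lemmas \ref{lem:y-yuH}, \ref{lem:yuH-yH}, the projection estimate \eqref{pi2}, and Young's inequality to absorb the residual $\|u-u_H\|$ contributions. The only difference from the paper is cosmetic bookkeeping (the paper adds the $g$-convexity term before rewriting via duality, while you apply the variational inequalities first and then identify the favorably-signed term).
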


\begin{proof}
It follows from (\ref{eqn:ocp-opt}) that
\begin{eqnarray*}
(B^*p(u), u-v)_U&=&(p(u),B(u-v))=a(y(u)-y(v), p(u))\\
&=&(g'(y(u)), y(u)-y(v)).
\end{eqnarray*}
Similarly,
\begin{displaymath}
(B^*p(v), u-v)_U=(g'(y(v)), y(u)-y(v)).
\end{displaymath}
The convexities of $g$ and $h$ imply that,
\begin{eqnarray}
M_h\|u_H-u\|^2_{0,\Omega_U}&\leq& (h'(u_H)-h'(u),u_H-u)_U\nn\\
&\leq& (h'(u_H)-h'(u),u_H-u)_U+(g'(y(u_H))-g'(y(u)), y(u_H)-y(u))\nn\\
&=& (h'(u_H)+B^*p(u_H),u_H-u)_U-(h'(u)+B^*p(u),u_H-u)\label{eqn:convexity}.
\end{eqnarray}
Combining \eqref{eqn:convexity}, the optimality conditions \eqref{eqn:ocp-opt} and
\eqref{eqn:ocp-opt-H}, the estimate (\ref{pi2}), we have
\begin{eqnarray}
M_h\|u_H-u\|^2_{0,\Omega_U}&\leq&(h'(u_H)+B^*p_H,u_H-\Pi_{H}u)_U+(B^*p(u_H)-B^*p_H,u_H-\Pi_{H}u)_U\nn\\
&&+(h'(u_H)+B^*p(u_H),\Pi_{H}u-u)_U\nn\\
&\leq&(B^*p(u_H)-B^*p_H,u_H-u)_U+(B^*p(u_H)-B^*p_H,u-\Pi_{H}u)_U\nn\\
&&+(h'(u)+B^*p,\Pi_{H}u-u)_U+(h'(u_H)-h'(u),\Pi_{H}u-u)_U\nn\\
&&+(B^*p(u_H)-B^*p,\Pi_{H}u-u)_U\nn\\
&\leq& \frac12 C_B^2(1/\ve+1)\|p(u_H)-p_H\|^2+C H_U^2(|p|_{1,\Omega_U}^2+|u|_{1,\Omega_U}^2)+\nn\\
&&(\frac12+\frac{L_h^2+C_B^2}{2\ve})\|\Pi_{H}u-u\|^2+\ve\|u_H-u\|^2+\frac{\ve}{2}\|p(u_H)-p\|^2\label{eqn:uH}.
\end{eqnarray}
where $\ve$ is a arbitrarily positive constant.

By Lemma \ref{lem:y-yuH}, we have
\begin{equation}
\|p(u_H)-p\|\leq \|p(u_H)-p\|_1\leq C\|u_H-u\|.\label{fz1}
\end{equation}
Thus, if we choose $\ds\ve=\frac{M_h}{2+C^2}$, it follows from \eqref{pi}, \eqref{fz1} and \eqref{eqn:uH}  that,
\begin{eqnarray}
\|u_H-u\|^2_{0,\Omega_U}&\leq&  C\|p(u_H)-p_H\|^2+C H_U^2(|p|_{1,\Omega_U}^2+|u|_{1,\Omega_U}^2).\label{eqn:uh1}
\end{eqnarray}
Using  \eqref{eqn:error_yuH}, \eqref{eqn:error_puH},  Lemma \ref{lem:y-yuH}, and choosing $H$ sufficient small such that $CL_g^2H^2\leq 1$, we have
\begin{eqnarray}
\|p(u_H)-p_H\|_1^2&\leq& CH^2 (\|f+Bu_H\|^2 + \|g'(y_H)\|^2)\nn\\
&\leq& CH^2 (\|f+Bu\|^2+\|g'(y)\|^2)+ C(C_B^2+L_g^2)H^2\|u-u_H\|^2.
\label{eqn:puh}
\end{eqnarray}
Thus,  it follows from \eqref{eqn:uh1} and \eqref{eqn:puh} that,
\begin{eqnarray*}
\|u-u_H\|^2&\leq&C(H_U^2(|u|_{1,\Omega_U}^2+|p|_{1}^2)+H^2(\|f+Bu\|^2+\|g'(y)\|^2))\nn\\
&&+C(C_B^2+L_g^2)H^2\|u-u_H\|^2.
\end{eqnarray*}

By choosing $H$ sufficient small such that $\ds 1-C(C_B^2+L_g^2)H^2\geq \frac12$, we conclude the proof of the theorem.
\end{proof}

Combining Lemmas \ref{lem:y-yuH}, \ref{lem:yuH-yH}, \ref{lem:u-uH}, we have the following a priori error estimates.

\begin{thm}
Let $(y,p,u)$ be the solution of equation \eqref{eqn:ocp-opt}, and
$(y_H,p_H, u_H)$ be the finite element solution of \eqref{eqn:ocp-opt-H}. Assume that $u\in H^1(\Omega_U)$, then it holds true that
\begin{displaymath}
\|y-y_H\|_{1}+\|p-p_H\|_{1}+\|u-u_H\| \leq CH_U(|u|_1+|p|_1)+H(\|f\|+\|u\|+\|g'(y)\|).
\end{displaymath}
\label{thm:main}
\end{thm}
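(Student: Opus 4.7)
The plan is to combine the three preceding lemmas via two triangle inequalities, then control the data-dependent right hand sides in terms of the continuous solution only.

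First I would observe that the bound on the control error is essentially Lemma \ref{lem:u-uH} itself: since $\|f+Bu\|\leq \|f\|+C_B\|u\|$ by the boundedness of $B$, Lemma \ref{lem:u-uH} immediately yields
\begin{displaymath}
\|u-u_H\|\leq C\bigl(H_U(|u|_{1,\Omega_U}+|p|_1)+H(\|f\|+\|u\|+\|g'(y)\|)\bigr).
\end{displaymath}
This gives the $u$-part of the theorem and, crucially, provides a ready-made bound for $\|u-u_H\|$ that will be used below to close estimates on the state and co-state.

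Next I would estimate $\|y-y_H\|_1$ via the triangle inequality $\|y-y_H\|_1\leq \|y-y(u_H)\|_1+\|y(u_H)-y_H\|_1$. Lemma \ref{lem:y-yuH} handles the first term by $C\|u-u_H\|$. For the second term, Lemma \ref{lem:yuH-yH} gives $\|y(u_H)-y_H\|_1\leq CH\|f+Bu_H\|$, and I would rewrite the right hand side as
\begin{displaymath}
\|f+Bu_H\|\leq \|f+Bu\|+C_B\|u-u_H\|\leq \|f\|+C_B\|u\|+C_B\|u-u_H\|,
\end{displaymath}
so after using the control bound from the first step, the $H\|u-u_H\|$ contribution is absorbed into $H(\|f\|+\|u\|+\|g'(y)\|)+H_U(|u|_1+|p|_1)$ terms. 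An analogous triangle inequality $\|p-p_H\|_1\leq \|p-p(u_H)\|_1+\|p(u_H)-p_H\|_1$ combined with Lemma \ref{lem:y-yuH} and the second estimate of Lemma \ref{lem:yuH-yH} handles the co-state, provided I also bound $\|g'(y_H)\|$ by $\|g'(y)\|+L_g\|y-y_H\|$ using the Lipschitz property \eqref{eqn:Lg}.

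The mild obstacle is that the bound on $\|p-p_H\|_1$ now contains a term $CHL_g\|y-y_H\|$ on the right, which would make the estimate appear circular. I would resolve this by inserting the already-established bound on $\|y-y_H\|_1$ (hence on $\|y-y_H\|$ by Poincar\'e) into this term; since this contribution carries an additional factor $H$, under the smallness assumption on $H$ already invoked in Lemma \ref{lem:u-uH} (where $CL_g^2H^2\leq 1$ was used) the resulting $H^2\|u-u_H\|$ and $H^2\|g'(y)\|$ contributions are harmless and fit within the stated right hand side. Summing the three bounds on $\|y-y_H\|_1$, $\|p-p_H\|_1$ and $\|u-u_H\|$ then yields the theorem, with the constant $C$ depending on $\amin$, $\amax$, $d$, $C_B$, $L_g$, $L_h$, $M_h$, $C_{\I}$ and $C_{\NH}$ but independent of $H$ and $H_U$.
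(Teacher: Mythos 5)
Your proposal is correct and follows essentially the same route as the paper: split $y-y_H$ and $p-p_H$ by triangle inequality through the auxiliary solutions $y(u_H)$, $p(u_H)$, invoke Lemmas \ref{lem:y-yuH}, \ref{lem:yuH-yH}, \ref{lem:u-uH}, and close using the smallness of $H$. The only cosmetic difference is ordering: you bound $\|u-u_H\|$ first and then substitute forward into the $y$- and $p$-estimates, whereas the paper writes all three intermediate bounds (leaving $L_g\|y-y_H\|$ on the right) and absorbs that term at the end by taking $CL_gH\leq\tfrac12$ — the same kick-back, just performed once at the end rather than incrementally.
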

\begin{proof}
Note that
\begin{eqnarray*}
\|y-y_H\|_{1,\Omega}&\leq& \|y-y(u_H)\|_{1,\Omega}+\|y(u_H)-y_H\|_{1,\Omega},\\
\|p-p_H\|_{1,\Omega}&\leq& \|p-p(u_H)\|_{1,\Omega}+\|p(u_H)-p_H\|_{1,\Omega}.
\end{eqnarray*}
Lemma \ref{lem:y-yuH} leads to
\begin{displaymath}
\|y-y(u_H)\|_{1}+\|p-p(u_H)\|_{1} \leq C\|u-u_H\|.
\end{displaymath}
Lemma \ref{lem:yuH-yH} leads to
\begin{displaymath}
\|y(u_H)-y_H\|_{1}+\|p(u_H)-p_H\|_{1} \leq C H(\|f+Bu\|+C_B\|u-u_H\|+\|g'(y)\|+L_g\|y-y_H\|).
\end{displaymath}
Lemma \ref{lem:u-uH} leads to
\begin{displaymath}
\|u-u_H\|\leq C(H_U(|u|_{1,\Omega_U}+|p|_{1})+H(\|f+Bu\|+\|g'(y)\|)).
\end{displaymath}
Combining all those estimates, we conclude the proof by taking $\ds CL_g H \leq \frac12$.
\end{proof}
\section{Algorithm and Numerical Experiments}
\label{sec:numerics}

\subsection{Numerical algorithm}
To solve the optimal control problem \eqref{eqn:ocp-opt-H}, we will introduce the following projection algorithm.

\def\Pbk{P^b_K}

Define the projection operator $\Pbk:U\rightarrow K$: for $w\in U$, find $\Pbk w\in K$ such that
\begin{eqnarray}
(\Pbk w-w, \Pbk w-w)=\min_{u\in K}(u-w,u-w),\label{7.11}
\end{eqnarray}
which is equivalent to the inequality
\begin{eqnarray}
(\Pbk w-w, v-\Pbk w)\geq 0,\forall v\in K.\label{7.12}
\end{eqnarray}
It is clear that $\Pbk$ is well-defined for any closed convex subset $K\subset U$. For example, when $ K=\{v\in L^2(\Omega_U)|\int_{\Omega_U}v\geq 0\}$, for any $w\in U$, we have
\begin{equation}
\Pbk w=\max\{0,-\bar{w}\}-(-w)=-\min\{0,\bar{w}\}+w.
\label{eqn:Pbk}
\end{equation}
where $\bar{w}=\int_{\Omega_U}w/\int_{\Omega_U}1$ denotes the average of $w$ on $\Omega_U$.
The formulas for other important cases can be found in
\cite{liu2008adaptive}.

We have the following lemma.

\begin{lem}
For the solution $u$ of
\begin{displaymath}
	\min\limits_{u\in K \subset L^2(\Omega_U)}J(u)
\end{displaymath}
or the equivalent optimality condition
\begin{displaymath}
	(J'(u), v-u)\geq 0, \quad \forall v\in K,
\end{displaymath}
where $J(u)$ is a convex functional of $u$. It holds that
\begin{displaymath}
	u = \Pbk (u-\rho J'(u)).
\end{displaymath}
Furthermore, for any $u$, $v\in U$
\begin{displaymath}
	\|\Pbk u - \Pbk v\| \leq \|u-v\|.
\end{displaymath}
\end{lem}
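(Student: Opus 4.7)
The plan is to prove the two assertions separately, using only the variational characterization \eqref{7.12} of the projection $\Pbk$ together with the optimality condition stated in the lemma.

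For the fixed-point identity $u=\Pbk(u-\rho J'(u))$, I would start from the defining inequality \eqref{7.12} of $\Pbk$: $w':=\Pbk w$ is characterized as the unique element of $K$ satisfying $(w'-w,v-w')\geq 0$ for all $v\in K$. Plugging in $w=u-\rho J'(u)$ and testing the candidate $w'=u$, the inequality $(u-(u-\rho J'(u)),v-u)\geq 0$ reduces to $\rho(J'(u),v-u)\geq 0$, which is exactly the optimality condition (for any $\rho>0$). Since the projection is unique, this forces $\Pbk(u-\rho J'(u))=u$. The implicit assumption $\rho>0$ should be noted; otherwise the conclusion is trivial or vacuous.

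For the non-expansiveness $\|\Pbk u-\Pbk v\|\leq\|u-v\|$, I would use \eqref{7.12} twice. Applying it at the point $u$ with test element $\Pbk v\in K$, and at the point $v$ with test element $\Pbk u\in K$, gives
\begin{align*}
(\Pbk u-u,\Pbk v-\Pbk u)&\geq 0,\\
(\Pbk v-v,\Pbk u-\Pbk v)&\geq 0.
\end{align*}
Adding these two inequalities and rearranging yields
\[
(u-v,\Pbk u-\Pbk v)\geq \|\Pbk u-\Pbk v\|^2,
\]
and Cauchy--Schwarz on the left-hand side produces the desired bound.

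There is no real obstacle here; both steps are standard Hilbert-space projection arguments. The only subtlety is the (unstated) positivity of $\rho$ in the first identity, which I would mention explicitly for clarity. The second claim does not rely on convexity of $J$ and uses only that $K$ is a closed convex subset of the Hilbert space $U$, which is already in force from the assumptions on the admissible set.
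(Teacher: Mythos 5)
Your proof is correct and follows essentially the same route as the paper: the fixed-point identity is verified by inserting $w=u-\rho J'(u)$ and $w'=u$ into the variational characterization of $\Pbk$, and the non-expansiveness follows by applying that characterization at $u$ and $v$ and combining with Cauchy--Schwarz. The paper expands $\|\Pbk u-\Pbk v\|^2$ directly and drops the two nonpositive cross terms, whereas you add the two inequalities first; these are the same argument written in a slightly different order, and your remark that $\rho>0$ is needed is a small but valid clarification.
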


\begin{proof}
\begin{displaymath}
	(u-(u-\rho J'(u)), v-u)  = \rho(J'(u), v-u)\geq 0, \forall v\in K.
\end{displaymath}
therefore $u = \Pbk (u-\rho J'(u))$.
	
Furthermore,
\begin{align*}
	(\Pbk u - \Pbk v, \Pbk u - \Pbk v) & = (\Pbk u - u + u - v + v-\Pbk v, \Pbk u - \Pbk v)\\
								  & = (u-v, \Pbk u - \Pbk v) + (\Pbk u - u,  \Pbk u - \Pbk v) \\
&+								    	    (\Pbk v - v, \Pbk v - \Pbk u)\\
							         & \leq (u-v, \Pbk u - \Pbk v) \\
							         & \leq \| u - v\| \|\Pbk u - \Pbk v\|.
\end{align*}

Hence,
\begin{displaymath}
	\|\Pbk u - \Pbk v\| \leq \| u - v\|.
\end{displaymath}
\end{proof}

The convergence results for this algorithm
are given in Theorem 8.2.1 and Remark 8.2.1 of \cite{liu2008adaptive}.

Let $V_h\subset H^1_0(\Omega)$ be the fine scale finite element space
associated with fine mesh triangulation $\mathcal{T}^h$, and $U_h\subset L^2(\Omega)$ is the piecewise constant finite element space associated with $\mathcal{T}^h_U$. $V_H$ and $U_H$ are the corresponding coarse mesh finite element spaces introduced in \S~\ref{sec:grps}. We have the following iterative algorithm to solve \eqref{eqn:ocp-opt-H}.

\begin{algorithm}[H]
\caption{Main algorithm for optimal control problem}
\label{alg:main}
\begin{algorithmic}
\STATE{\textbf{STEP 1}: Initialize
$u^{(0)}:=0, n:=0$, and tolerance $\ve>0$.}

\textbf{STEP 2}: Compute
\begin{equation}
\begin{array}{ll}
a(y^{(n)}, w) & = (f+Bu^{(n)}, w), y^{(n)} \in V_H, \forall w\in V_H\\
a(p^{(n)}, q) & = (g'(y^{(n)}), q), p^{(n)}\in V_H, \forall q\in V_H\\
(u^{(n+\frac12)}, v) & = (u^{(n)}, v) - \rho_n (h'(u^{(n)})+B^*p^{(n)} , v), u^{(n+\frac12)}, u^{(n)} \in U_H, \forall v\in U_H\\
u^{(n+1)} & = \Pbk u^{(n+\frac12)}.
\end{array}
\label{eqn:algorithm1}
\end{equation}

\textbf{STEP 3}: If $\|y^{(n+1)}-y^{(n)}\|< \ve$, \textbf{STOP}; else let $n=n+1$, goto \textbf{STEP 2}.
\end{algorithmic}
\end{algorithm}

Now, we are in the position to prove the convergence of the above algorithm.

\begin{thm}
The triplet $(y^{(n)}, u^{(n)}, p^{(n)})$ in Algorithm 1 converges to the triplet $(y_H, u_H, p_H)$ in \eqref{eqn:ocp-opt-H}. To be more precise, if we take $\rho^{(n)} = \rho$ with $0<\rho<1$ such that $1-2\rho M_h + 2\rho^2 L_h^2 < \delta^2/2 $ and $2\rho C^2_B L_g + 2\rho^2 C_B^4L_g^2 <  \delta^2/2$, with $0<\delta<1$, we have
\begin{displaymath}
\|y^{(n)}-y_H\|_1+ \|p^{(n)}-p_H\|_1+\|u^{(n)}-u_H\| \leq C\delta^n \|u_H\|.
\end{displaymath}
\end{thm}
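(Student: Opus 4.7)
The plan is to view the algorithm as a fixed-point iteration of the map $u \mapsto \Pbk(u - \rho(h'(u) + B^* p(u)))$ on the discrete space $U_H$, and to show that this map is a $\delta$-contraction in the $L^2$ norm under the stated smallness conditions on $\rho$. The three ingredients will be (i) the non-expansivity of $\Pbk$ together with the fixed-point characterization of $u_H$ from the preceding lemma applied to the discrete objective, (ii) strong convexity and Lipschitz continuity of $h'$ to handle the ``control part'' of the residual, and (iii) the state/co-state equations on $V_H$ to control $B^*(p^{(n)}-p_H)$ by $\|u^{(n)}-u_H\|$.

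First I would record that the discrete variational inequality in \eqref{eqn:ocp-opt-H} is equivalent to $u_H = \Pbk(u_H - \rho(h'(u_H) + B^* p_H))$ (for any $\rho>0$), exactly as in the preceding lemma. Since $u^{(n+1)} = \Pbk\bigl(u^{(n)} - \rho(h'(u^{(n)})+B^* p^{(n)})\bigr)$, the non-expansivity of $\Pbk$ gives
\begin{equation*}
\|u^{(n+1)}-u_H\|^{2} \le \bigl\|(u^{(n)}-u_H) - \rho\bigl(h'(u^{(n)})-h'(u_H)\bigr) - \rho B^*\bigl(p^{(n)}-p_H\bigr)\bigr\|^{2}.
\end{equation*}
Expanding the square and using $(a+b)^{2}\le 2a^{2}+2b^{2}$ on the $\rho$-part splits the bound into a ``control piece'' $\|(u^{(n)}-u_H) - \rho(h'(u^{(n)})-h'(u_H))\|^{2}$ and a ``co-state piece'' involving $B^*(p^{(n)}-p_H)$ plus a cross term.

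For the control piece, standard manipulations using the uniform convexity \eqref{hu} and the Lipschitz bound \eqref{eqn:Lh} yield $(1-2\rho M_h + \rho^{2} L_h^{2})\|u^{(n)}-u_H\|^{2}$, which together with the doubling produces the factor $1-2\rho M_h + 2\rho^{2} L_h^{2}$ appearing in the theorem. For the co-state piece, I would subtract the state equations in \eqref{eqn:ocp-opt-H} with data $u^{(n)}$ and $u_H$, giving $a(y^{(n)}-y_H,w) = (B(u^{(n)}-u_H),w)$ for all $w\in V_H$, and similarly $a(p^{(n)}-p_H,q)=(g'(y^{(n)})-g'(y_H),q)$. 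As in Lemma \ref{lem:y-yuH}, these combined with the coercivity of $a(\cdot,\cdot)$, the continuity of $B$, and the Lipschitz constant $L_g$ give $\|p^{(n)}-p_H\|\le C C_B L_g\|u^{(n)}-u_H\|$, whence $\|B^*(p^{(n)}-p_H)\|\le C_B^{2}L_g\|u^{(n)}-u_H\|$ (up to a harmless Poincar\'e constant that I will absorb into $C_B$). Using Cauchy--Schwarz on the cross term $-2\rho(u^{(n)}-u_H, B^*(p^{(n)}-p_H))$ and the $\rho^{2}$ term $\rho^{2}\|B^*(p^{(n)}-p_H)\|^{2}$ then supplies exactly the contribution $2\rho C_B^{2} L_g + 2\rho^{2} C_B^{4} L_g^{2}$ that matches the hypothesis. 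Summing both pieces yields $\|u^{(n+1)}-u_H\|^{2}\le \delta^{2}\|u^{(n)}-u_H\|^{2}$, so iterating from $u^{(0)}=0$ gives $\|u^{(n)}-u_H\|\le \delta^{n}\|u_H\|$.

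Finally, convergence of $y^{(n)}$ and $p^{(n)}$ follows from the same state/co-state difference equations: $\|y^{(n)}-y_H\|_{1}\le C\|u^{(n)}-u_H\|$ and $\|p^{(n)}-p_H\|_{1}\le C\|u^{(n)}-u_H\|$, which gives the combined estimate claimed in the theorem. The main technical obstacle is the co-state cross term, because $p^{(n)}-p_H$ is not directly comparable to $u^{(n)}-u_H$ without going through the auxiliary chain state $\to$ co-state and paying a factor depending on $C_B$ and $L_g$; this is precisely what dictates the two smallness conditions on $\rho$ (one absorbing $M_h$/$L_h$ for the control part, the other absorbing $C_B^{2}L_g$ for the co-state part). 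Once this chain bound is in place the rest is a routine geometric iteration.
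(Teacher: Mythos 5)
The proposal is correct and follows essentially the same route as the paper: apply the non-expansivity of $\Pbk$ together with the discrete fixed-point characterization of $u_H$, expand the square, use uniform convexity and Lipschitz continuity of $h'$ for the control terms, bound $\|p^{(n)}-p_H\|$ by $C\,C_B L_g\|u^{(n)}-u_H\|$ via the state and co-state difference equations, and apply $(a+b)^2\le 2a^2+2b^2$ to the $\rho^2$ term to obtain the contraction factor $1-2\rho M_h + 2\rho^2 L_h^2 + 2\rho C_B^2 L_g + 2\rho^2 C_B^4 L_g^2$, concluding by a geometric iteration from $u^{(0)}=0$. Your framing of the estimate as a split into a ``control piece'' and a ``co-state piece'' is only presentational; you correctly acknowledge the Poincar\'e-type constant absorbed into $C_B$, matching the paper's computation.
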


\begin{proof}
	By \eqref{eqn:algorithm1} and \eqref{eqn:ocp-opt-H}, we have the following equation for $y^{(n)} - y_H$ and $p^{(n)}-p_H$,
	\begin{align*}
	a(y^{(n)}-y_H, w) & = (Bu^{(n)}-Bu_H, w), \quad y^{(n)} \in V_H, \forall w\in V_H,\\
	a(p^{(n)}-p_H, q) & = (g'(y^{(n)})-g'(y_H), q), \quad p^{(n)}\in V_H, \forall q\in V_H.
	\end{align*}
Therefore, we have
\begin{align*}
	\|y^{(n)}-y_H\|_1 & \leq CC_B \| u^{(n)}-u_H\|,\\
	\|p^{(n)}-p_H\|_1 & \leq CL_g \| y^{(n)}-y_H\|\leq CL_gC_B \|u^{(n)}-u_H\|,
\end{align*}
where the constant $C$ only depends on $\amin$, $\amax$, and $\diam (\Omega)$.

For simplicity of notation, we refer to $\Pbk$ as the projection from $U_H$ to $K_H$, and let
$\rho^{(n)} = \rho$. Therefore, $u^{(n+1)} = \Pbk (u^{(n)} - \rho (h'(u^{(n)})+B^*p^{(n)}))$, and
$u_H = \Pbk (u_H - \rho(h'(u_H)+B^*p_H))$. Hence,
\begin{align*}
	\|u^{(n+1)}-u_H\|^2  \leq &\| u^{(n)} -u_H - \rho (h'(u^{(n)})-h'(u_H)) - \rho (B^*p^{(n)}-B^*p_H)\|^2 \\
					 = &\| u^{(n)} -u_H \|^2 - 2\rho(h'(u^{(n)})-h'(u_H), u^{(n)}-u_H)\\
                       & - 2\rho(B^*p^{(n)}-B^*p_H, u^{(n)}-u_H)\\
						& + 	\rho^2 \|h'(u^{(n)})-h'(u_H) + B^*p^{(n)}-B^*p_H\|^2	\\		
				        \leq 	& \| u^{(n)} -u_H \|^2 - 2\rho M_h\| u^{(n)} -u_H \|^2+ 2\rho^2 L_h^2 \|u^{(n)} \\
				        	& + 2\rho C_B\|p^{(n)}-p_H\| \|u^{(n)} - u_H\|-u_H \|^2 + 2\rho^2C_B^2\|p^{(n)}-p_H\|^2 \\
					\leq  &(1-2\rho M_h + 2\rho^2 L_h^2 + 2\rho C^2_B L_g + 2\rho^2 C_B^4L_g^2) \|u^{(n)} -u_H \|^2.
\end{align*}
Take $0<\rho<1$ such that $1-2\rho M_h + 2\rho^2 L_h^2 < \delta^2/2 $ and $2\rho C^2_B L_g + 2\rho^2 C_B^4L_g^2 <  \delta^2/2$, with $\delta<1$, we have
\begin{displaymath}
	\|u^{(n+1)}-u_H\|  \leq \delta\|u^{(n)}-u_H\|.
\end{displaymath}
Therefore, $\|y^{(n)}-y_H\|_1+ \|p^{(n)}-p_H\|_1+\|u^{(n)}-u_H\| \leq C\delta^n \|u_H\|$.
\end{proof}

We can also write down the matrix form of the Algorithm \ref{alg:main} in the following. Denote
\begin{eqnarray*}
\mathcal{F}&=&(F_1,F_2,\ldots, F_N)^T,\text{ where }F_i=\int_{\Omega} f\phi_i,\\
\mathcal{Y}_d&=&(Y_{d,1},Y_{d,2},\ldots, Y_{d,N})^T, \text{ where }Y_{d,i} = \int_{\Omega}\phi_i y_d,\\
\mathcal{S}&=&(S_{ij})_{N\times N},\text{ where } S_{ij}=\int_{\Omega}a_{ij}\frac{\partial
\phi_i}{\partial x_i}\frac{\partial \phi_j}{\partial x_j},\\
\mathcal{D}&=&(D_{ik})_{N\times m},\text{ where } D_{ik}=\int_{\Omega}B\varphi_k \phi_i,\\
\mathcal{Q}&=&(Q_{ij})_{N\times N},\text{ where } Q_{ij}=\int_{\Omega}\phi_i\phi_j,\\
\mathcal{M}&=&(M_{kl})_{m\times m},\text{ where } M_{kl}=\int_{\Omega_U}\varphi_k\varphi_l.
\end{eqnarray*}
where $i,j=1,2,\ldots, N$; $k,l=1,2,\ldots,m$.  The iterative scheme for
gradient descent  algorithm is as follows:
\begin{algorithm}[H]
\caption{}
\label{alg:buildtree}
\begin{algorithmic}

 \STATE{\textbf{STEP 1}: Initialize
$U^{(0)}:=(0,0,\ldots,0)_{1\times m}^T, n:=0$.}

\textbf{STEP 2}: Compute
\begin{eqnarray*}
Y^{(n)}&=&\mathcal{S}^{-1}(\mathcal{F}+\mathcal{D}U^{(n)})\\
P^{(n)}&=&\mathcal{S}^{-1}(\mathcal{Q}Y^{(n)}-\mathcal{Y}_d)\\
U^{(n+\frac{1}{2})}&=&U^{(n)}-\rho^{(n)}(U^{(n)}+\mathcal{M}^{-1}\mathcal{D}^TP^{(n)})\\
U^{(n+1)}&=&\Pbk (U^{(n+1/2)}).
\end{eqnarray*}

\textbf{STEP 3}: If $\|y^{(n+1)}-y^{(n)}\|$ is sufficiently small, then
stop; else let $n=n+1$, goto step 2.
\end{algorithmic}
\end{algorithm}
Therefore, the main computational cost of the iterative algorithm is to
solve the state and control equations.

\subsection{Numerical Results}
\label{sec:results}
In this section, we present two numerical examples to verify the error
estimates presented in the previous sections. We
consider the elliptic optimal control problem with rough
coefficients,
\begin{eqnarray*}
\min_{u\in K \subset L^2(\Omega_U)}\frac{1}{2}\int_{\Omega}(y-y_d)^2dx+\frac{1}{2}\int_{\Omega}u^2dx\\
-\diiv(a(x)\nabla y)=Bu+f.
\end{eqnarray*}
and the constraint set $K=\{v\in L^2(\Omega)|\int_{\Omega}v\geq
0\}$. The dual equation of the state equation is given by,
\begin{displaymath}
-\diiv(a(x)\nabla p)=y-y_d.
\end{displaymath}
For simplicity, we use Dirichlet boundary condition and set $f=1$. The continuous linear operator is chosen $B = I$,
where $I$ is the identity operator.

We test the numerical methods for two different types of diffusion coefficients. The first is a multiscale trigonometric function, and the second is the SPE10 benchmark for reservoir simulation (\url{http://www.spe.org/web/csp/}).

\def\Nc{N_c}
\def\TH{\mathcal{T}_H}

\subsubsection{Multiscale Trigonometric Example}
\label{sec:trig}

For the first example,  $a(x)$ is a scalar function given by the following expression,
\begin{eqnarray}
 a(x)&=&\frac{1}{6}\big(\frac{1.1+\sin(2\pi x/\varepsilon_1)}{1.1+\sin(2\pi y/\varepsilon_1)}+\frac{1.1+\sin(2\pi y/\varepsilon_2)}{1.1+\cos(2\pi x/\varepsilon_2)}+\frac{1.1+\cos(2\pi x/\varepsilon_3)}{1.1+\sin(2\pi y/\varepsilon_3)}+\nn\\
&&\frac{1.1+\sin(2\pi y/\varepsilon_4)}{1.1+\cos(2\pi x/\varepsilon_4)}+\frac{1.1+\cos(2\pi x/\varepsilon_5)}{1.1+\sin(2\pi y/\varepsilon_5)}+\sin(4x^2y^2)+1\big). \label{9.3}
\end{eqnarray}
 where $\ds\varepsilon_1=\frac{1}{5}, \varepsilon_2=\frac{1}{13}, \varepsilon_3=\frac{1}{17}, \varepsilon_4=\frac{1}{31},
\varepsilon_5=\frac{1}{65}.$ The coefficient $a(x)$ is highly oscillatory with non-separable scales, as shown in Figure \ref{fig:trig}.

\begin{figure}[H]
\centering
\includegraphics[width=0.45\textwidth]{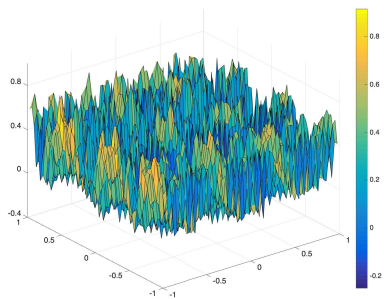}
\caption{Coefficients $a(x)$ in $log_{10}$ scale.}
\centering
\label{fig:trig}
\end{figure}

We take the domain as the unit square
$\Omega_U=\Omega=[0,1]\times[0,1]$. The regular coarse mesh
$\mathcal{T}_H$ is obtained by first subdividing $\Omega$ uniformly into $\Nc\times \Nc$
squares, then each square can be partitioned into two triangles along the
 $(1,1)$ direction. We can further refine the coarse mesh uniformly by dividing each triangle into four similar subtriangles. We refine $\TH$ $J$ times to obtain the fine mesh $\mathcal{T}_h$, therefore, $H = 2^J h$.  Let us refer to Figure \ref{fig:mesh} for an
illustration of the partition. The degrees of freedom of global RPS basis and the global GRPS basis are $(N_c-1)^2$ and $2N_c^2$, respectively.

\begin{figure}[H]
\centering
\subfigure[Coarse mesh, $N_c$=2]{
\includegraphics[width=0.31\textwidth]{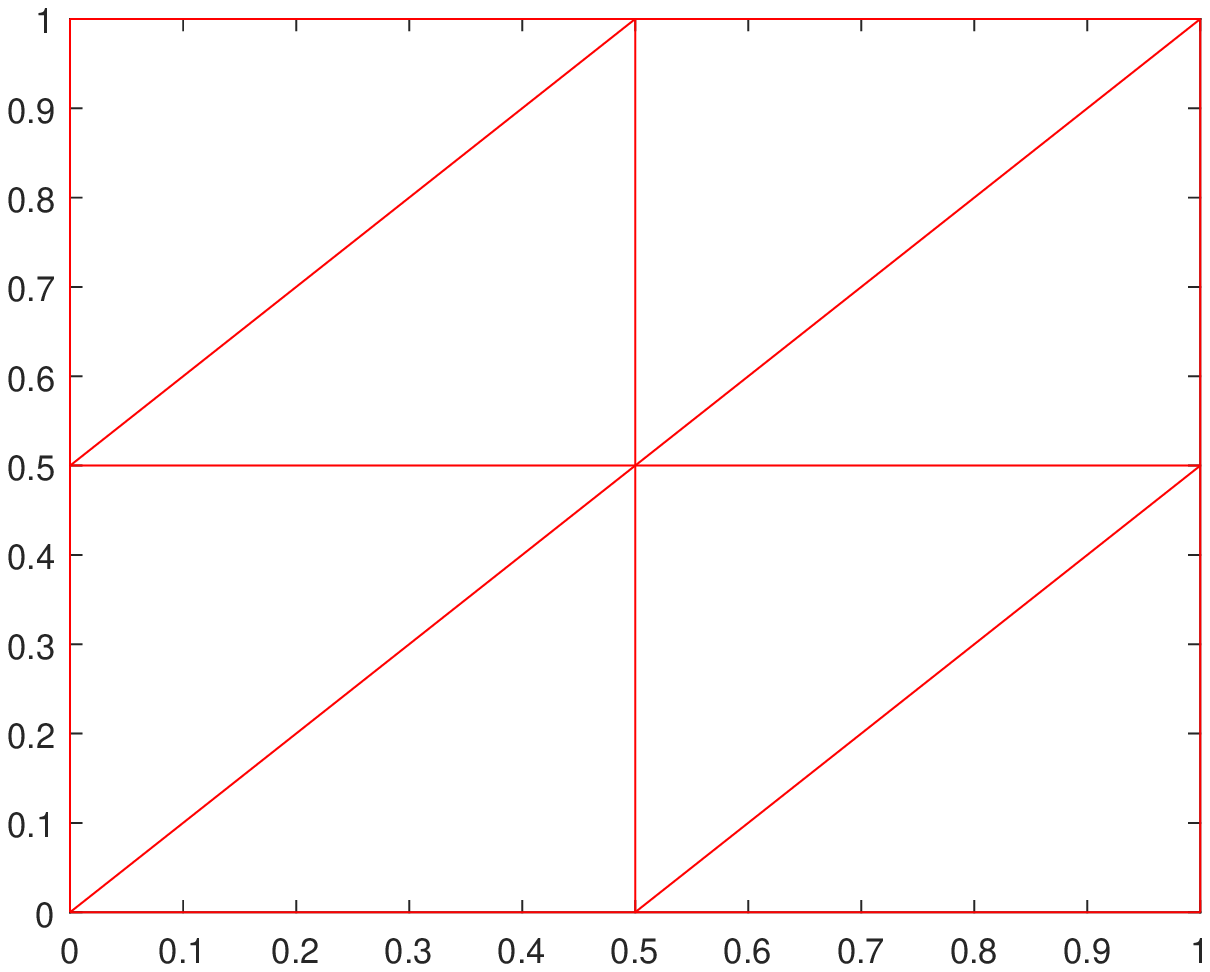}}
\subfigure[Fine mesh, $N_c$=1,J=2]{
\includegraphics[width=0.31\textwidth]{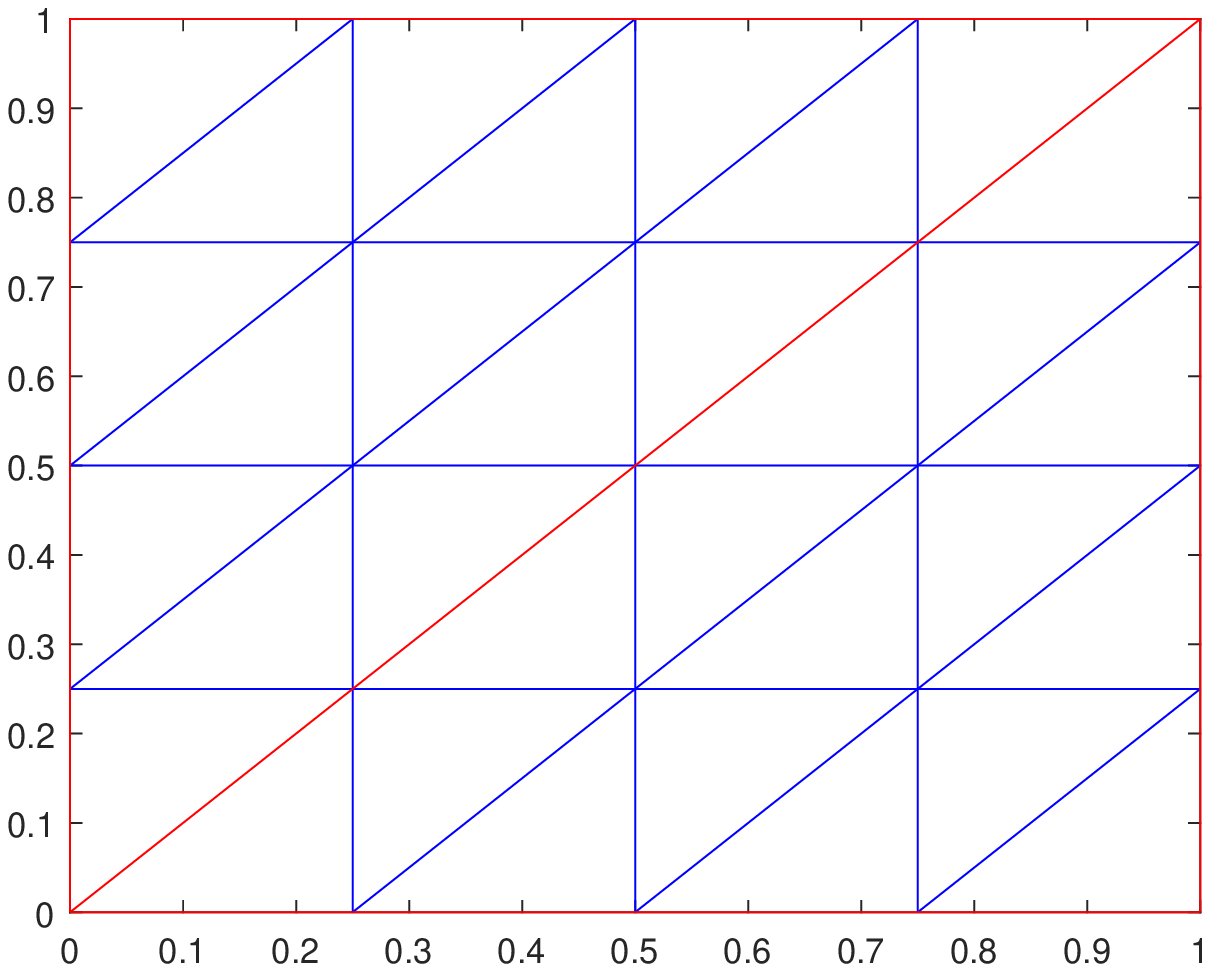}}
\subfigure[Fine mesh, $N_c$=1,J=3]{
\includegraphics[width=0.31\textwidth]{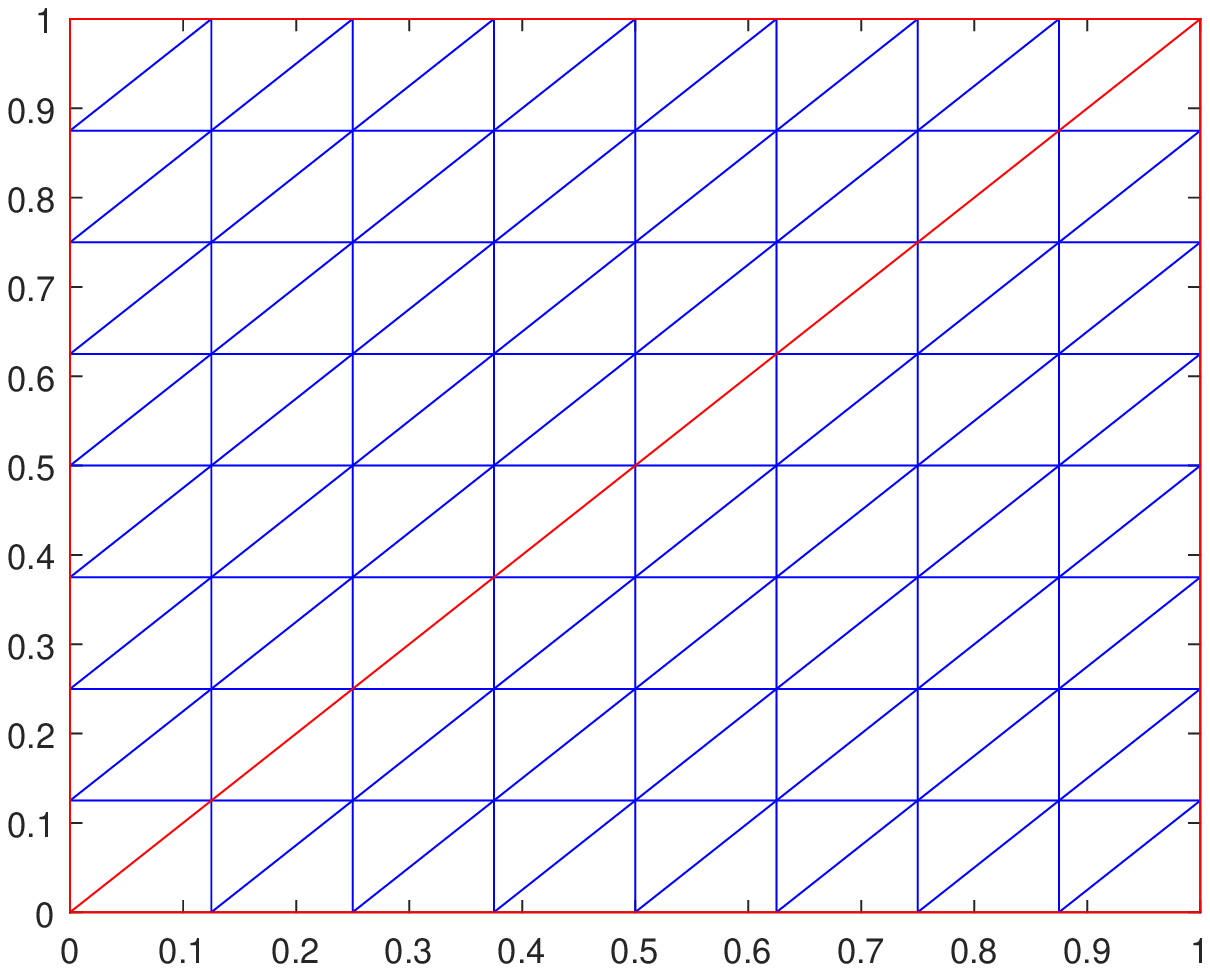}}
\caption{Coarse and fine mesh of the unit square.}
\centering
\label{fig:mesh}
\end{figure}

We compute localized RPS basis
$\phi_i^{l}$ on localized sub-domains $\Omega_i^l$ defined by adding $l$
layers of coarse triangles around coarse node $x_i$. More precisely $\Omega_i^1$ is the
union of triangles $T\in \mathcal{T}_H$ with $x_i$ as a common node, and $\ds\Omega_i^{l+1}: = \cup_{T\cap \Omega_i^l \neq \emptyset} T$.

Similarly, we compute localized GRPS basis
$\phi_i^{l}$ on localized sub-domains $\bar \Omega_i^l$ defined by adding $l$
layers of coarse triangles around coarse triangle $T_i$. More precisely $\bar \Omega_i^1$ is the
union of triangles $T\in \mathcal{T}_H$ with $T_i$ as common node, common  edge, and $\ds\bar\Omega_i^{l+1}: = \cup_{T\cap \bar\Omega_i^l \neq \emptyset} T$.  We refer to Figure \ref{fig:patch} for an illustration of the patches $\Omega_i^l$ and $\bar\Omega_i^l$ of the RPS basis and GRPS basis with $l=1,2,3$.

\begin{figure}[H]
\centering
\subfigure[$\Omega_i^1$]
{\includegraphics[width=0.31\textwidth]{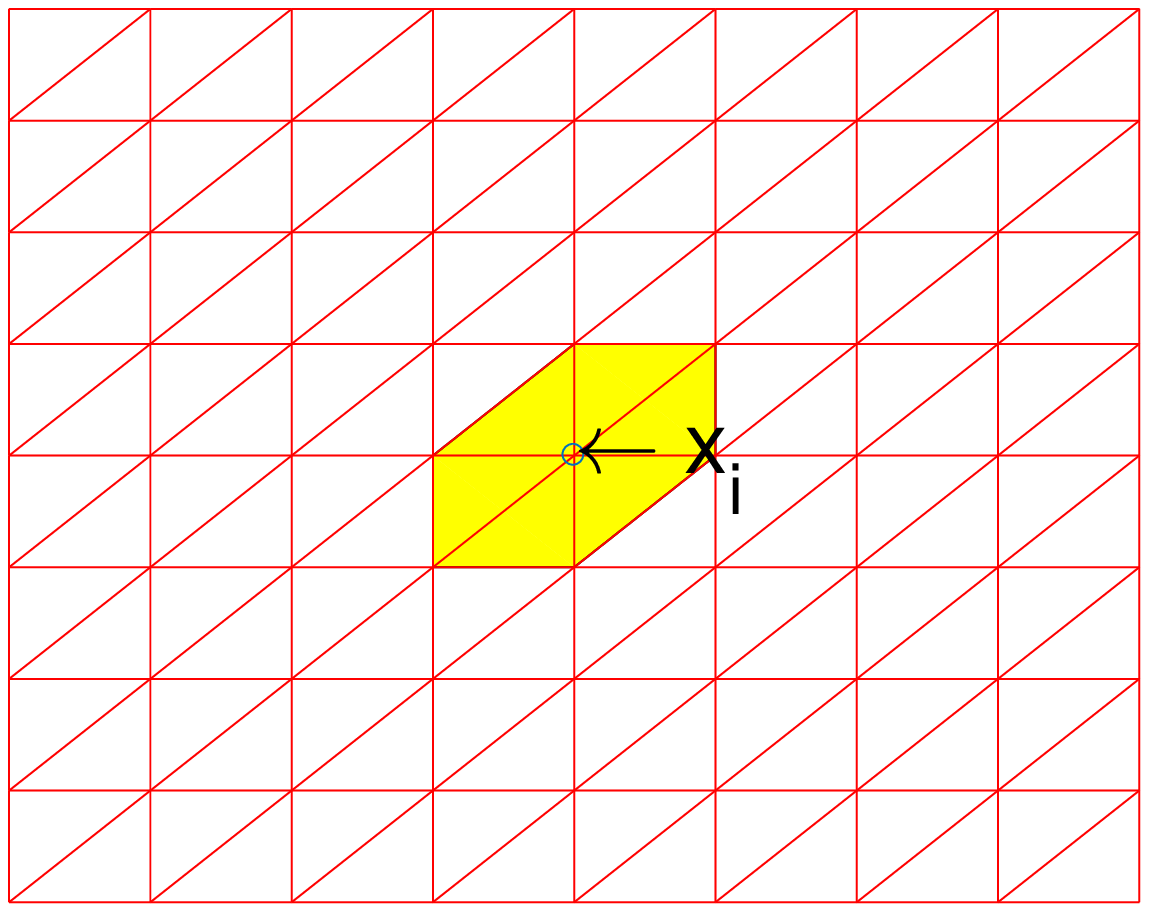}}
\subfigure[$\Omega_i^2$]
{\includegraphics[width=0.31\textwidth]{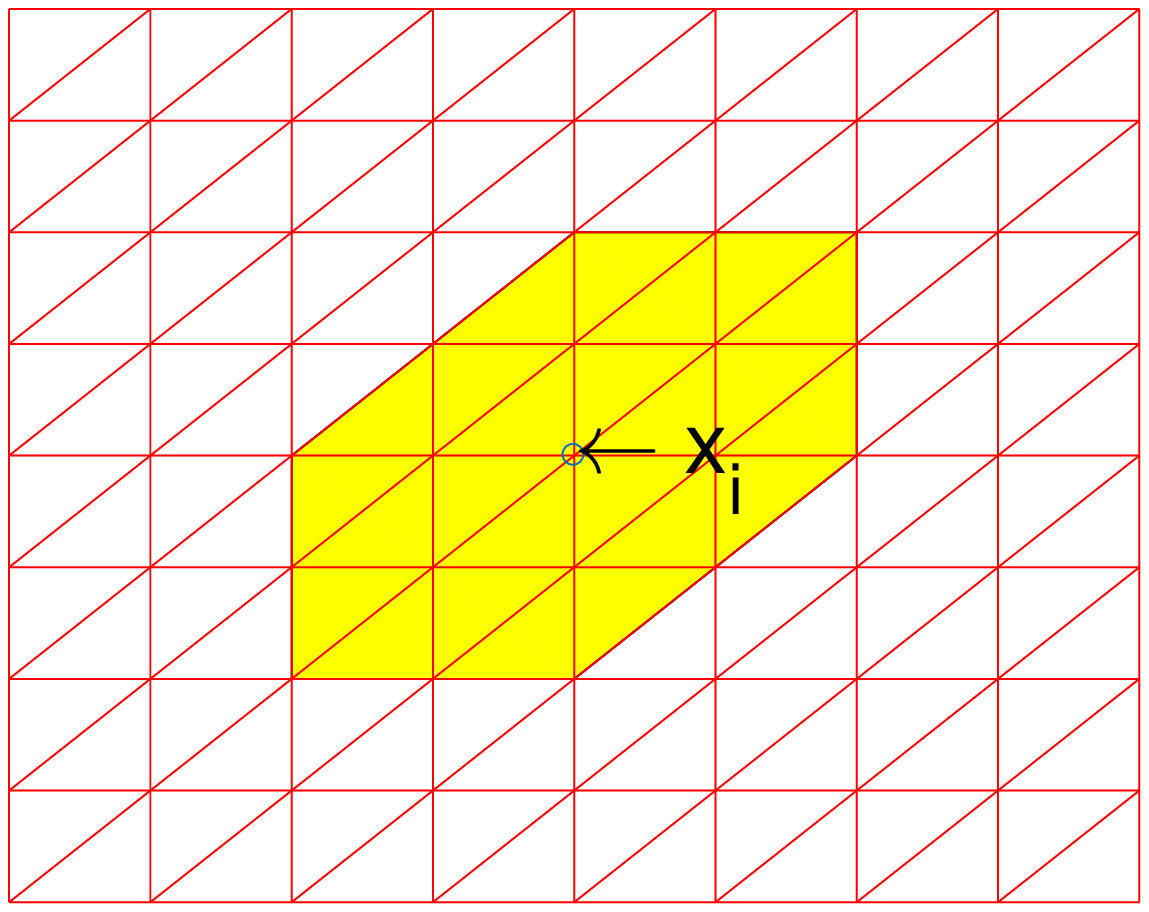}}
\subfigure[$\Omega_i^3$]
{\includegraphics[width=0.31\textwidth]{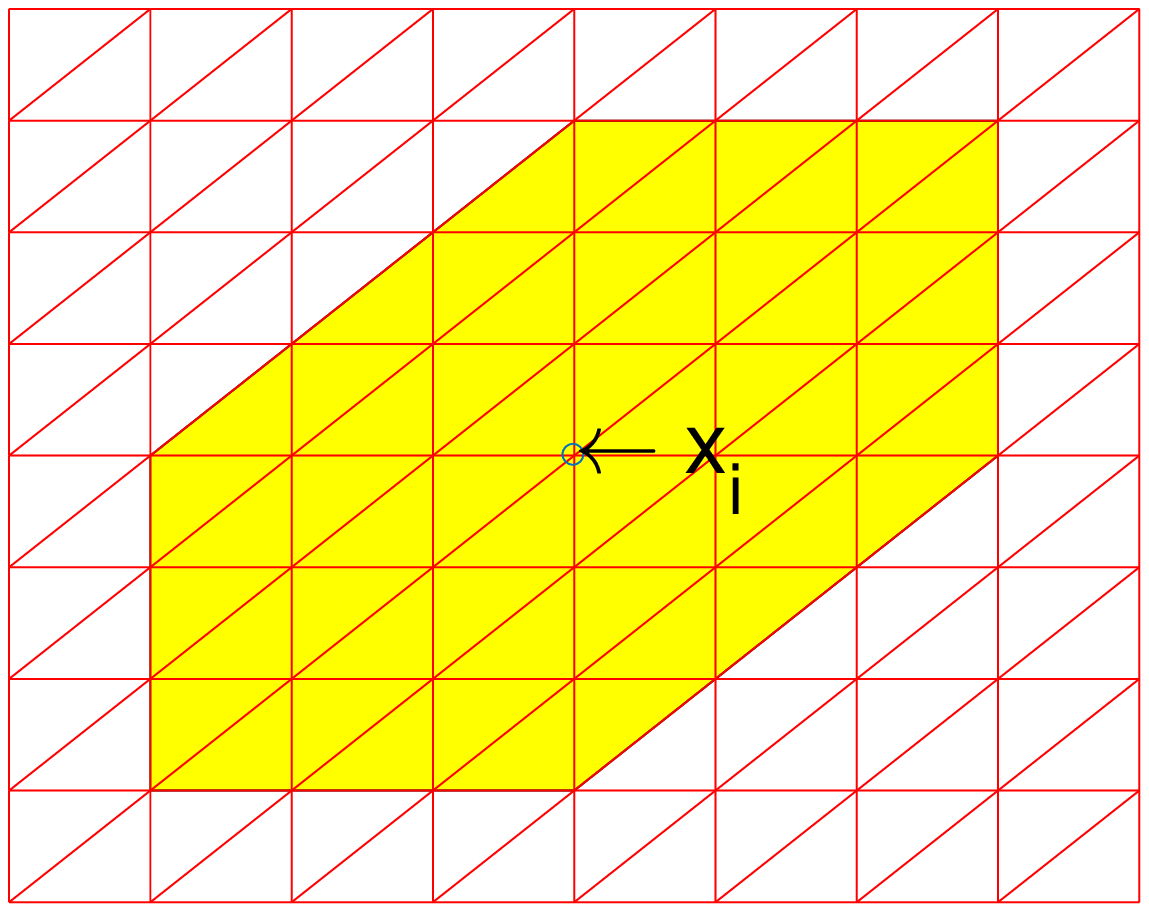}}

\subfigure[$\bar\Omega_i^1$]
{\includegraphics[width=0.31\textwidth]{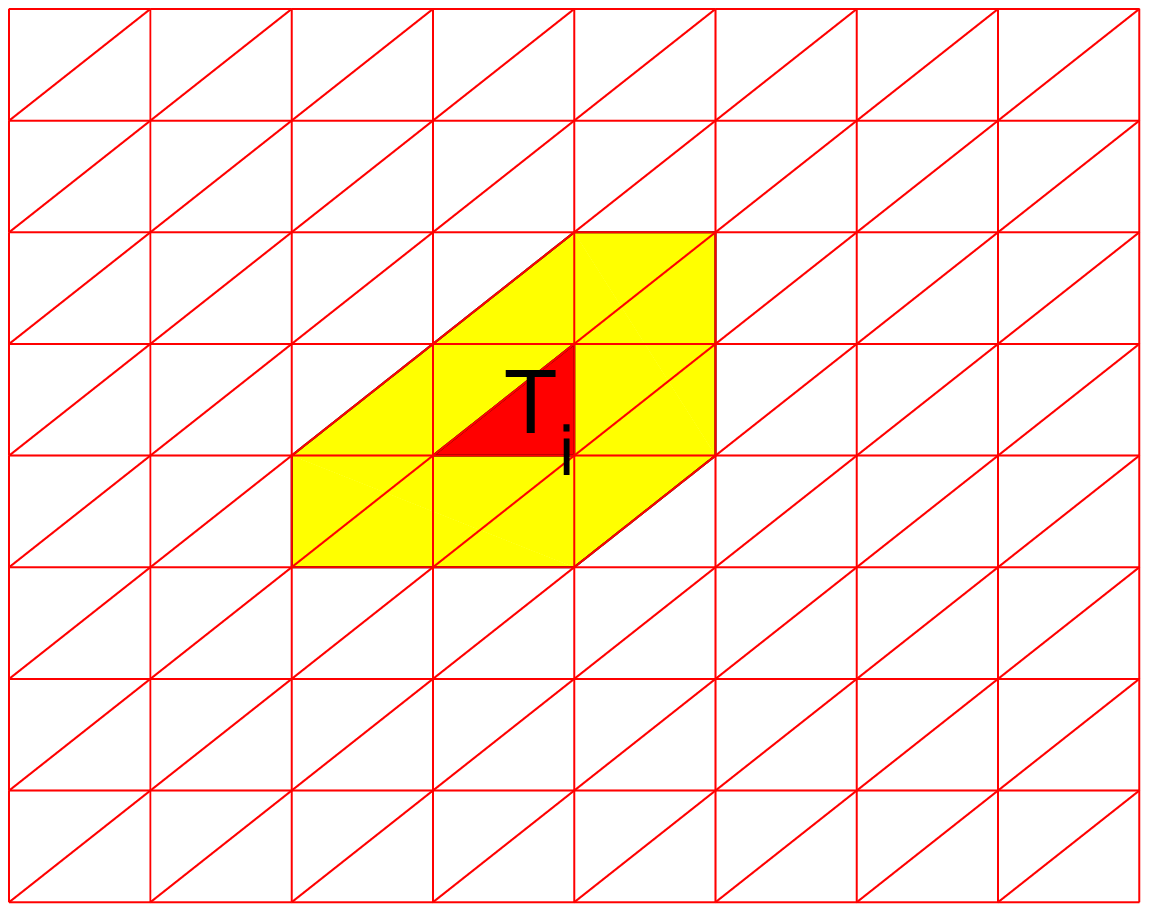}}
\subfigure[$\bar\Omega_i^2$]
{\includegraphics[width=0.31\textwidth]{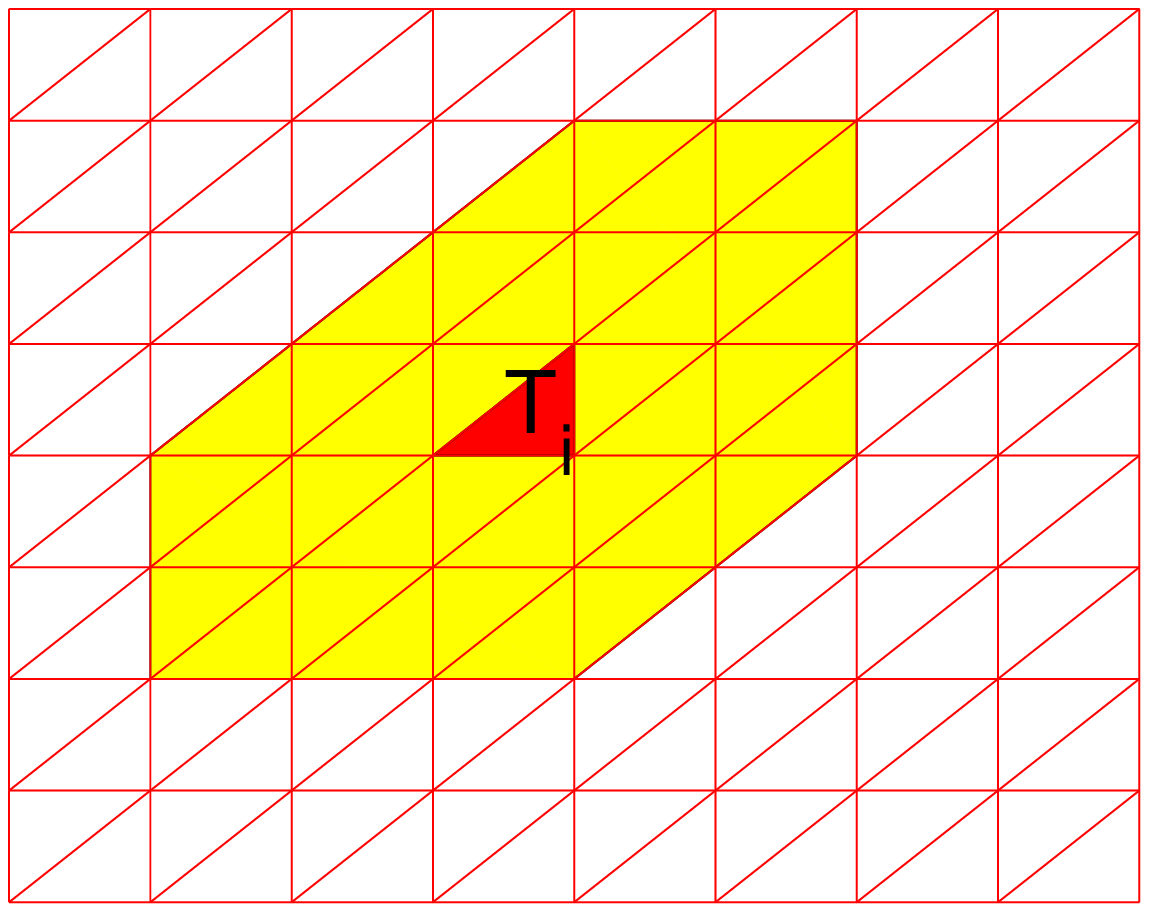}}
\subfigure[$\bar\Omega_i^3$]
{\includegraphics[width=0.31\textwidth]{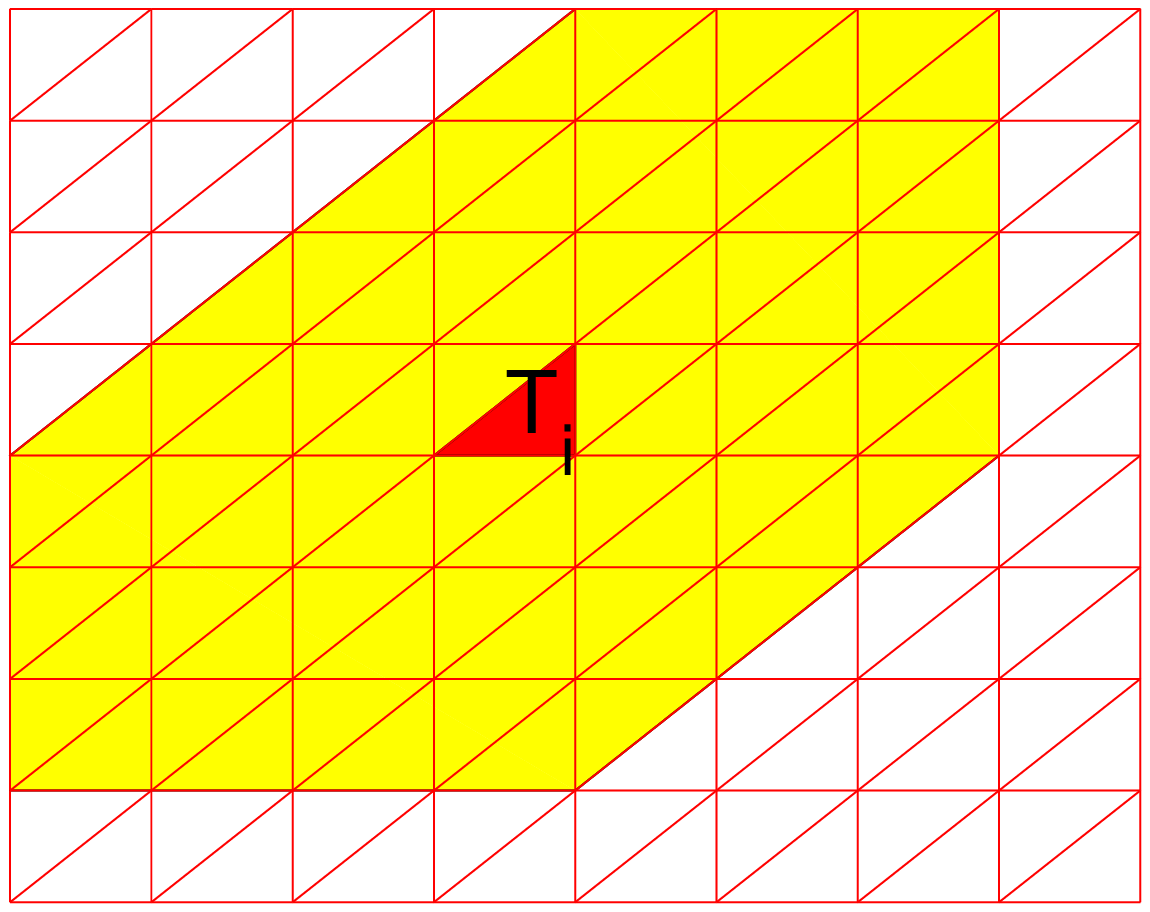}}
\caption{Local patches for RPS and GRPS basis}
\centering
\label{fig:patch}
\end{figure}

To better understand the exponential decay properties of the RPS basis and GRPS basis, we plot
the global RPS basis and the localized RPS basis in Figure
\ref{fig:decay}. The coarse mesh has $H=1/32$, and the fine mesh has
$h=1/256$ (namely, $\Nc=32$, $J=3$). Figure \ref{fig:decay}(a) shows the
shape of global basis $\phi_i$ centered at $(0.5, 0.5)$ in the $\log_{10}$ scale.
Figure \ref{fig:decay}(b) shows a slice of $\phi_{481}$ along the
 x-axis. Figure \ref{fig:localbasis} shows the localized basis $\phi_i^{loc}$ for the same node
for various levels of localization(i.e., for $l=1,4,7$).  It
is consistent with the exponential decay and localization results.
\vspace{-10pt}
\begin{figure}[H]
\centering
\subfigure{
\includegraphics[width=0.45\textwidth]{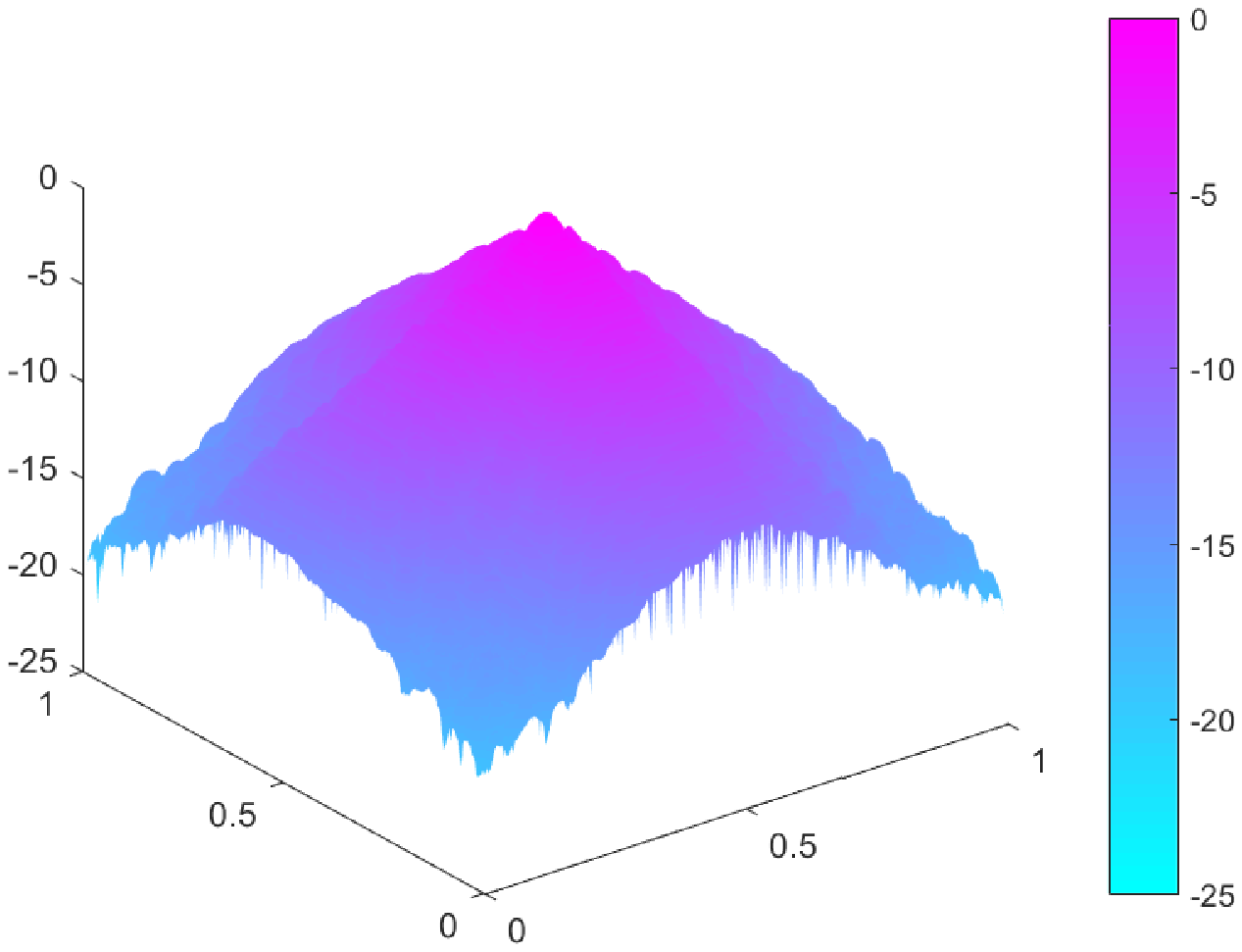}}
\subfigure{
\includegraphics[width=0.45\textwidth]{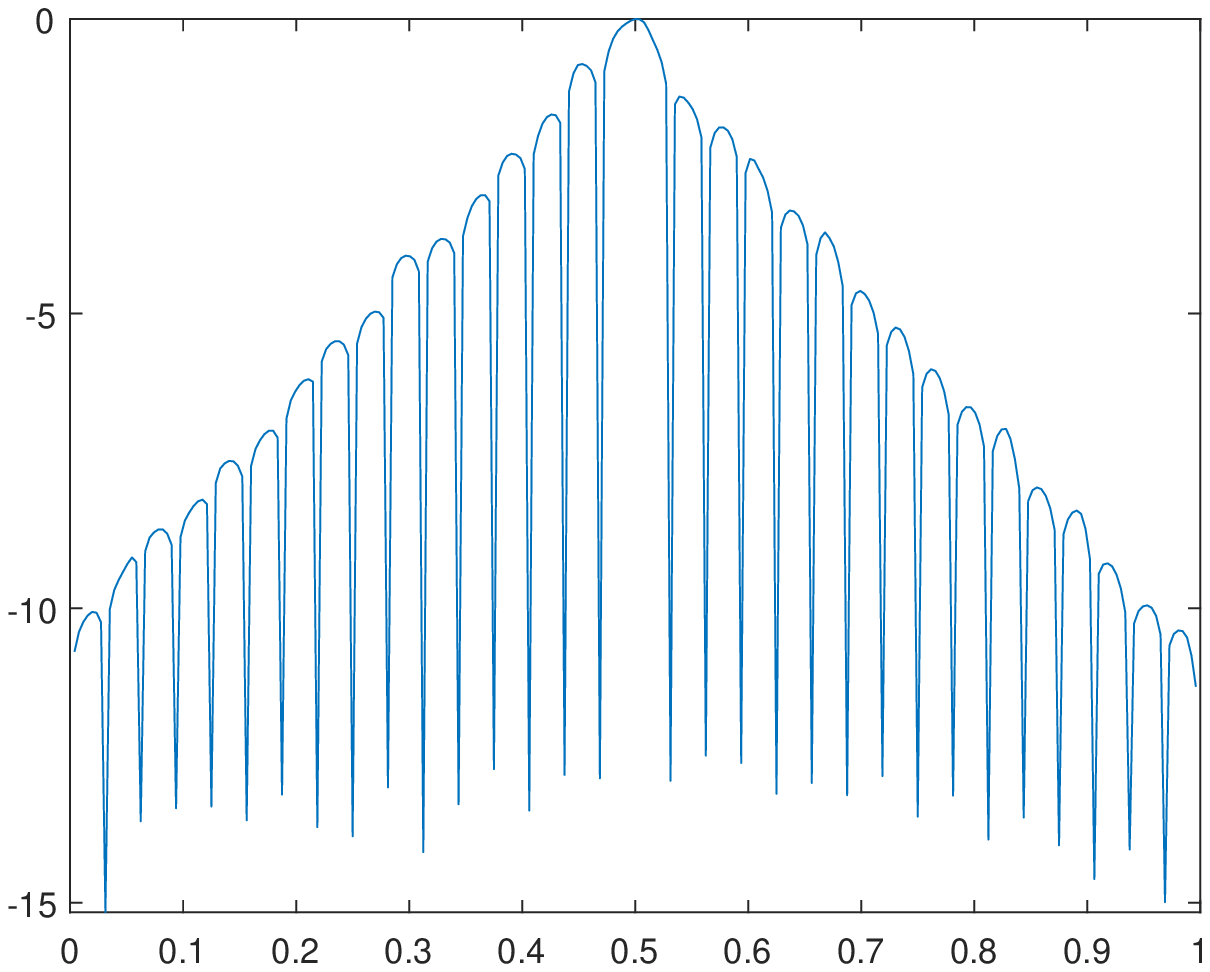}}
\caption{The shape and 1d slice along $x$ axis for the global RPS basis $\phi_i$ in $log_{10}$ scale.}
\centering
\label{fig:decay}
\end{figure}

\vspace{-10pt}

\begin{figure}[H]
\centering
\subfigure[Slice of $\phi_i^{1}$ along $x$ axis]{
\includegraphics[width=0.3\textwidth]{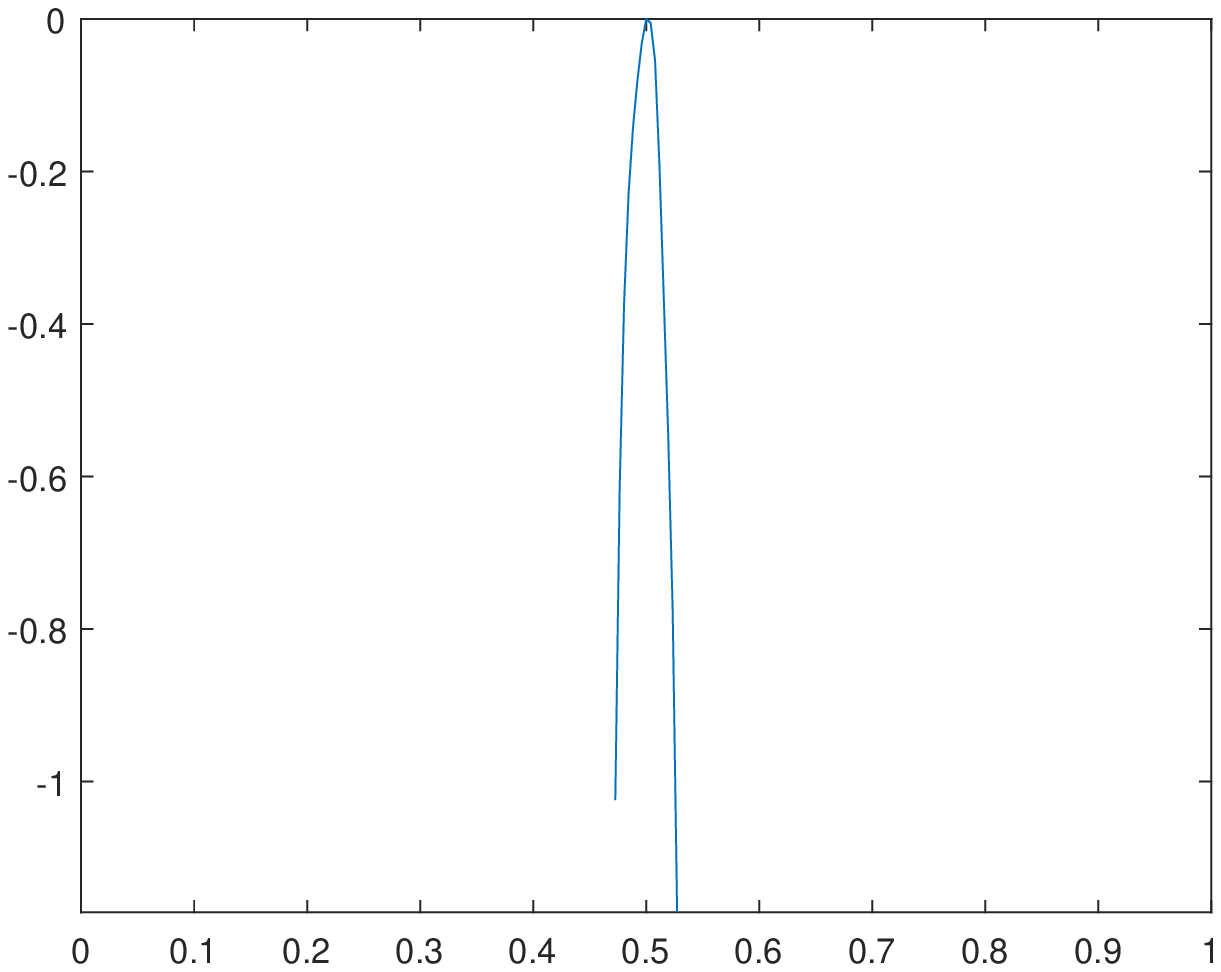}}
\subfigure[Slice of $\phi_i^{4}$ along $x$ axis]{
\includegraphics[width=0.3\textwidth]{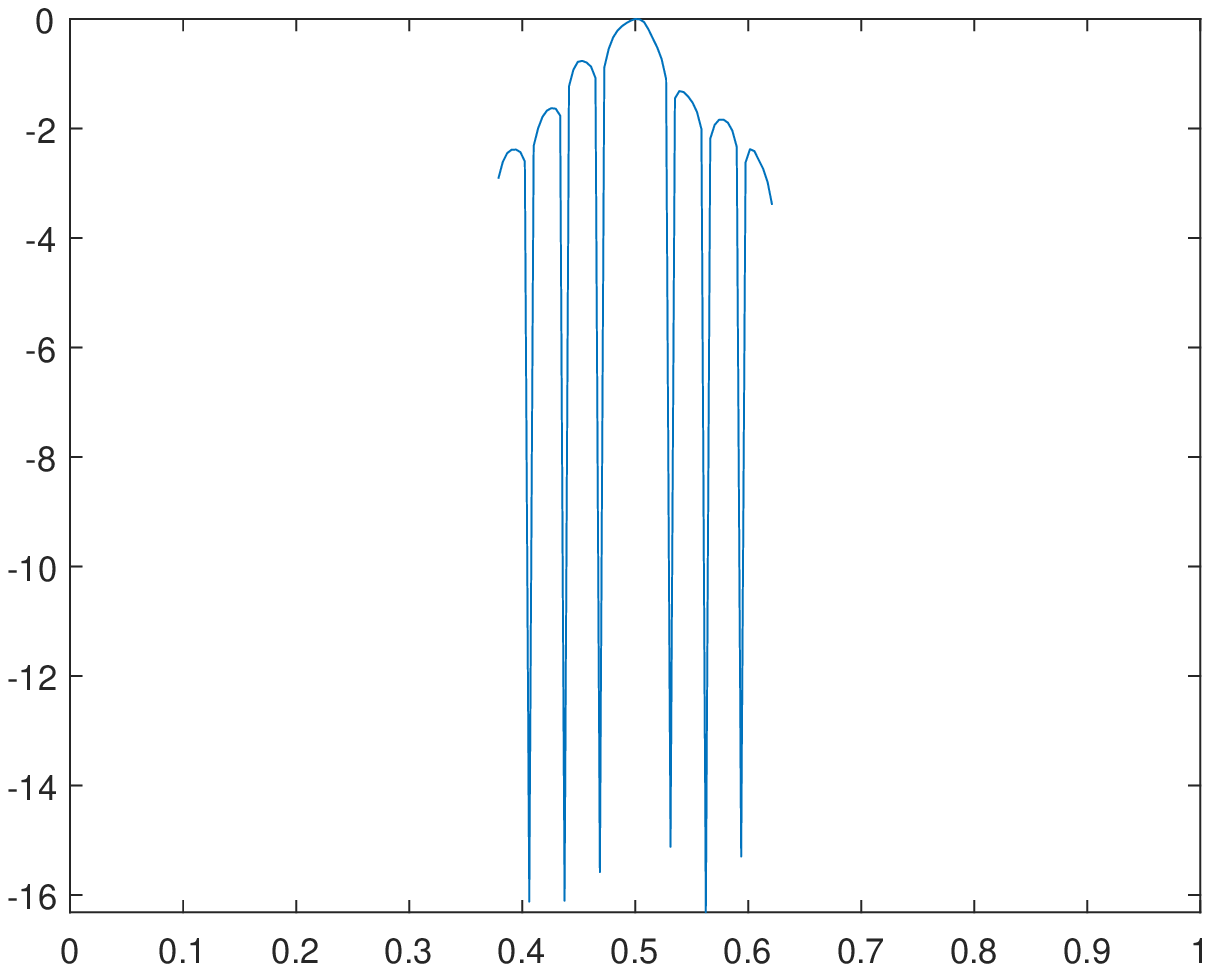}}
\subfigure[Slice of $\phi_i^{7}$ along $x$ axis]{
\includegraphics[width=0.3\textwidth]{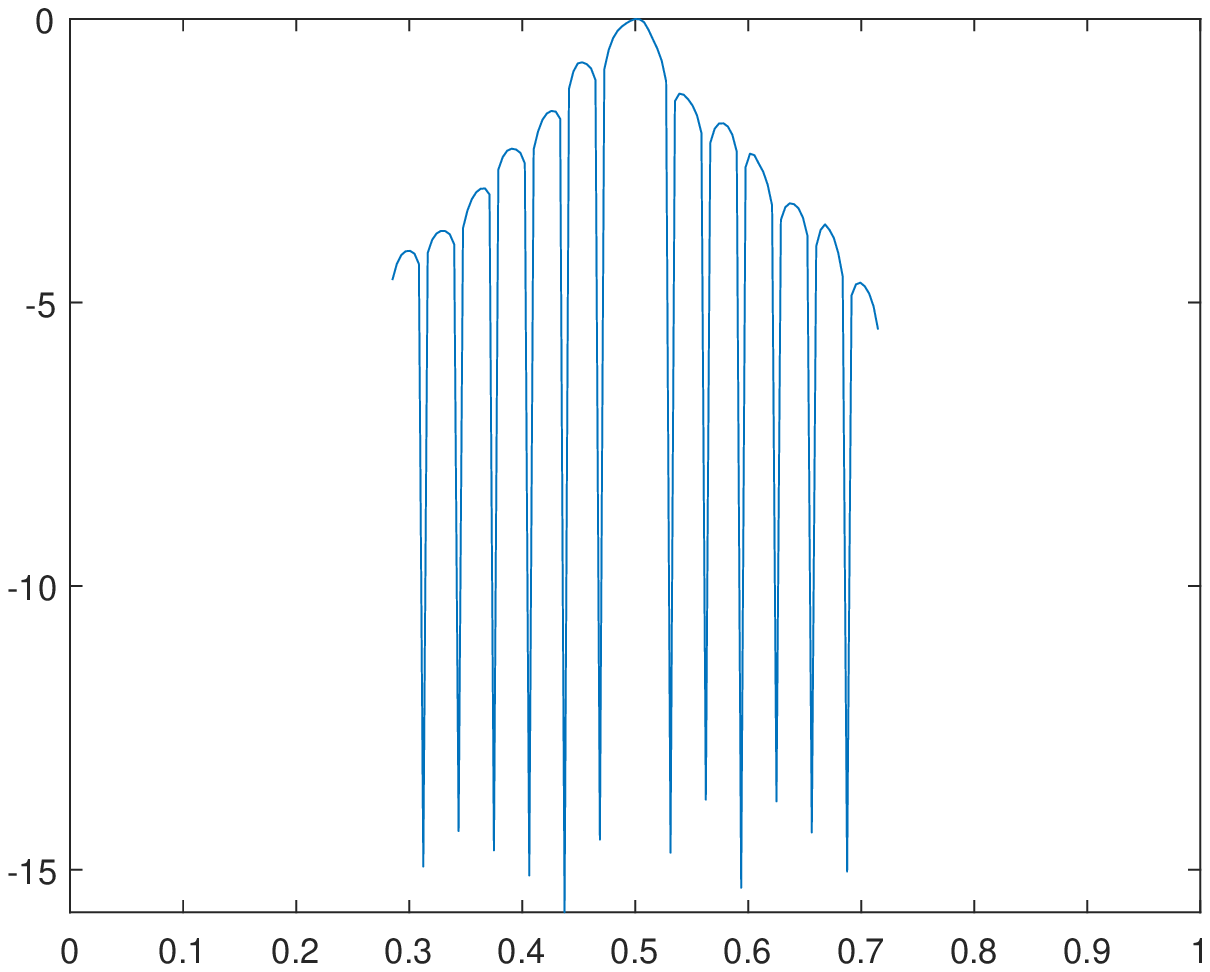}}
\caption{The slice of the localized RPS basis $\phi_i^{l}$ along $x$ axis in $log_{10}$ scale for the node $i=481$ for various degrees of localization (i.e., for $l=1,4, 7$).
The coarse mesh $H=1/32$, the fine mesh $h=1/256$, namely, $Nc=32$ and $J=3$. }
\centering
\label{fig:localbasis}
\end{figure}

We compute the optimal control problem using Algorithm \ref{alg:main} with RPS space $V_H$ or GRPS space $\bar V_H$.

Figure \ref{fig:errloc} shows the relative error of $y_H$ and $p_H$  using GRPS $\bar V_H$, in $L^2$, $H^1$ and $L^{\infty}$ norm in the
$\log_{10}$ scale as a function of the number of layers $l$.

\begin{figure}[H]
\centering
\subfigure[$\|y-y_{H}\|$ in $L^2$, $H^1$ and $L^{\infty}$]{
\includegraphics[height = 3cm, width=0.45\textwidth]{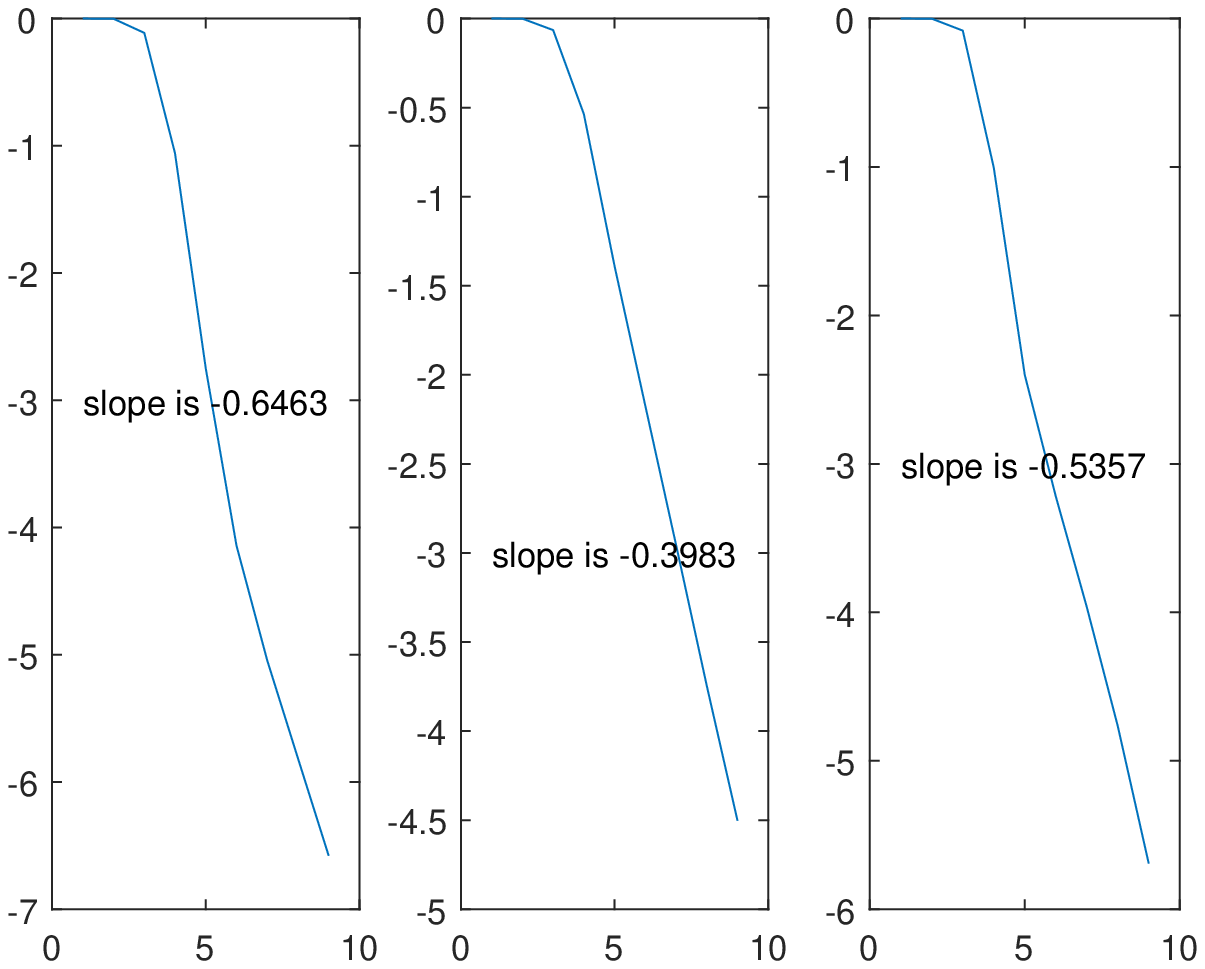}}
\subfigure[$\|p-p_{H}\|$ in $L^2$, $H^1$ and $L^{\infty}$]{
\includegraphics[height = 3cm, width=0.45\textwidth]{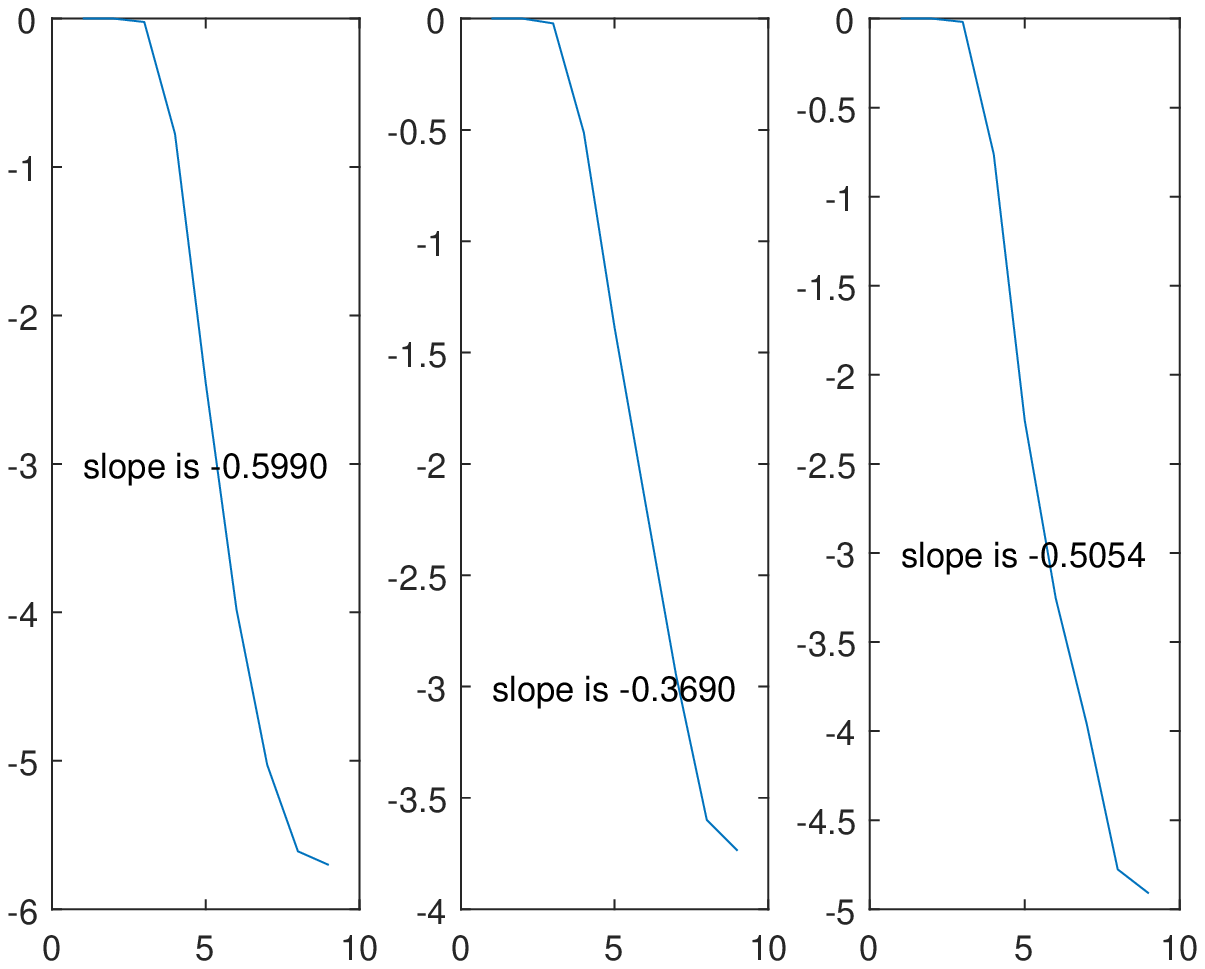}}
\caption{The error of $y_H$ and $p_H$ in $L^2$, $H^1$ and $L^{\infty}$ norms in the $\log_{10}$ scale as a function of $l$ (number of layers).
x-axis stands for $l$, y-axis stands for  the coarse mesh error estimates in $\log_{10}$ scales using GRPS basis. The coarse mesh size is $H=1/64$, the fine mesh size is $h=1/256$, namely, $N_c=64$ and $J=2$. }
\centering
\label{fig:errloc}
\end{figure}

Figure \ref{fig:coarseerr}(a), (b) and (c) show the relative errors of
$\|y-y_{H}\|_{H^1(\Omega)}$, $\|p-p_{H}\|_{H^1(\Omega)}$ and
$\|u-u_{H}\|_{L^2(\Omega)}$ in $\log_{10}$ scale with respect to the number of layers $l = 1, \cdots, 9$, and (d), (e), (f) show the relative error with respect to coarse \dof. Note that the fine mesh is fixed with $h=1/256$.

\begin{figure}[H]
\centering
\subfigure[$\|y-y_{H}\|_{H^1(\Omega)}$ in log scale]{
\includegraphics[width=0.31\textwidth]{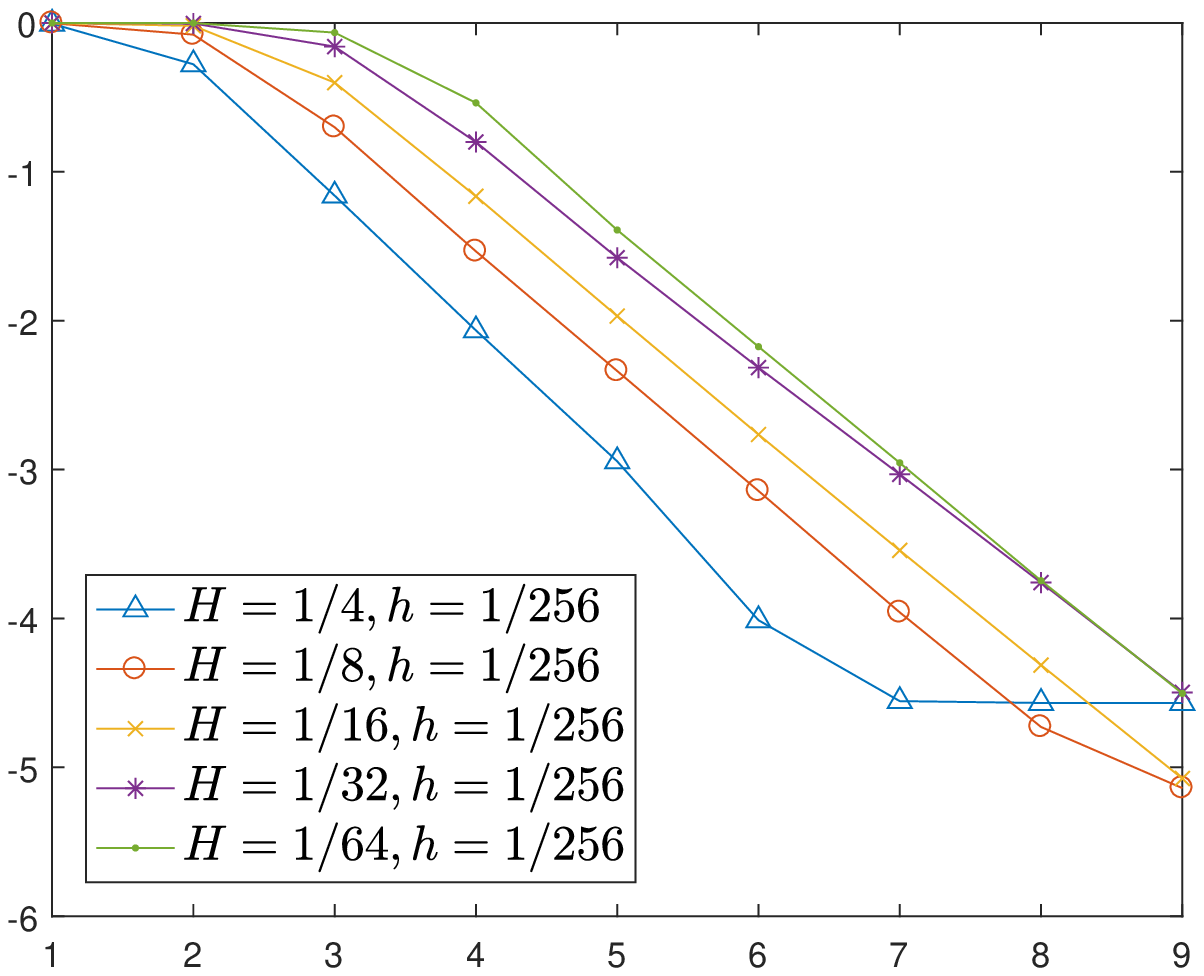}}
\subfigure[$\|p-p_{H}\|_{H^1(\Omega)}$ in log scale]{
\includegraphics[width=0.31\textwidth]{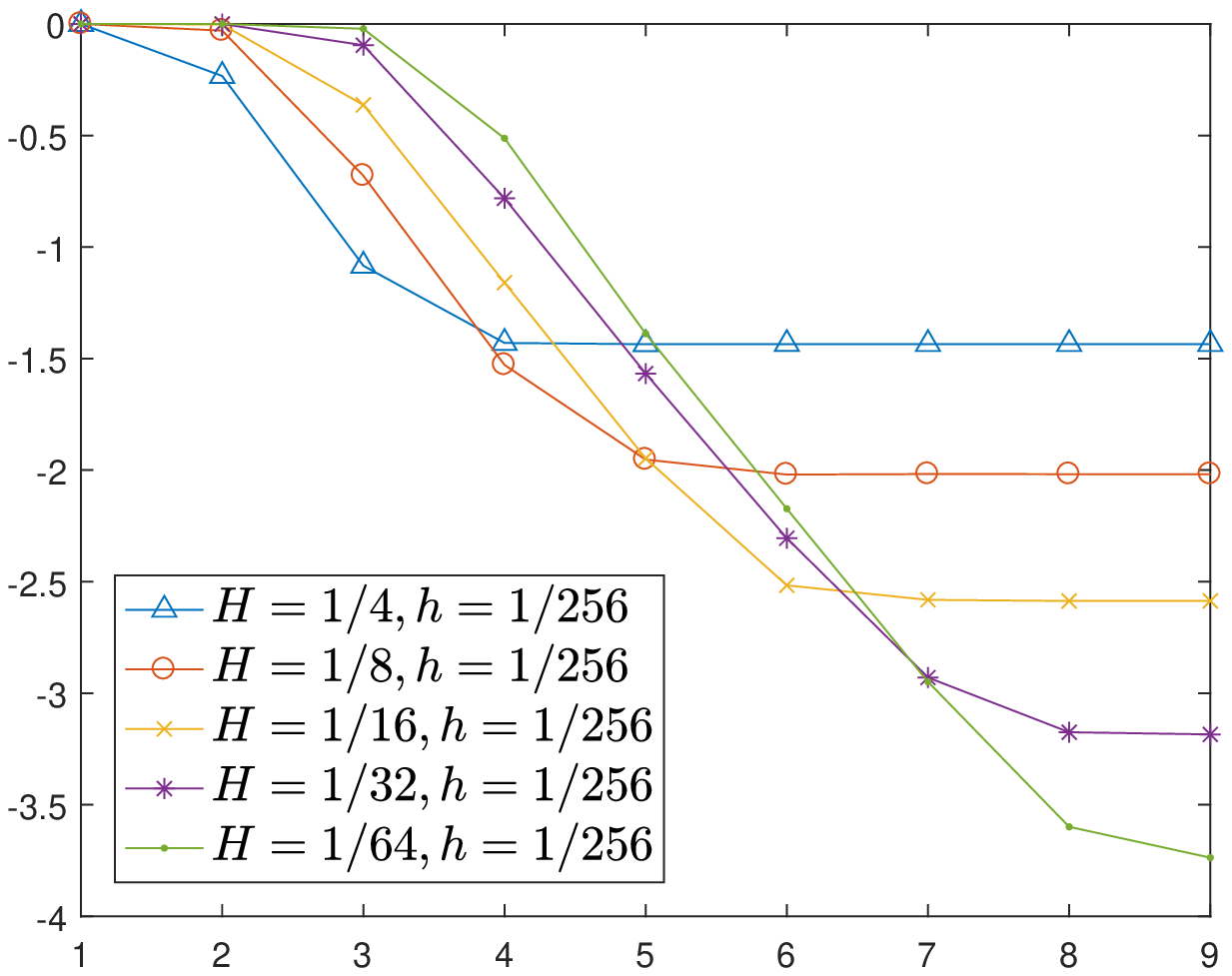}}
\subfigure[$\|u-u_{H}\|_{L^2(\Omega)}$ in log scale]{
\includegraphics[width=0.31\textwidth]{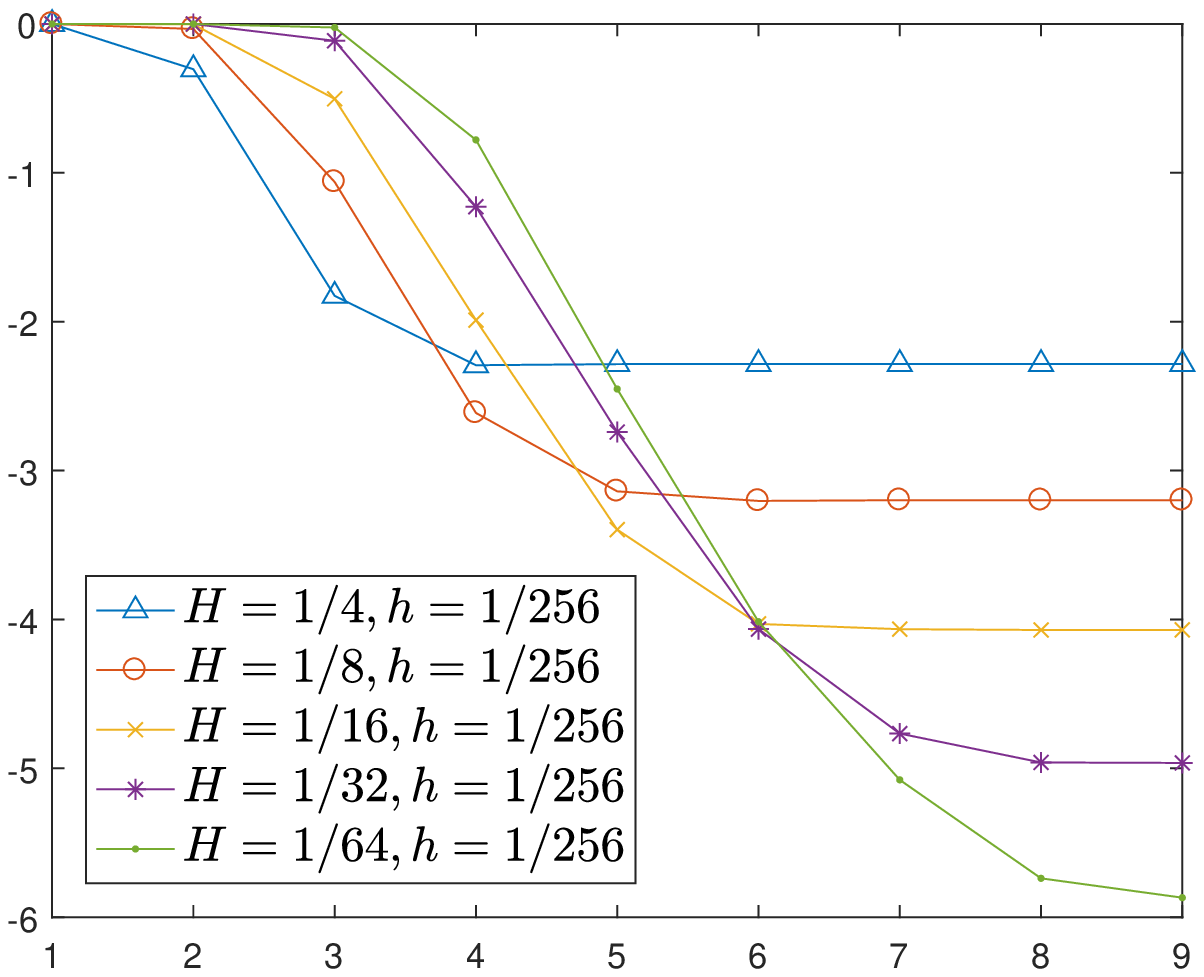}}

\subfigure[$\|y-y_{H}\|_{H^1(\Omega)}$ at different layers]{
\includegraphics[width=0.31\textwidth]{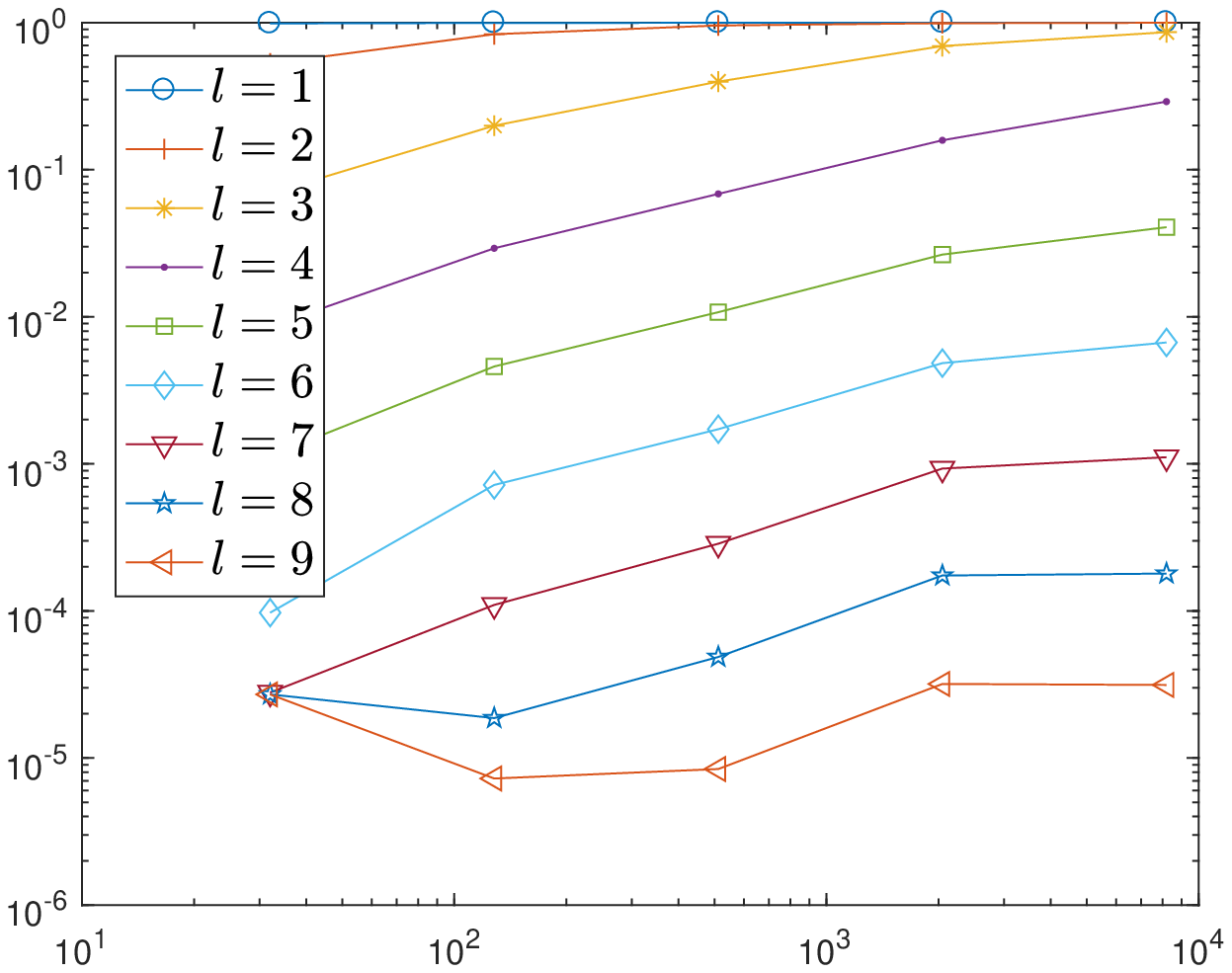}}
\subfigure[$\|p-p_{H}\|_{H^1(\Omega)}$ at different layers]{
\includegraphics[width=0.31\textwidth]{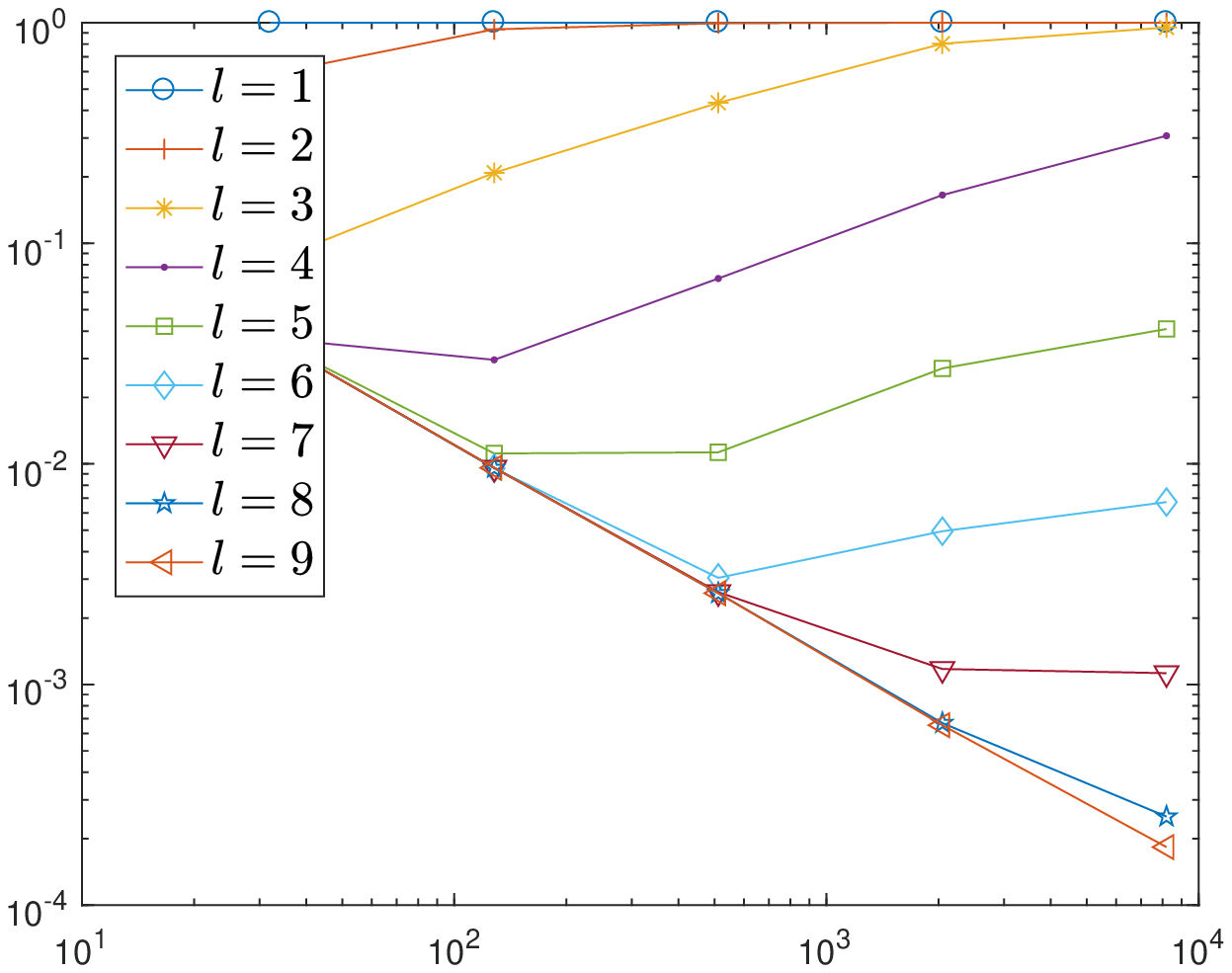}}
\subfigure[$\|u-u_{H}\|_{L^2(\Omega)}$ at different layers]{
\includegraphics[width=0.31\textwidth]{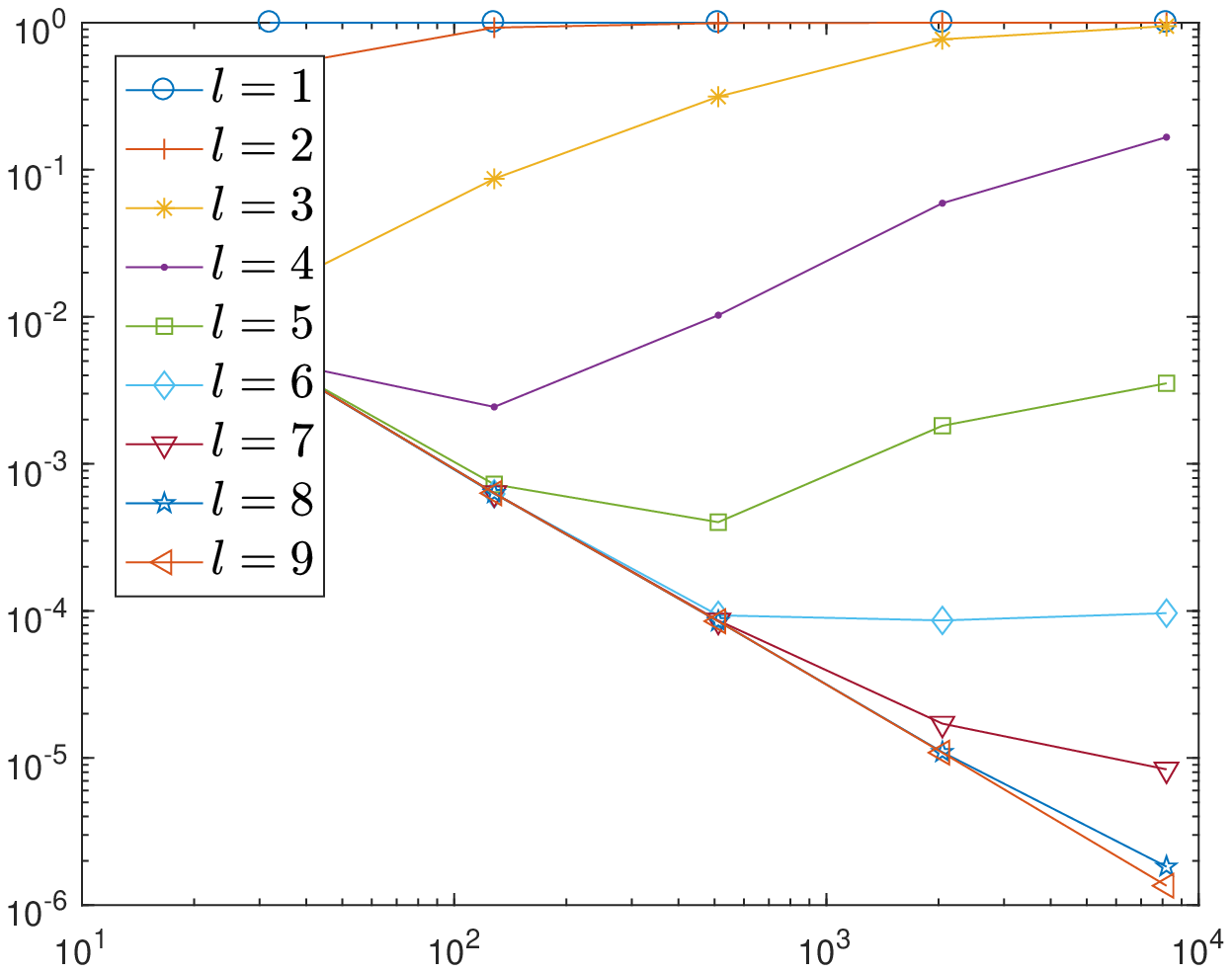}}
\caption{The relative error estimates of each variable using GRPS volume
basis.  In the sub-figure (a) (b) (c), x-axis stands for the layer $l$,
y-axis stands for the coarse mesh error estimates  in $\log_{10}$ scales.
Each curve stands for the relative  error estimations  as the function of
the number of layers with a fixed coarse mesh. In the sub-figure (d) (e)
(f), x-axis stands for the degrees of freedom of the coarse space $V_H$
which is also the number of basis functions (i.e. $2\Nc^2=[32,\ 128,\ 512,\
2048,\ 8192]$), y-axis stands for the coarse mesh error
estimates. Each curve stands for the relative errors
as the function of the coarse $\dof$ with a fixed number of
layers.
  } \centering
\label{fig:coarseerr}
\end{figure}

Figure \ref{fig:combinederr}(a)(b) shows that: with fixed $H$, when the number of layers increase ,
$\|y-y_{H}\|_1+\|p-p_{H}\|_1+\|u-u_{H}\|$  decreases first and then saturates;
as $H$ decrease, and $l \sim 1/H \log (1/H)$, we have the optimal convergence rate $O(H)$.
This is consistent with the Theorem \ref{thm:main}.

\begin{figure}[H]
\centering
\subfigure[$\|y-y_{H}\|_1+\|p-p_{H}\|_1+\|u-u_{H}\|$ with respect to $l$]{
\includegraphics[width=0.3\textwidth]{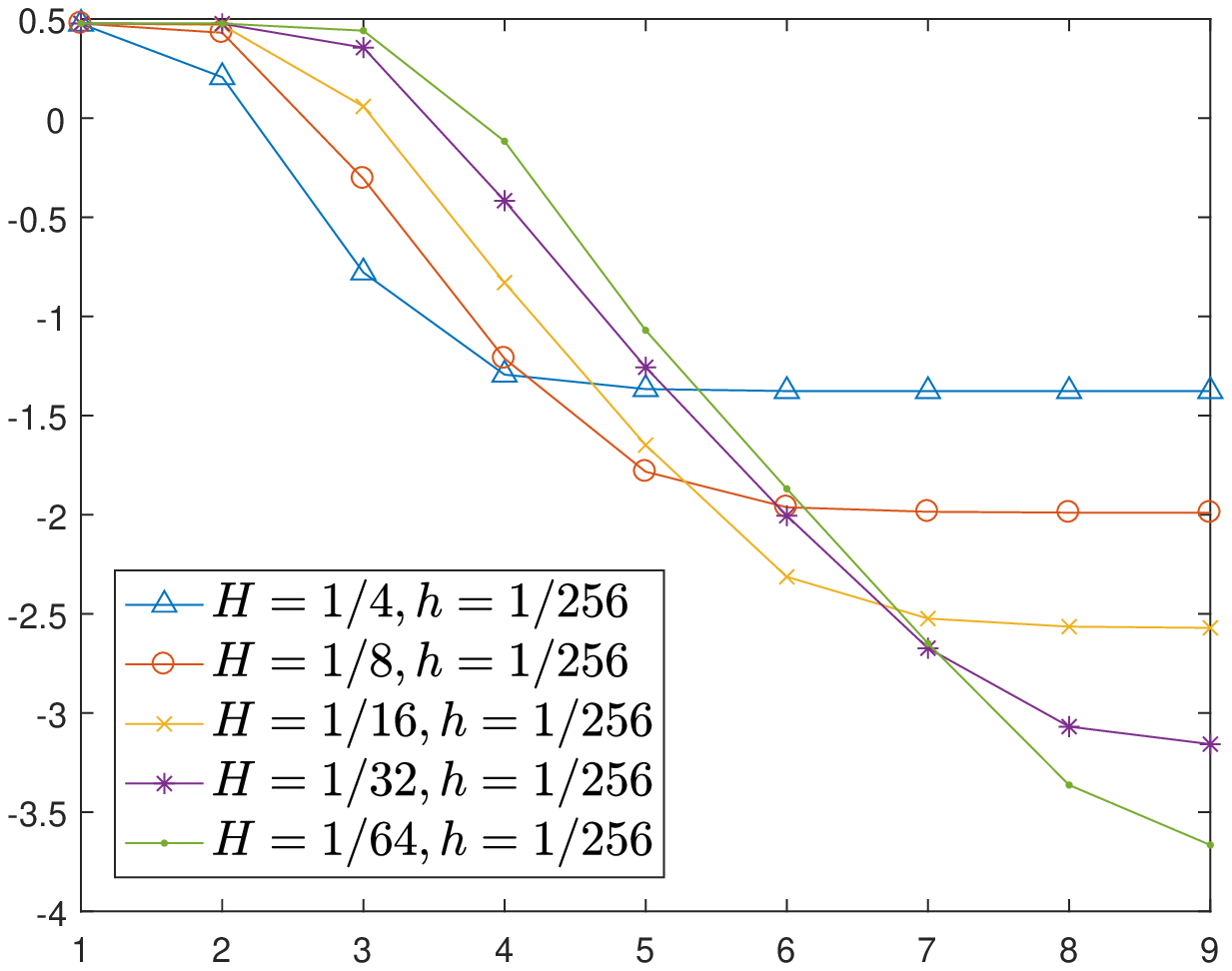}}
\subfigure[$\|y-y_{H}\|_1+\|p-p_{H}\|_1+\|u-u_{H}\|$ with respect to coarse $\dof$]{
\includegraphics[width=0.3\textwidth]{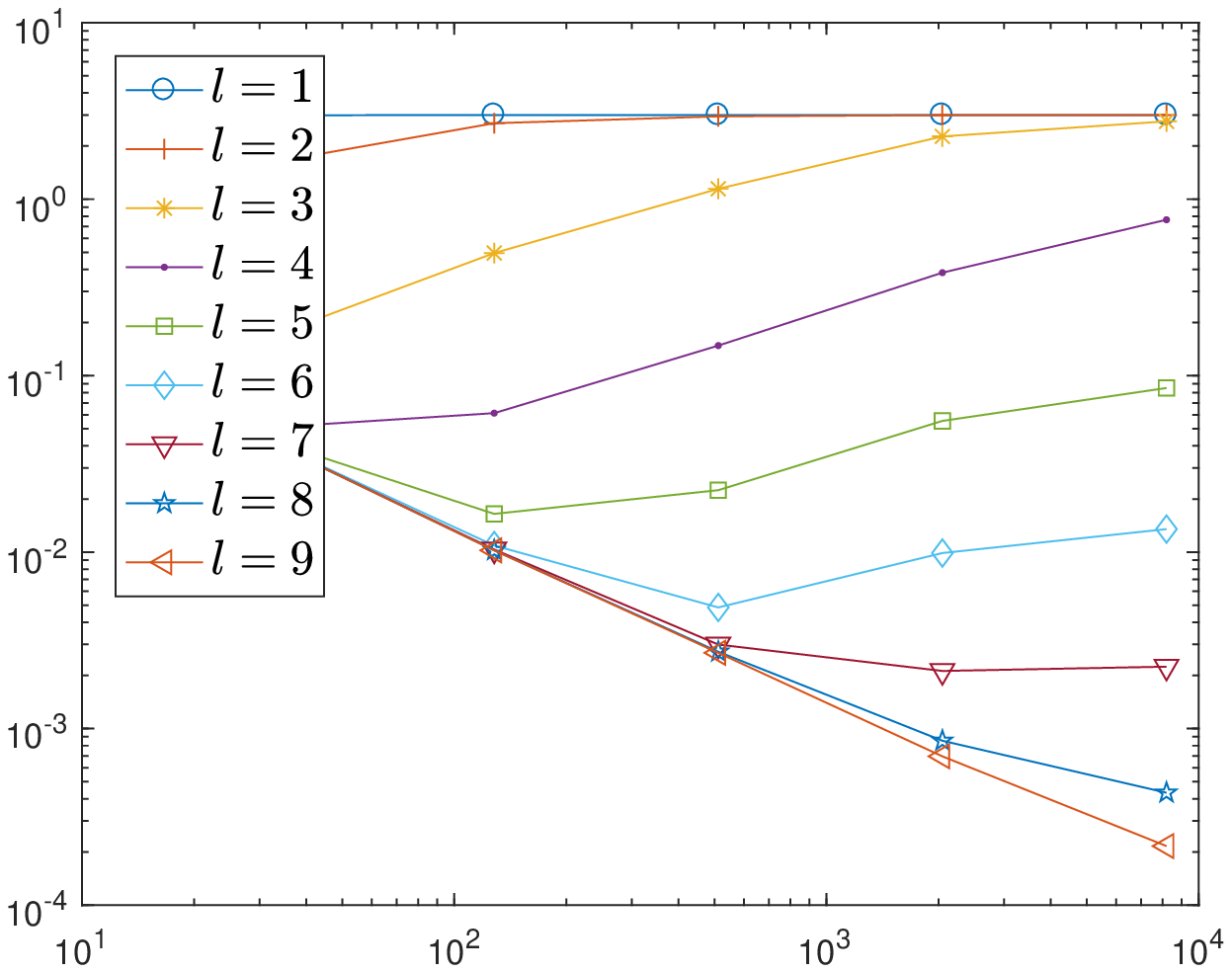}}
\caption{Relative error $\|y-y_{H}\|_1+\|p-p_{H}\|_1+\|u-u_{H}\|$ for GRPS solutions.}\centering
\label{fig:combinederr}
\end{figure}

For comparison with GRPS solution, we show in Figure \ref{RPS1} the relative errors of RPS solutions,  for each variable $y_H$, $p_H$ and $u_H$, and show in Figure \ref{RPS2} the combined error of the RPS solutions of the optimal control problem. It seems the numerical performance of RPS and GRPS are similar, and GRPS is a little more stable.

\begin{figure}[H]
\centering
\subfigure[$\|y-y_{H}\|_{H^1(\Omega)}$ in log scale]{
\includegraphics[width=0.3\textwidth]{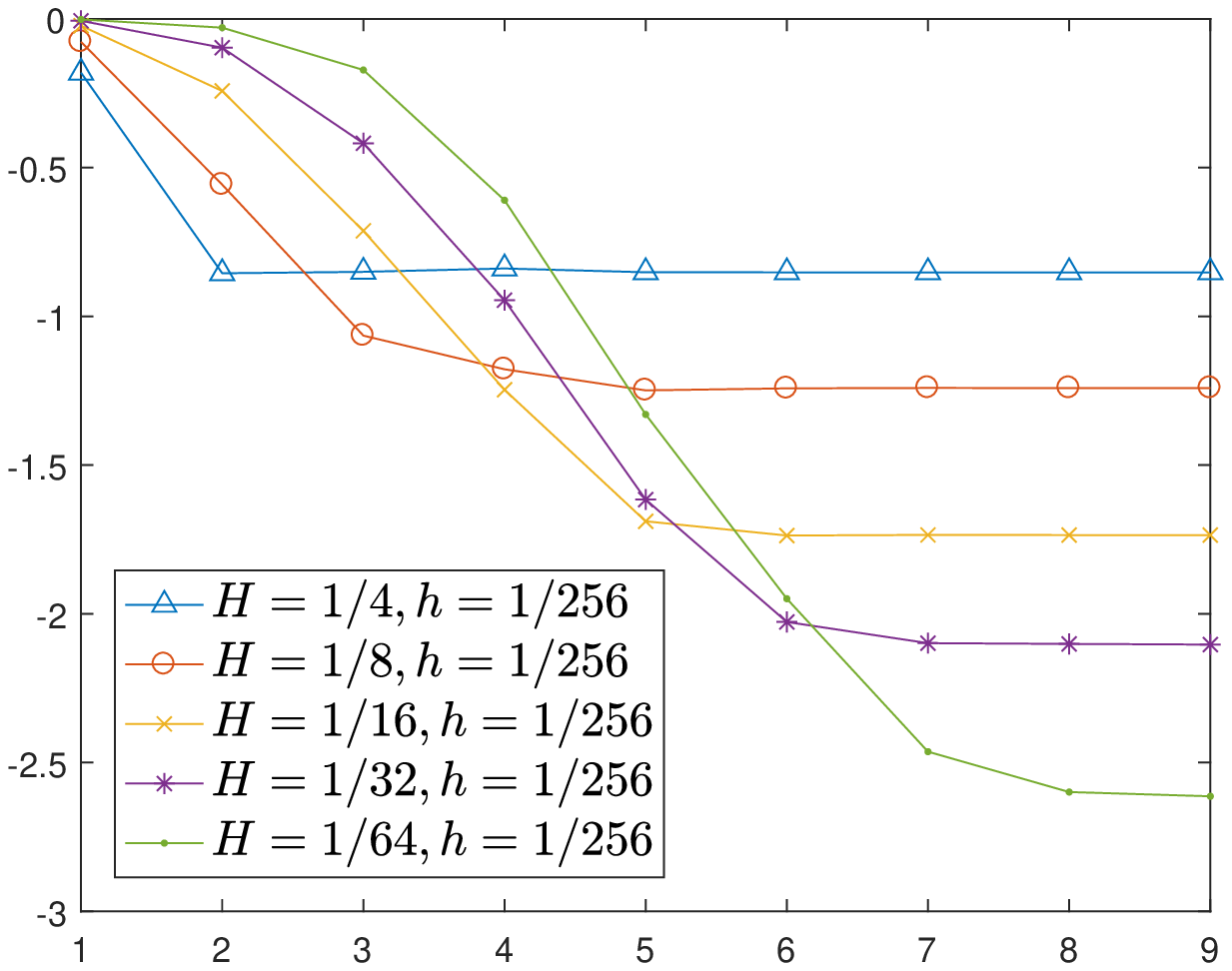}}
\subfigure[$\|p-p_{H}\|_{H^1(\Omega)}$ in log scale]{
\includegraphics[width=0.3\textwidth]{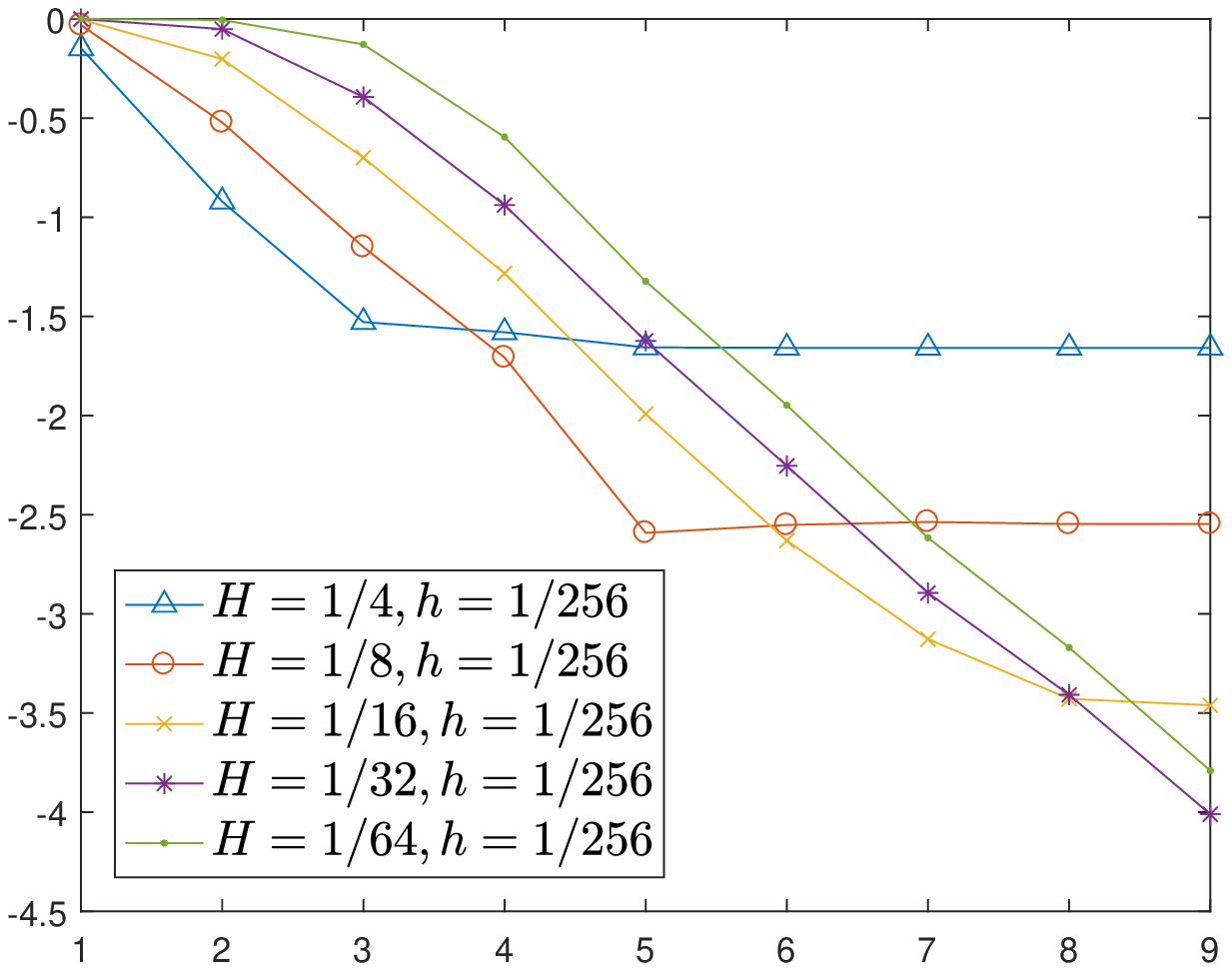}}
\subfigure[$\|u-u_{H}\|_{L^2(\Omega)}$ in log scale]{
\includegraphics[width=0.3\textwidth]{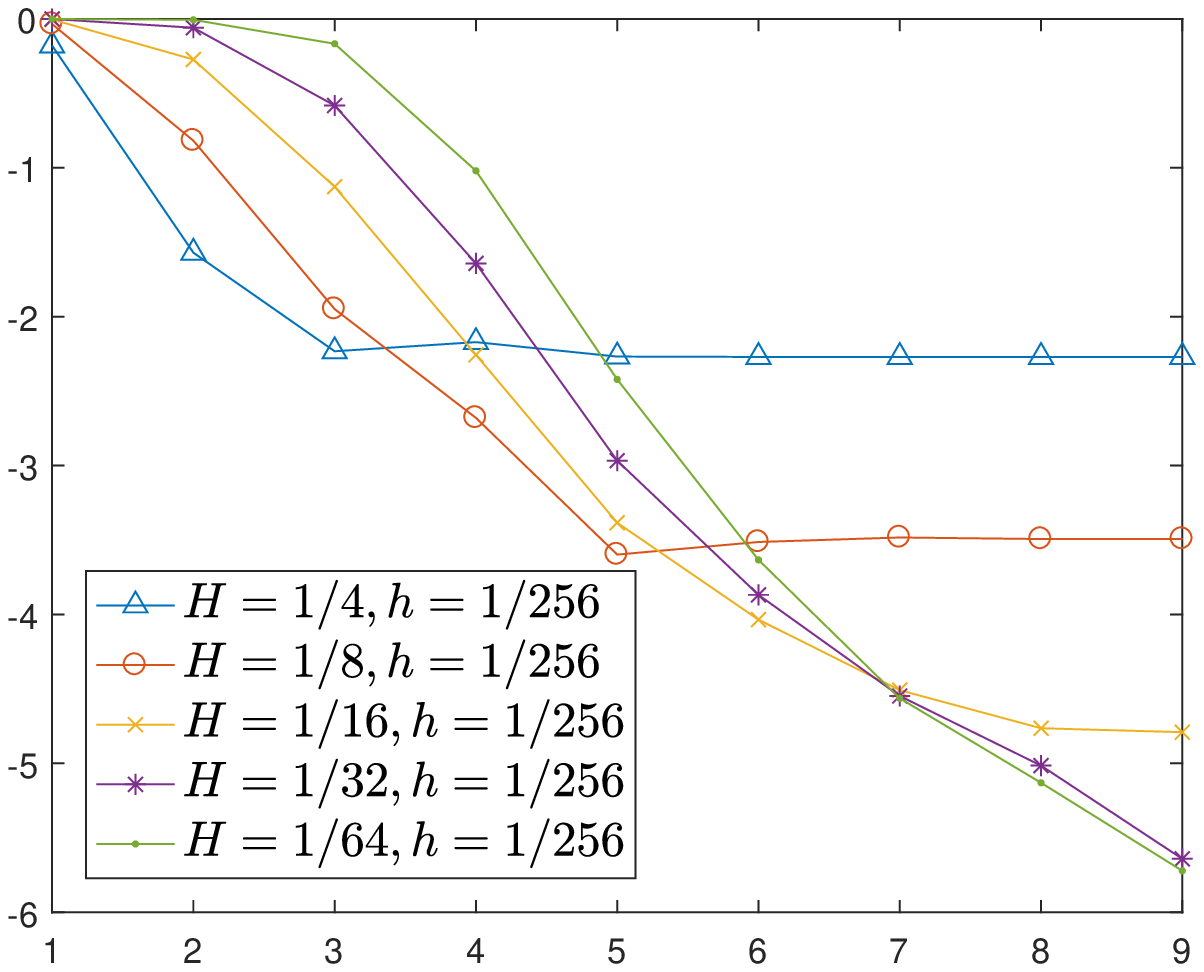}}

\subfigure[$\|y-y_{H}\|_{H^1(\Omega)}$ at different layer]{
\includegraphics[width=0.3\textwidth]{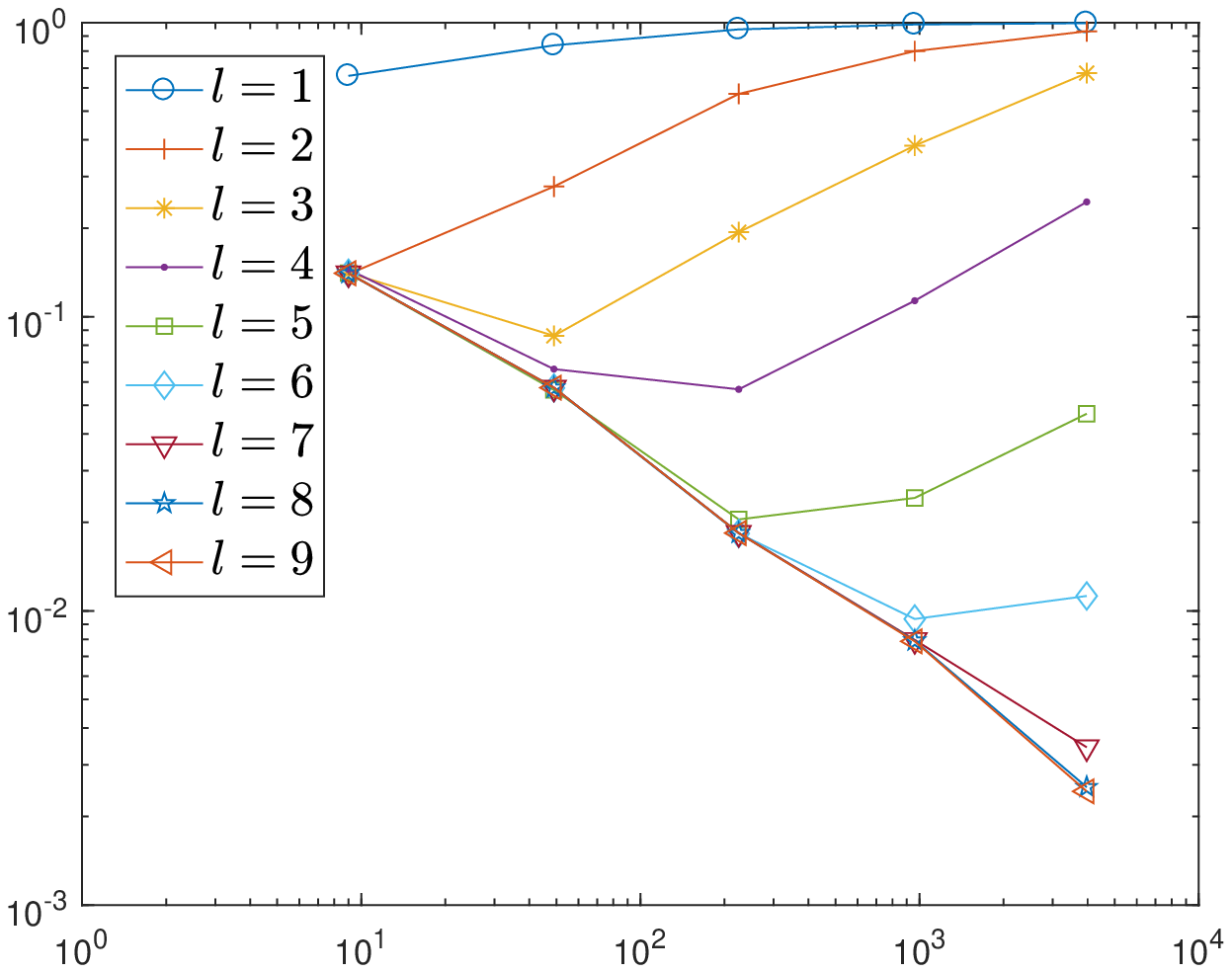}}
\subfigure[$\|p-p_{H}\|_{H^1(\Omega)}$ at different layer]{
\includegraphics[width=0.3\textwidth]{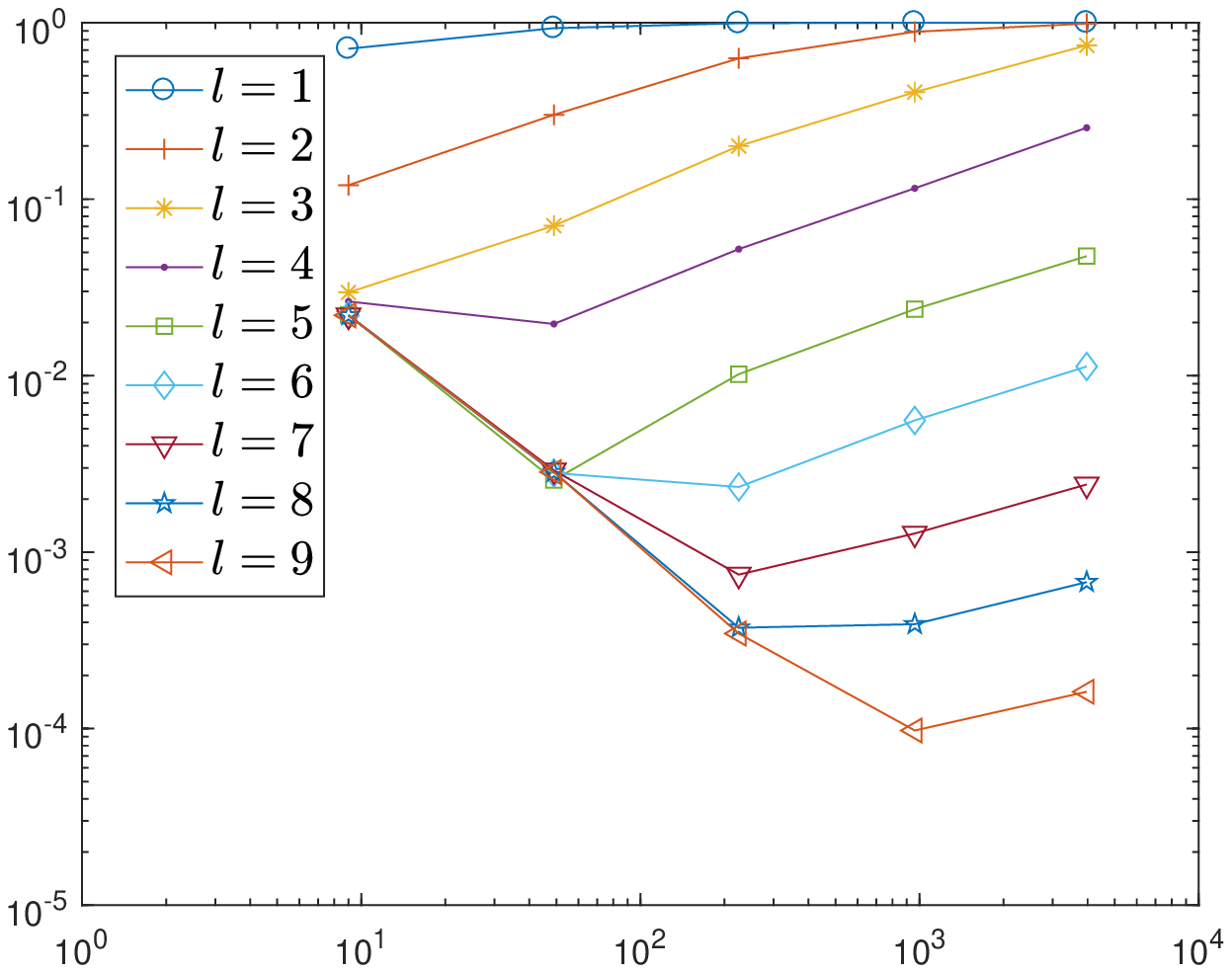}}
\subfigure[$\|u-u_{H}\|_{L^2(\Omega)}$ at different layer]{
\includegraphics[width=0.3\textwidth]{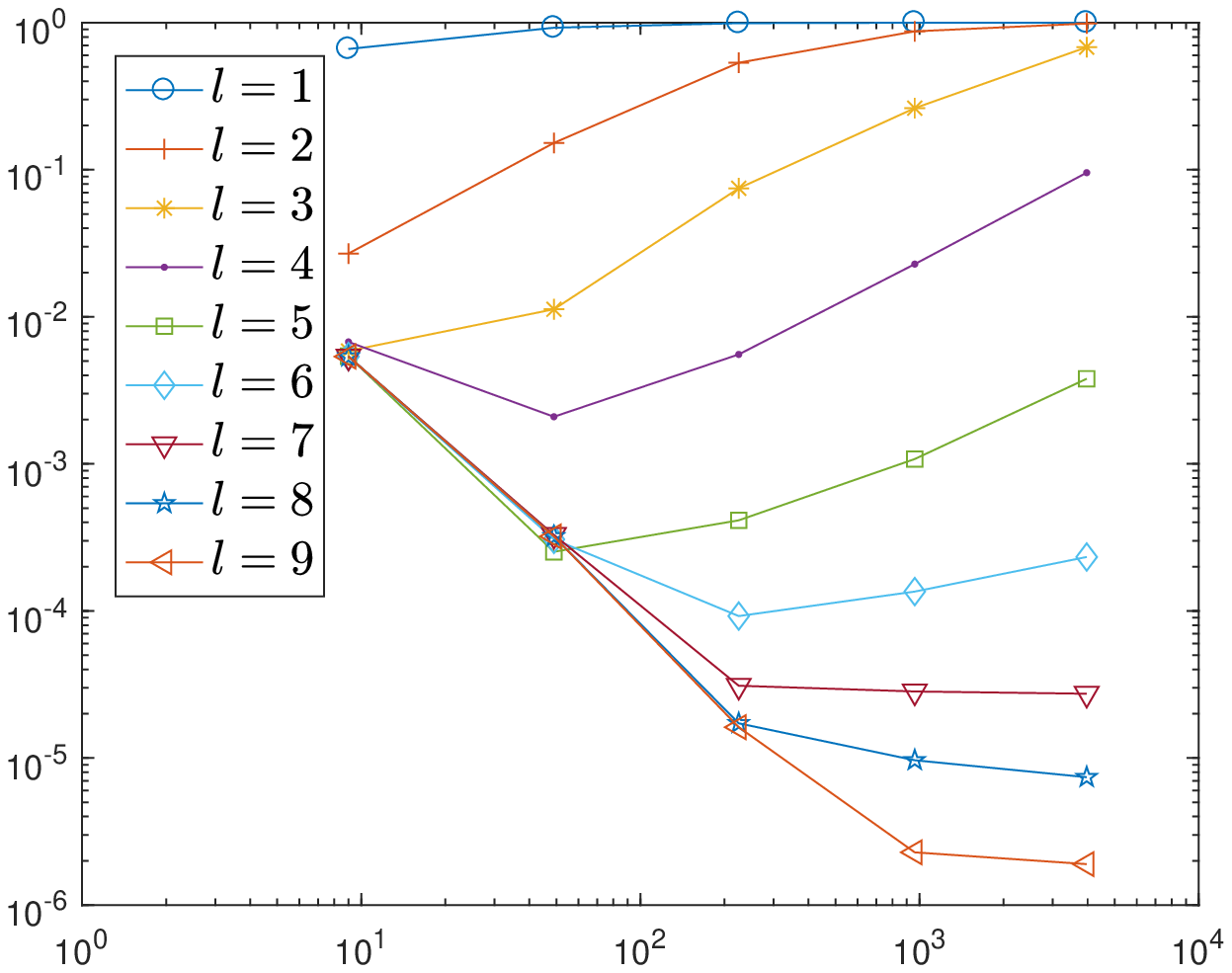}}
\caption{The relative error estimates of each variable using RPS
basis. In the sub-figure (a)(b)(c), x-axis stands for the layer $l$, y-axis
stands for  the coarse mesh error estimates in $\log_{10}$ scales. In the
sub-figure (d)(e)(f), x-axis  stands for the coarse $\dof$ (i.e.
$(N_c-1)^2=[ 9,\ 49,\ 225,\ 961,\ 3969]$), y-axis stands
for the coarse mesh error estimates. }\label{RPS1}
\centering
\end{figure}

\begin{figure}[H]
\centering
\subfigure[$\|y-y_{H}\|_1+\|p-p_{H}\|_1+\|u-u_{H}\|$  with respect to $l$]{
\includegraphics[width=0.3\textwidth]{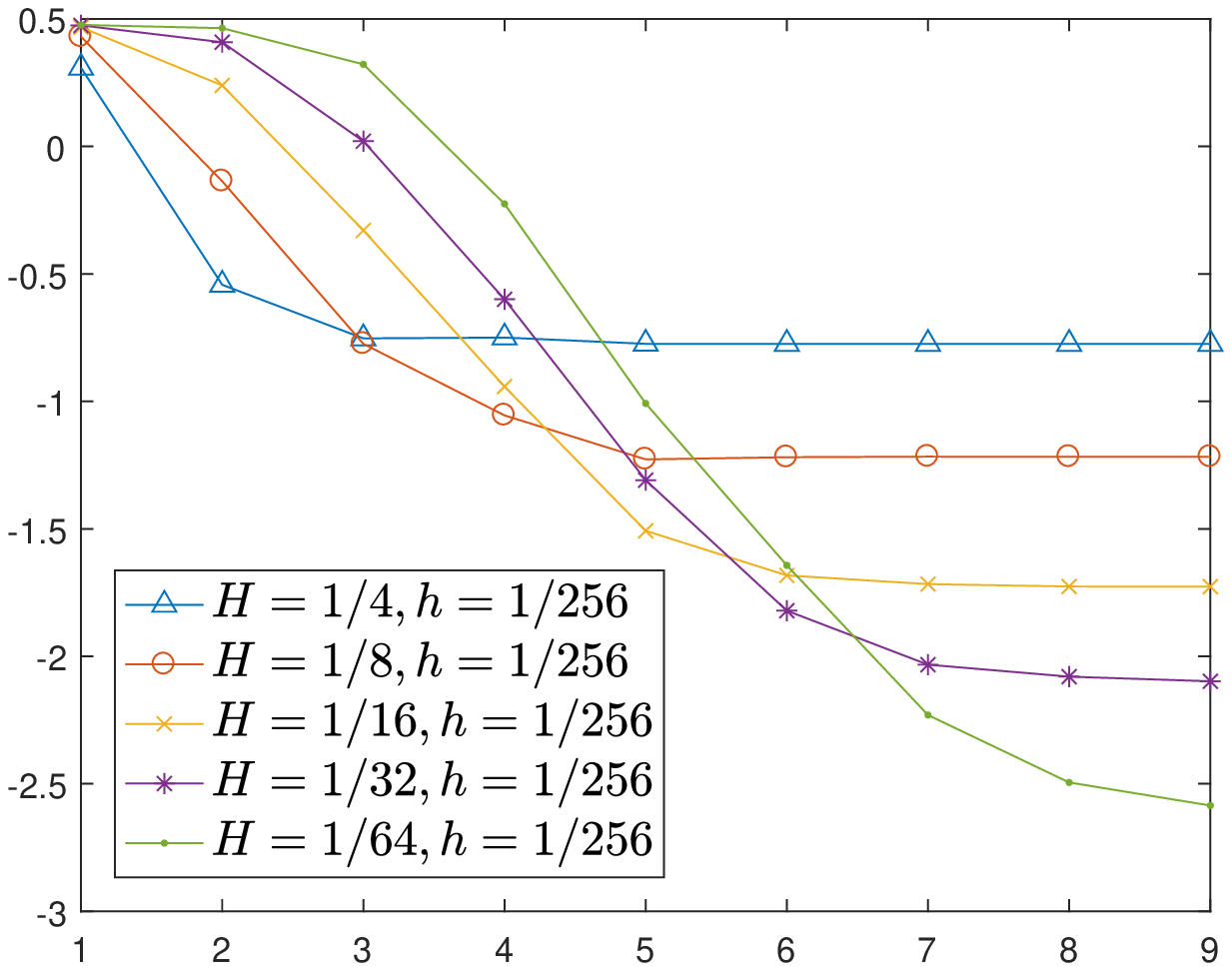}}
\subfigure[$\|y-y_{H}\|_1+\|p-p_{H}\|_1+\|u-u_{H}\|$ with respect to coarse $\dof$]{
\includegraphics[width=0.3\textwidth]{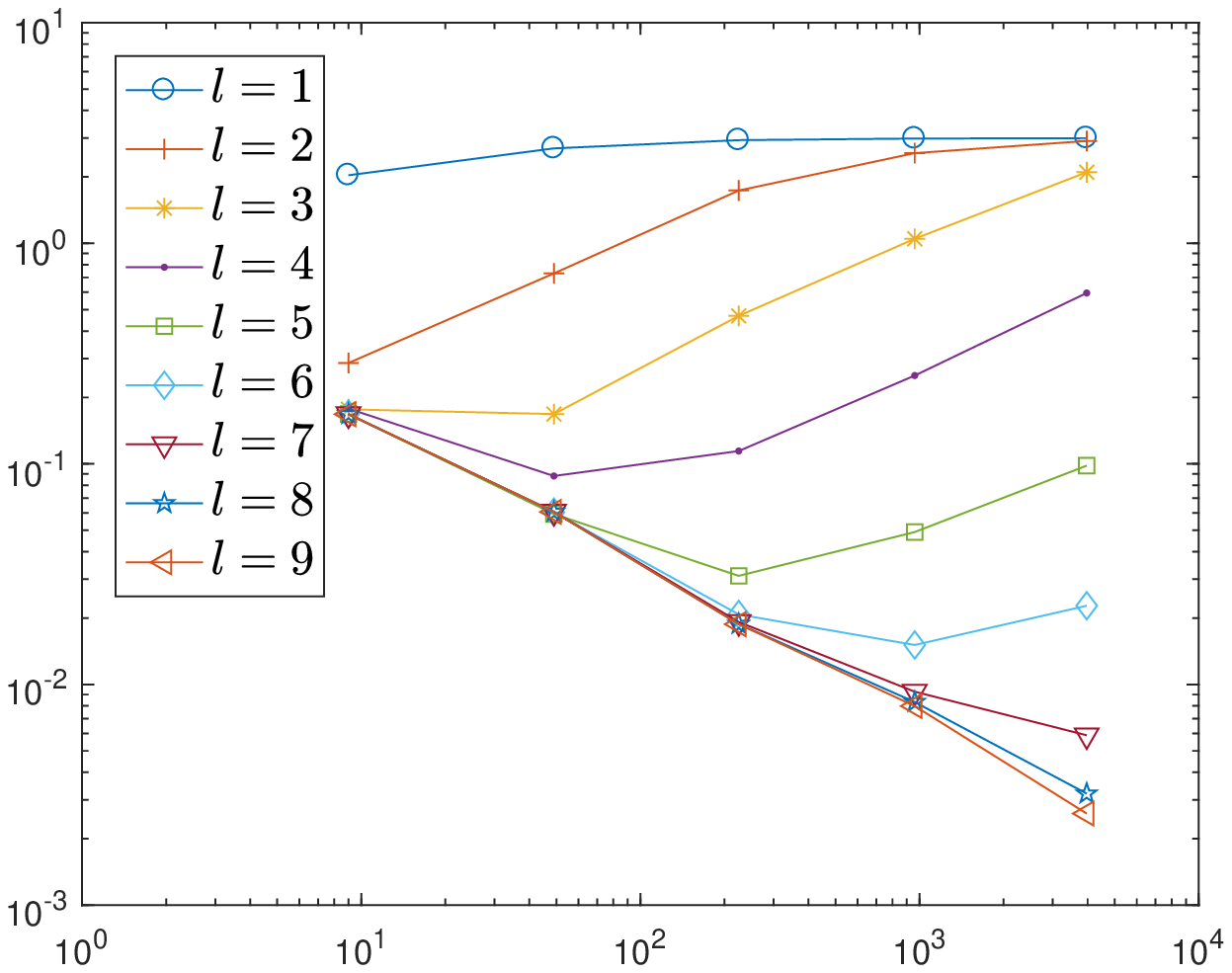}}
\caption{Relative error $\|y-y_{H}\|_1+\|p-p_{H}\|_1+\|u-u_{H}\|$ for RPS solutions.}\label{RPS2}
\centering
\end{figure}

\subsubsection{SPE 10}
\label{sec:SPE10}

The purpose of this section is to show the performance of GRPS method for a more practical example, namely, the SPE10 benchmark problem (\url{http://www.spe.org/web/csp/}).
SPE10 is the latest industry benchmark problems (SPE10) from the Society of Petroleum Engineers (SPE). The physical domain is a cube with dimension $[0,220]\times [0,60]\times [0,85]$. The
coefficients $a(x,y,z)$ are given as piecewise constant numerical values
rather than a continuous function. We select the layer 39 with
respect to the height dimension and treat them as coefficients of two dimensional problems. The contrast of the coefficients is $1.74956\times 10^{7}$. The domain is chosen as $\Omega=[0,2.2]\times [0,0.6]$. We first uniformly divide
$\Omega$ with the coarse mesh size $H$, then we further refine the mesh with the
fine mesh size $h$. In the numerical experiment, we choose $H=1/25,
1/50,1/100,1/200$, respectively. The fine mesh size is fixed as $h=1/800$. We illustrate the SPE10 coefficients in Figure \ref{fig:spe10coef}.

\vspace{-10pt}

\begin{figure}[H]
\centering
\subfigure[Layer 39  contours in $\log_{10}$ scale.]{\includegraphics[height = 3.5cm,width=0.45\textwidth]{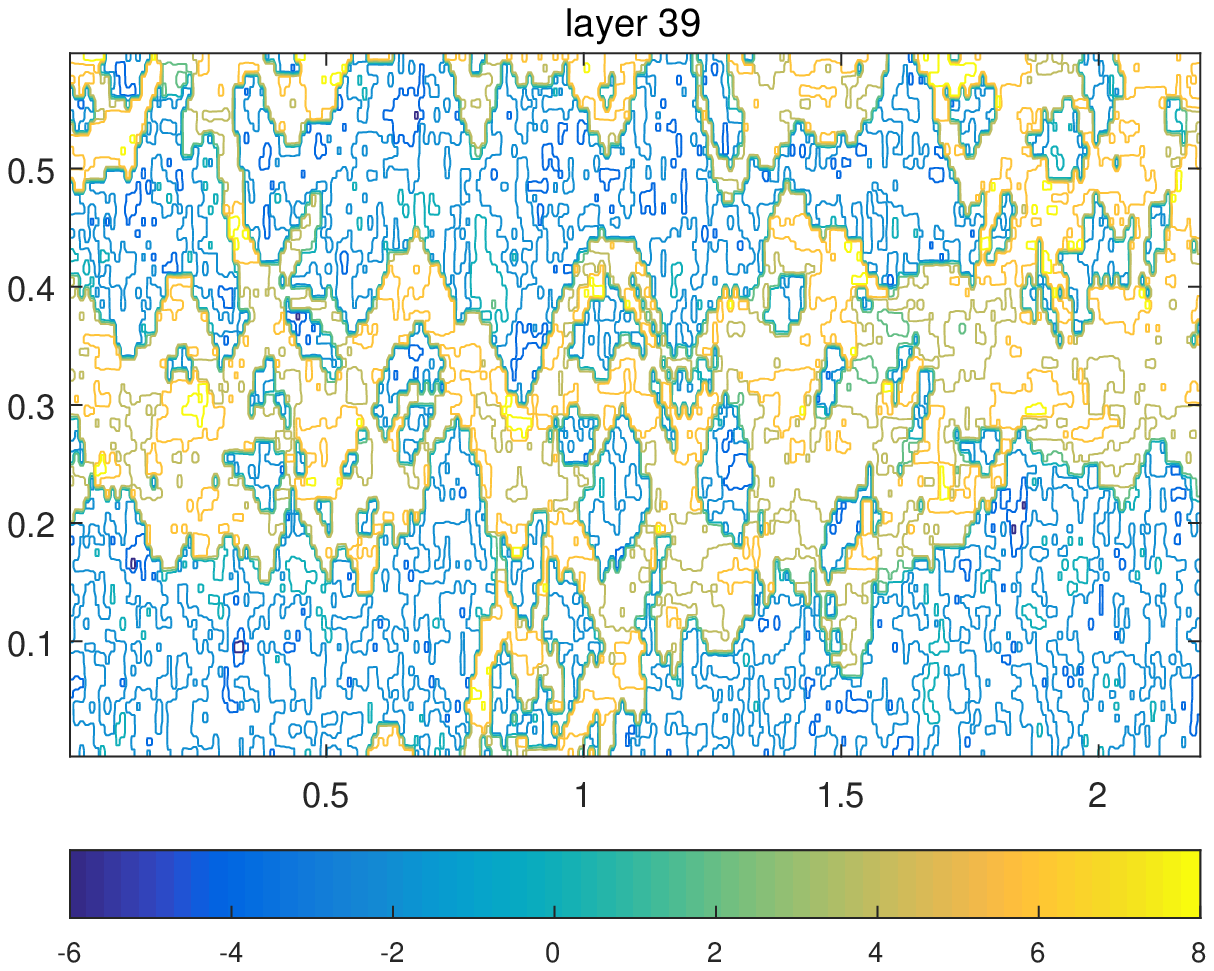}}
\subfigure[Layer 39.]{\includegraphics[width=0.4\textwidth]{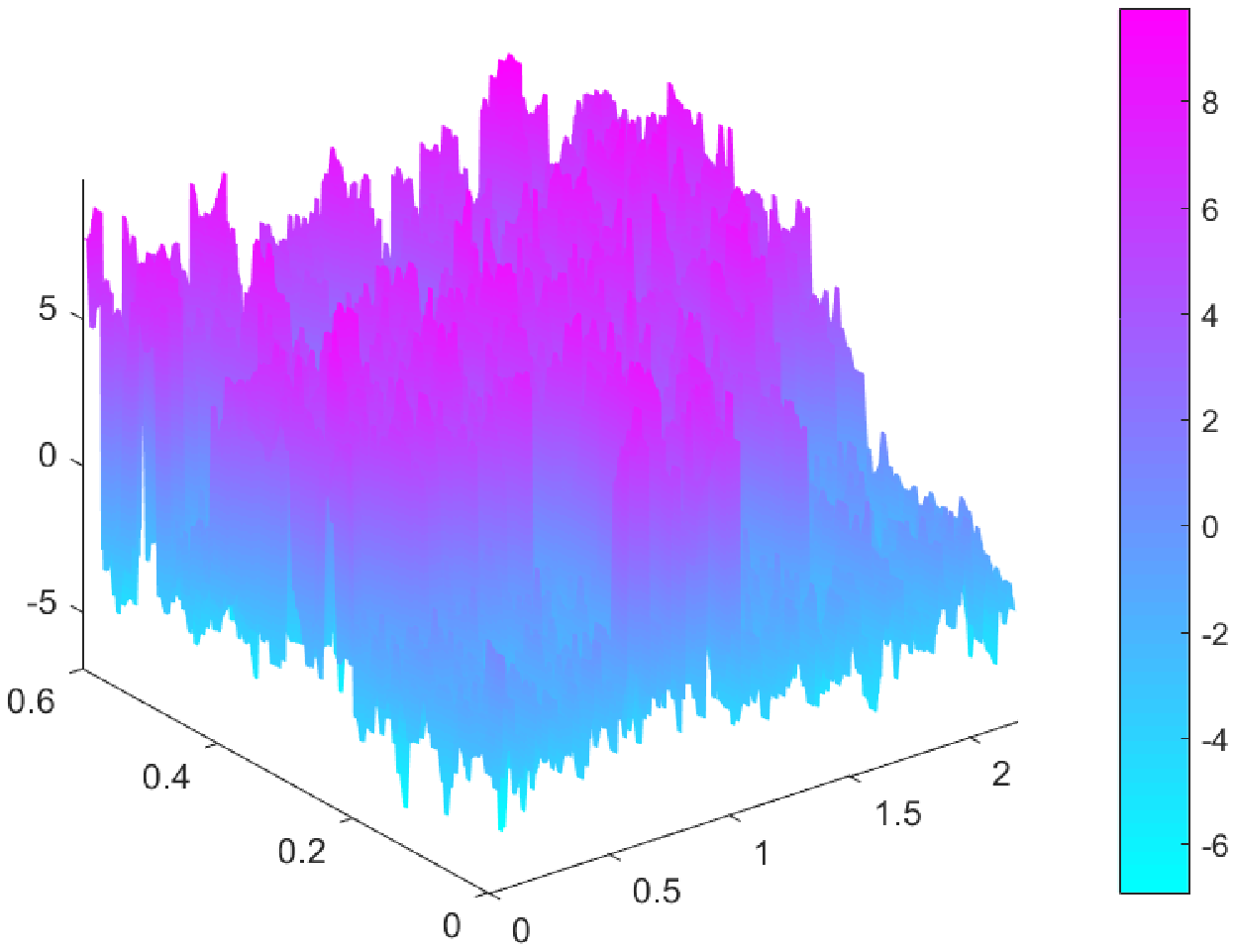}}
\caption{SPE10 coefficients}\label{fig:spe10coef}
\centering
\end{figure}

The errors of GRPS solutions are shown in Figures \ref{coarseerr1} and  \ref{coarseerr2}.

\begin{figure}[H]
\centering
\subfigure[$\|y-y_{H}\|_{1}$ SPE10 layer 39 ]{
\includegraphics[width=0.23\textwidth]{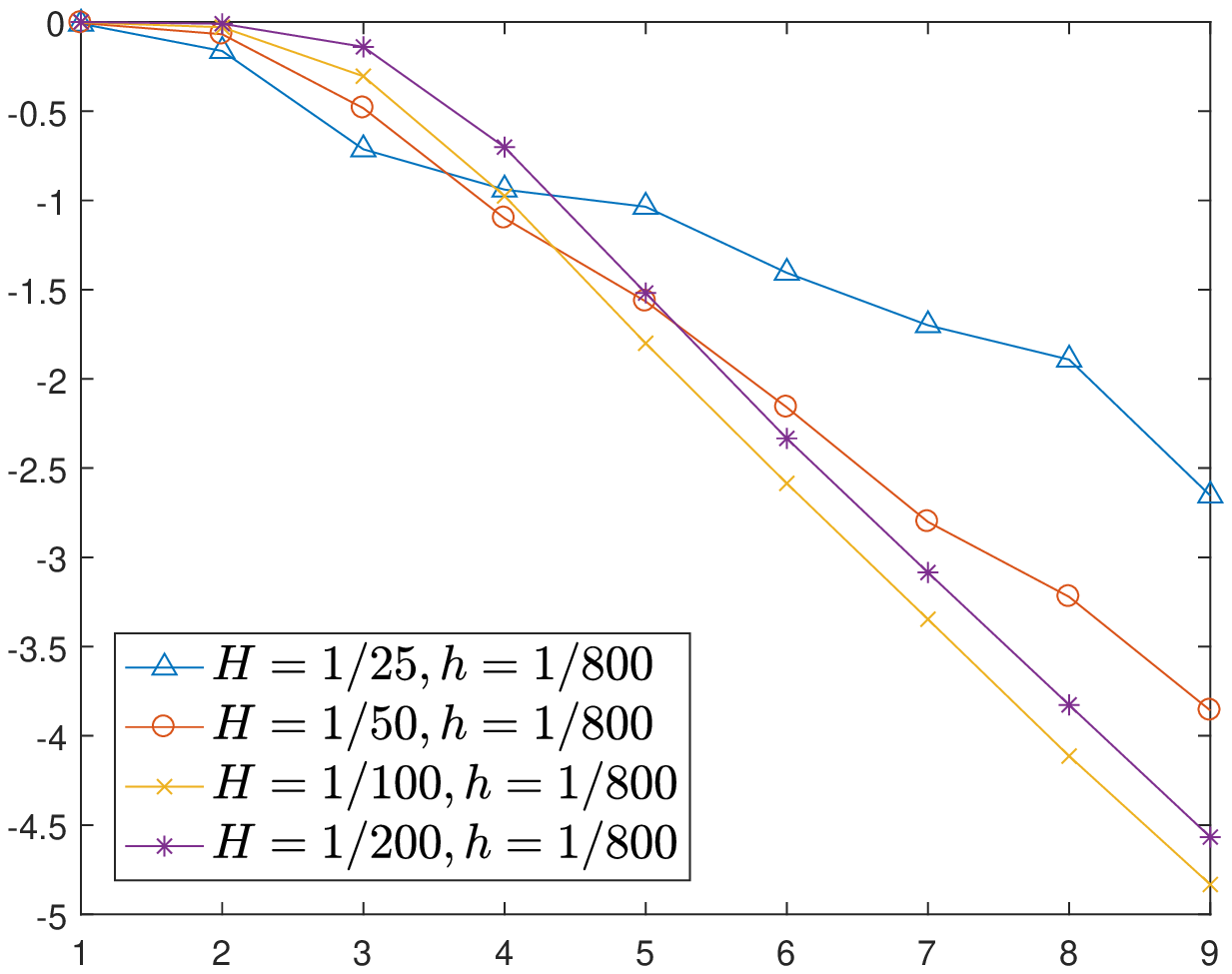}}
\subfigure[$\|p-p_{H}\|_{1}$ SPE10 layer 39 ]{
\includegraphics[width=0.23\textwidth]{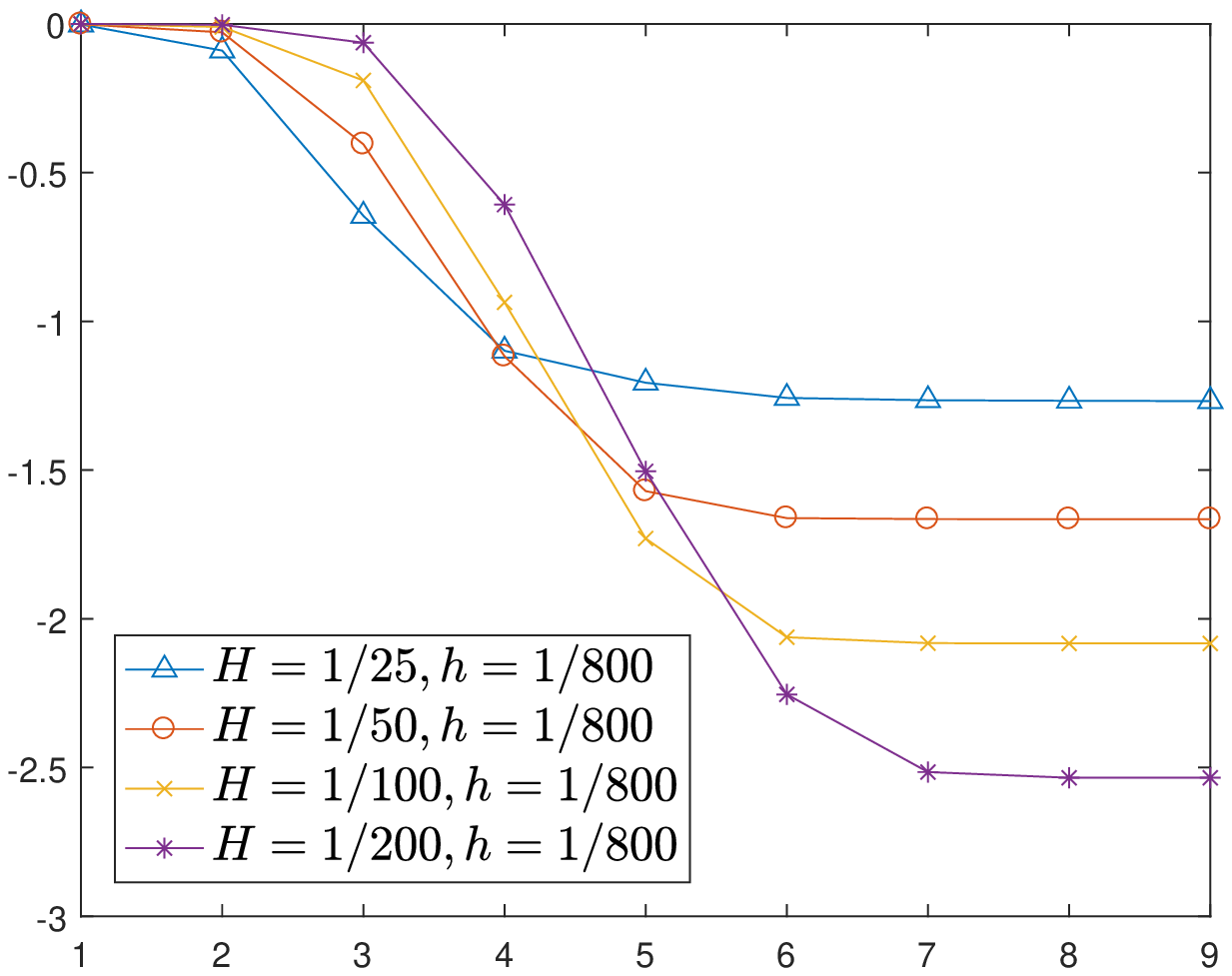}}
\subfigure[$\|u-u_{H}\|$ SPE10 layer 39]{
\includegraphics[width=0.23\textwidth]{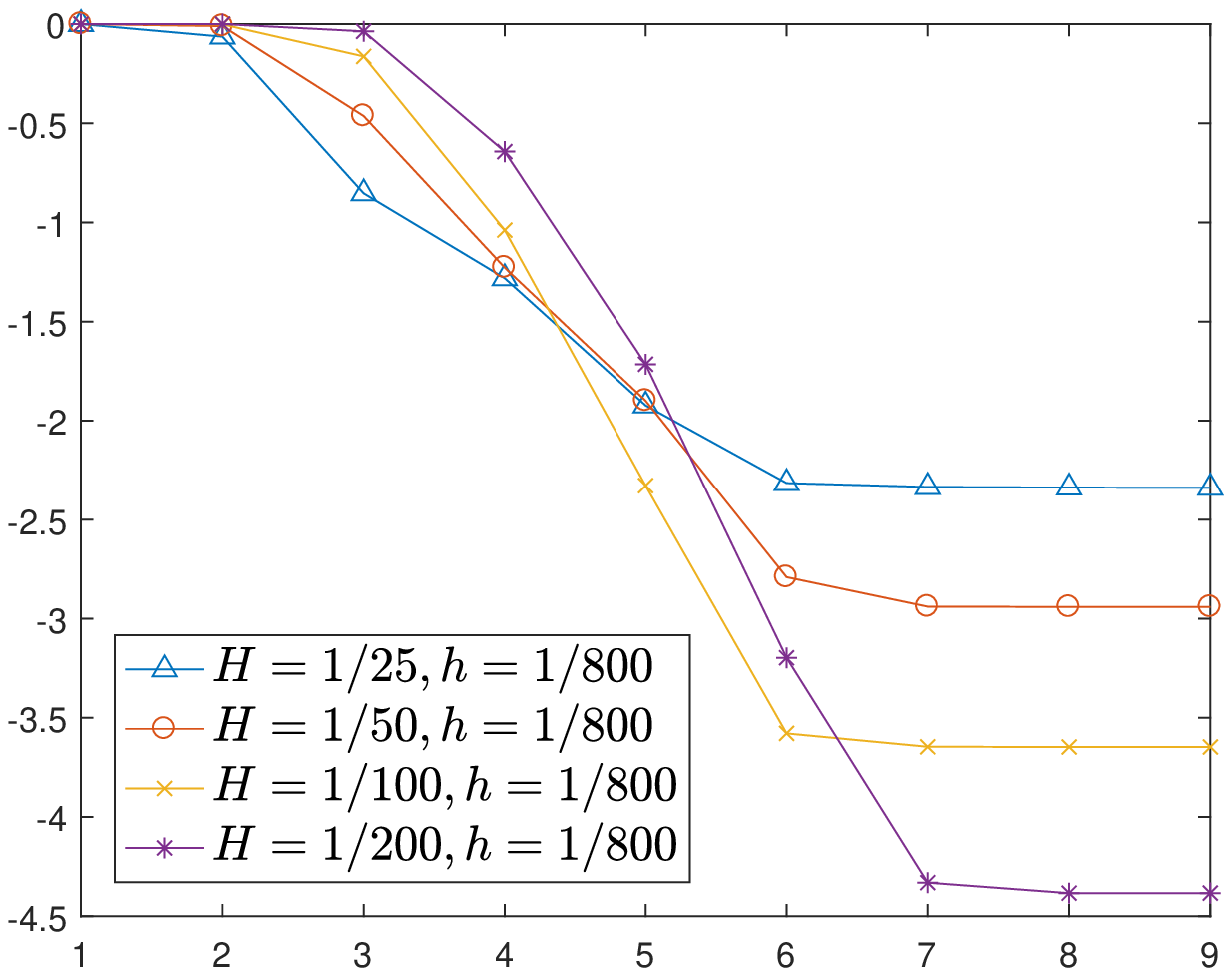}}
\subfigure[$\|y-y_{H}\|_{1}+\|p-p_{H}\|_{1}+\|u-u_{H}\|$ SPE10 layer 39]{
\includegraphics[width=0.23\textwidth]{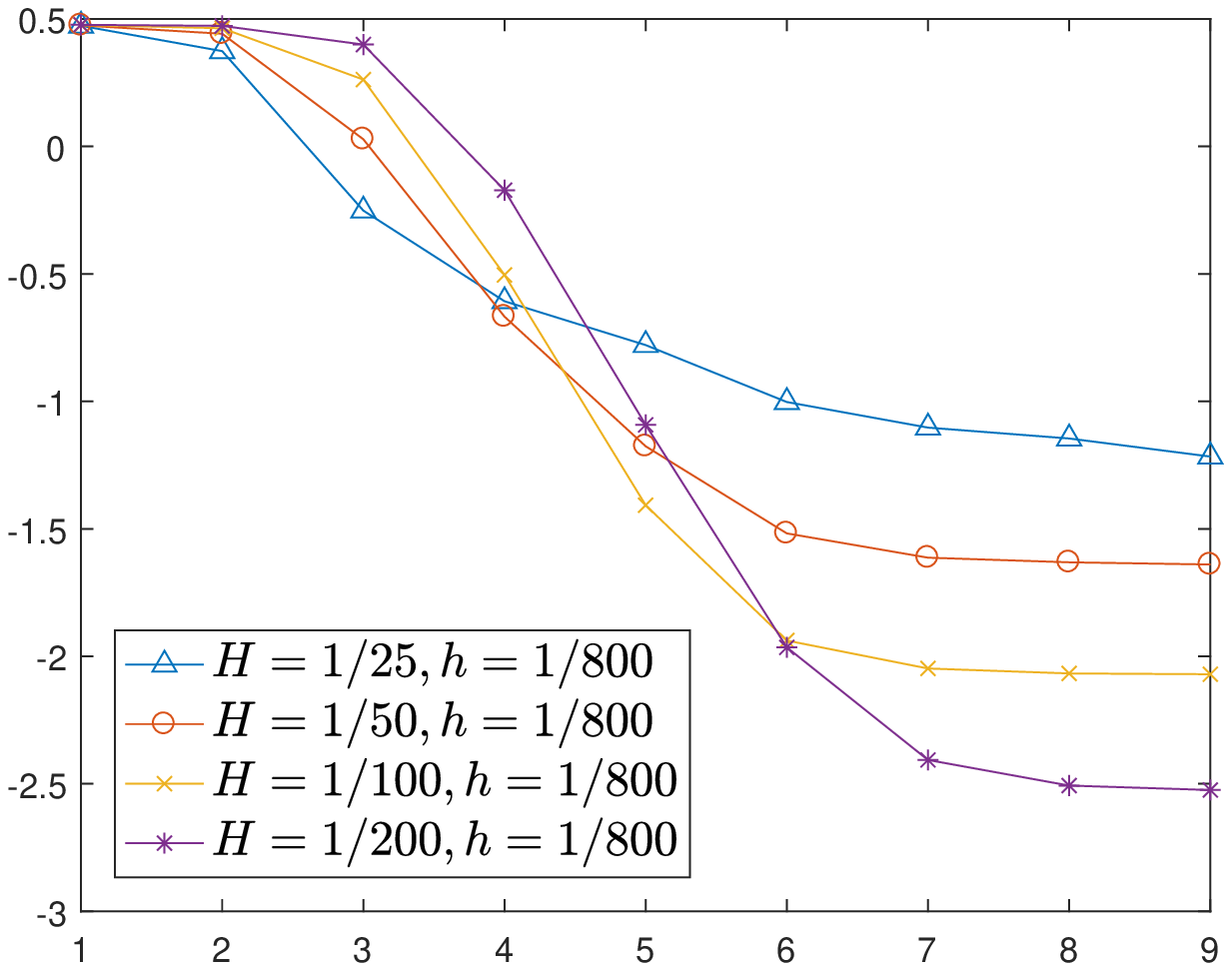}}
\caption{The relative errors of GPRS solutions $y_H$, $p_H$ and $u_H$ with respect to $l$.}\label{coarseerr1} \centering
\end{figure}

\begin{figure}[H]
\centering
\subfigure[$\|y-y_{H}\|_{1}$ SPE10 layer 39 ]{
\includegraphics[width=0.23\textwidth]{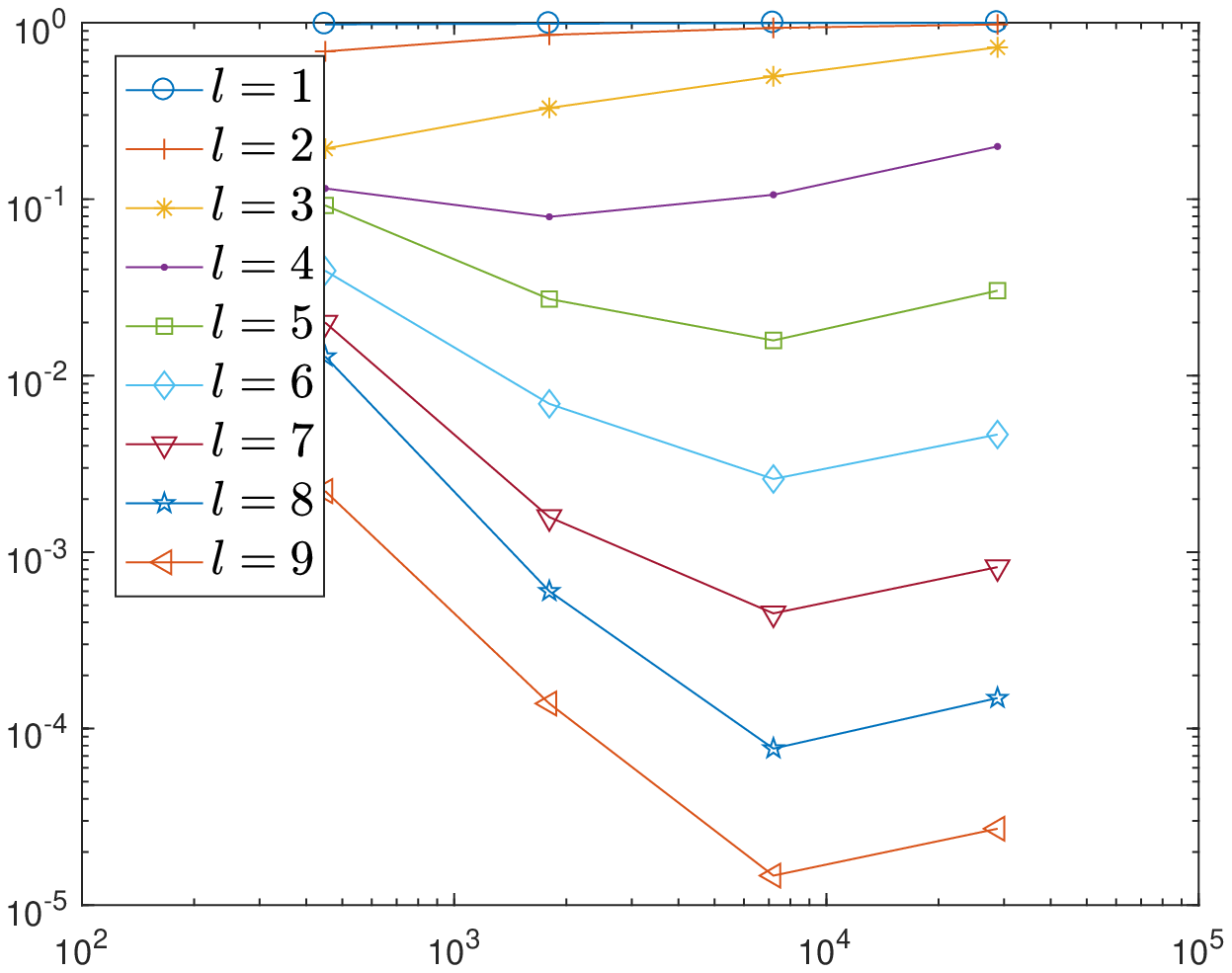}}
\subfigure[$\|p-p_{H}\|_{1}$ SPE10 layer 39 ]{
\includegraphics[width=0.23\textwidth]{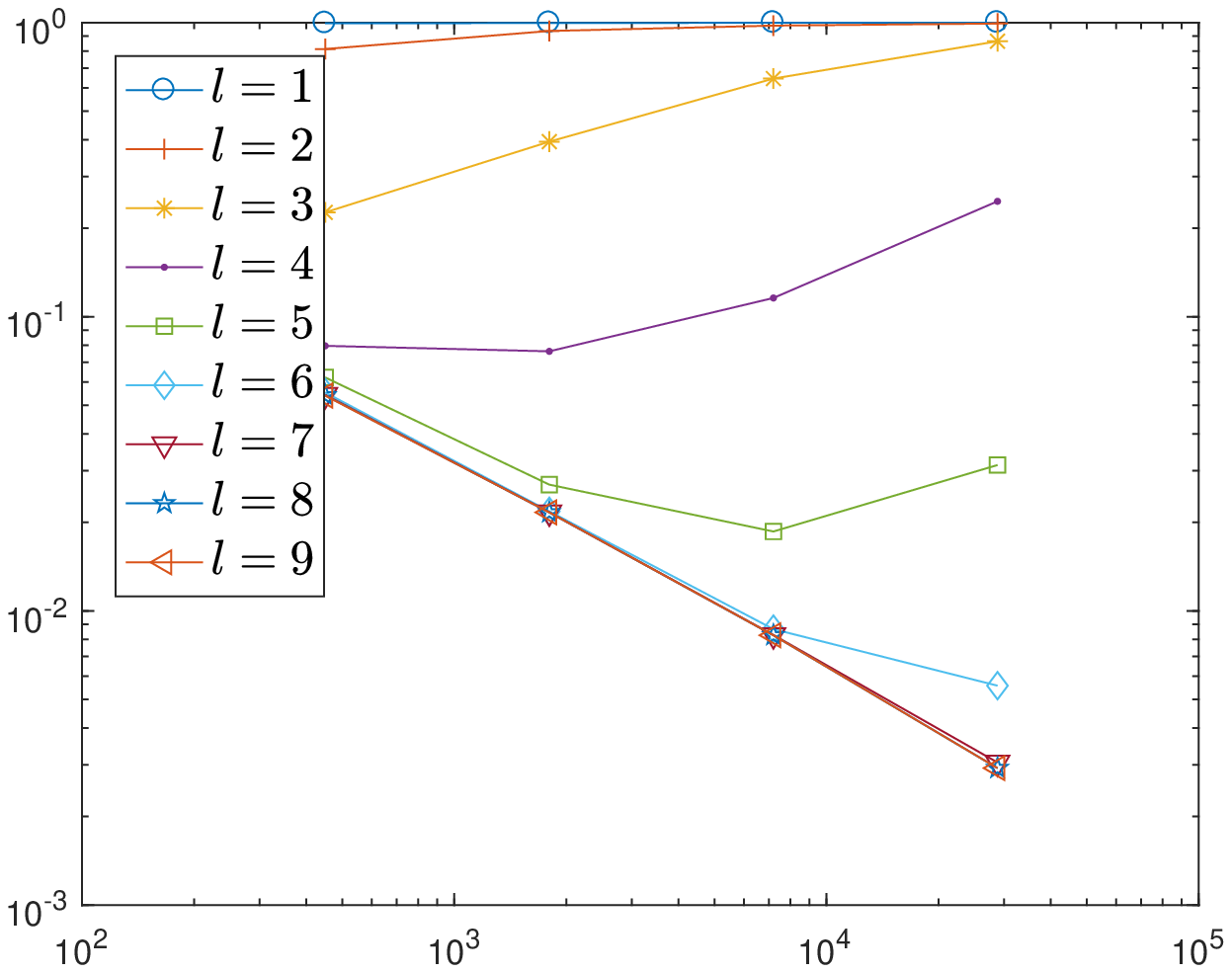}}
\subfigure[$\|u-u_{H}\|$ SPE10 layer 39]{
\includegraphics[width=0.23\textwidth]{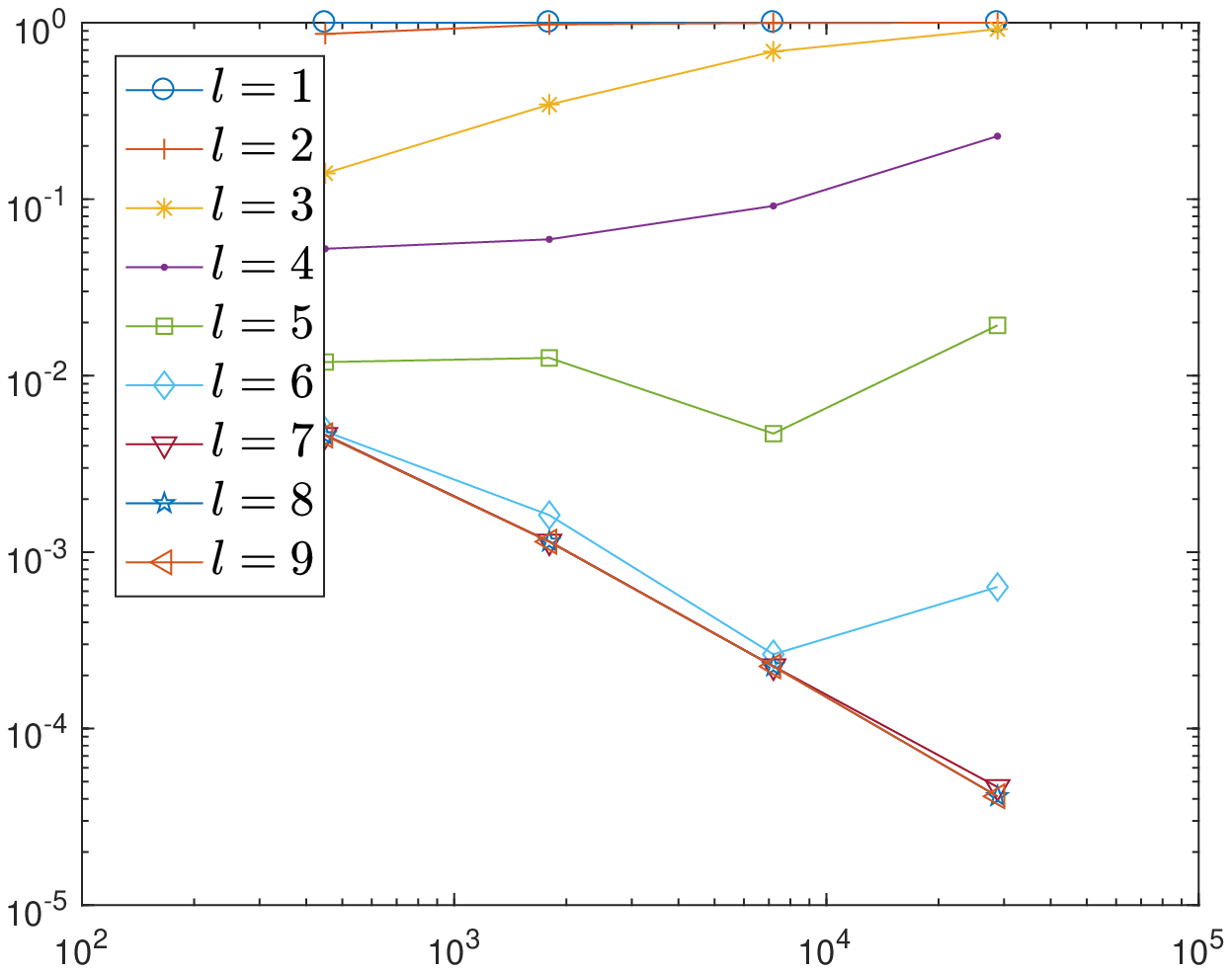}}
\subfigure[$\|y-y_{H}\|_{1}+\|p-p_{H}\|_{1}+\|u-u_{H}\|$ SPE10 layer 39]{
\includegraphics[width=0.23\textwidth]{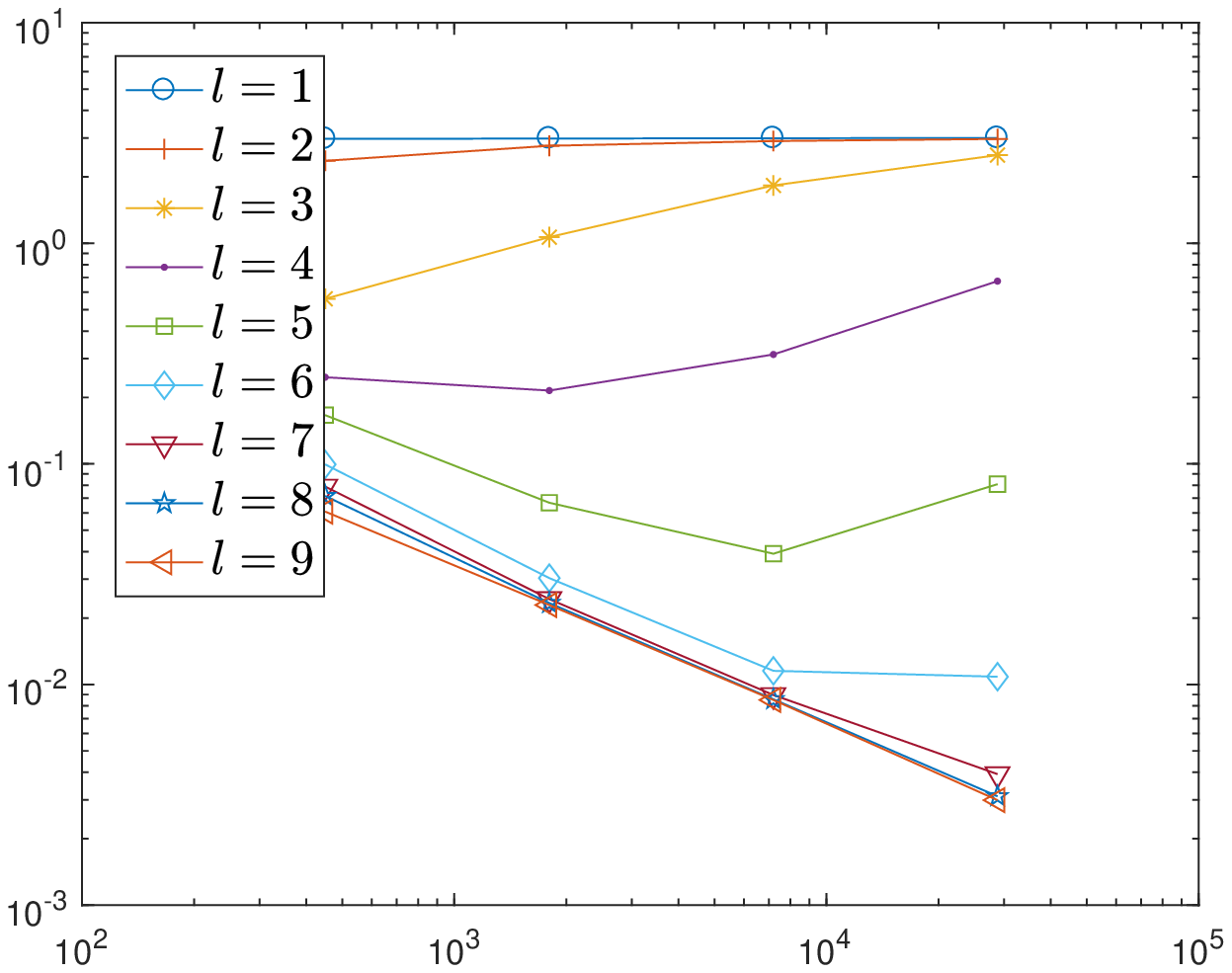}}
\caption{The relative errors of GRPS solutions $y_H$, $p_H$ and $u_H$ with respect to coarse $\dof$.}\label{coarseerr2} \centering
\end{figure}

\section{Conclusions}
\label{sec:conclusion}

In this paper, we have introduced the generalized rough polyharmonic splines (GRPS, including RPS) method for the efficient solution of optimal control problem govern by multiscale elliptic equation with rough coefficients. We have derived rigorous error estimates and the numerical experiments complement well with the theoretical analysis.

We plan to apply this strategy to optimal control problem with more general control conditions, such as boundary control, as well as optimal control problem for elasticity and Stokes equations with rough coefficients. 

The numerical homogenization based method in this paper can provide coarse scale accuracy of the optimal control solution. If fine scale accuracy is desired, numerical homogenization may help design efficient preconditioners, and furthermore, efficient mutlgrid/multilevel method such as Gamblet based multigrid method \cite{OwhadiMultigrid:2017, Zhang:2018}, can be used for the efficient resolution of the optimal control governed by multiscale problems.

\bibliographystyle{siamplain}
\bibliography{optimal}

\begin{thebibliography}{10}

\bibitem{Adams2003Sobolev}
{\sc R.~A. Adams and J.~J. Fournier}, {\em Sobolev spaces}, vol.~140, Elsevier,
  2003.

\bibitem{BaLip10}
{\sc I.~Babu{\v{s}}ka and R.~Lipton}, {\em Optimal local approximation spaces
  for generalized finite element methods with application to multiscale
  problems}, Multiscale Model. Simul., 9 (2011), pp.~373--406.

\bibitem{BaOs:2000}
{\sc I.~Babu{\v{s}}ka and J.~E. Osborn}, {\em Can a finite element method
  perform arbitrarily badly?}, Math. Comp., 69 (2000), pp.~443--462.

\bibitem{benedix2009posteriori}
{\sc O.~Benedix and B.~Vexler}, {\em A posteriori error estimation and
  adaptivity for elliptic optimal control problems with state constraints},
  Comput. Optim. and Appl., 44 (2009), pp.~3--25.

\bibitem{Berlyand:2010}
{\sc L.~Berlyand and H.~Owhadi}, {\em Flux norm approach to finite dimensional
  homogenization approximations with non-separated scales and high contrast},
  Arch. Ration. Mech. Anal., 198 (2010), pp.~677--721.

\bibitem{cao2004asymptotic}
{\sc L.~Q. Cao}, {\em Asymptotic expansion and convergence theorem of control
  and observation on the boundary for second-order elliptic equation with
  highly oscillatory coefficients}, Math. Models and Methods Appl. Sci., 14
  (2004), pp.~417--437.

\bibitem{Cao2012A}
{\sc L.~Q. Cao, J.~J. Liu, W.~Allegretto, and Y.~P. Lin}, {\em A multiscale
  approach for optimal control problems of linear parabolic equations}, Siam J.
  Control Optim., 50 (2012), pp.~3269--3291.

\bibitem{chen2008superconvergence}
{\sc Y.~P. Chen}, {\em Superconvergence of mixed finite element methods for
  optimal control problems}, Math. Comput., 77 (2008), pp.~1269--1291.

\bibitem{chen2009superconvergence}
{\sc Y.~P. Chen and Y.~Q. Dai}, {\em Superconvergence for optimal control
  problems governed by semi-linear elliptic equations}, J.Sci. Comput., 39
  (2009), pp.~206--221.

\bibitem{chen2015mixed}
{\sc Y.~P. Chen, Y.~Q. Huang, W.~B. Liu, and N.~N. Yan}, {\em A mixed
  multiscale finite element method for convex optimal control problems with
  oscillating coefficients}, Comp. Math. Appl., 70 (2015), pp.~297--313.

\bibitem{chen2011numerical}
{\sc Y.~P. Chen and Y.~L. Tang}, {\em Numerical methods for constrained
  elliptic optimal control problems with rapidly oscillating coefficients},
  East Asian J. Appl. Math., 1 (2011), pp.~235--247.

\bibitem{chen2003mixed}
{\sc Z.~M. Chen and T.~Y. Hou}, {\em A mixed multiscale finite element method
  for elliptic problems with oscillating coefficients}, Math. Comput., 72
  (2003), pp.~541--576.

\bibitem{ChuHou09}
{\sc C.~C. Chu, I.~G. Graham, and T.~Y. Hou}, {\em A new multiscale finite
  element method for high-contrast elliptic interface problems}, Math. Comp.,
  79 (2010), pp.~1915--1955.

\bibitem{Chung:2014}
{\sc E.~Chung, Y.~Efendiev, and G.~Li}, {\em An adaptive gmsfem for high
  contrast flow problems}, Journal of Computational Physics, 2 (2014),
  pp.~54--76.

\bibitem{ciarlet2002finite}
{\sc P.~G. Ciarlet}, {\em The finite element method for elliptic problems},
  SIAM, 2002.

\bibitem{EfGaWu:2011}
{\sc Y.~Efendiev, J.~Galvis, and X.~Wu}, {\em Multiscale finite element and
  domain decomposition methods for high-contrast problems using local spectral
  basis functions}, J. Comput. Phys., 230 (2011), pp.~937--955.

\bibitem{EfendievHou:2009}
{\sc Y.~Efendiev and T.~Y. Hou}, {\em Multiscale finite element methods},
  vol.~4 of Surveys and Tutorials in the Applied Mathematical Sciences,
  Springer, New York, 2009.
\newblock Theory and applications.

\bibitem{Engquist:2011}
{\sc B.~Engquist, H.~Holst, and O.~Runborg}, {\em Multi-scale methods for wave
  propagation in heterogeneous media}, Commun. Math. Sci., 9 (2011),
  pp.~33--56.

\bibitem{engwer2016efficient}
{\sc C.~Engwer, P.~Henning, A.~M{\aa}lqvist, and D.~Peterseim}, {\em Efficient
  implementation of the localized orthogonal decomposition method}, arXiv
  preprint arXiv:1602.01658,  (2016).

\bibitem{Gao2015Generalized}
{\sc K.~Gao, S.~Fu, R.~L. Gibson, E.~Chung, and Y.~Efendiev}, {\em Generalized
  multiscale finite-element method for elastic wave propagation in
  heterogeneous, anisotropic media}, J. Comput. Phys., 295 (2015),
  pp.~161--188.

\bibitem{Ge2017Heterogeneous}
{\sc L.~Ge, Y.~Z. Chang, and D.~P. Yang}, {\em Heterogeneous multiscale method
  for optimal control problem governed by parabolic equation with highly
  oscillatory coefficients ¡î}, J. Comput. Math., 328 (2017), pp.~644--660.

\bibitem{geveci1979approximation}
{\sc T.~Geveci}, {\em On the approximation of the solution of an optimal
  control problem governed by an elliptic equation}, RAIRO. Analyse
  num{\'e}rique, 13 (1979), pp.~313--328.

\bibitem{Hinze:2008}
{\sc M.~Hinze, R.~Pinnau, M.~Ulbrich, and S.~Ulbrich}, {\em Optimization with
  PDE constraints}, vol.~23, Springer Science \& Business Media, 2008.

\bibitem{HouWu:1997}
{\sc T.~Y. Hou and X.~H. Wu}, {\em A multiscale finite element method for
  elliptic problems in composite materials and porous media}, J. Comput. Phys.,
  134 (1997), pp.~169--189.

\bibitem{HouWu:1999}
{\sc T.~Y. Hou, X.~H. Wu, and Z.~Cai}, {\em Convergence of a multiscale finite
  element method for elliptic problems with rapidly oscillating coefficients},
  Math. Comp., 68 (1999), pp.~913--943.

\bibitem{jikov2012homogenization}
{\sc V.~Jikov, S.~M. Kozlov, and O.~A. Oleinik}, {\em Homogenization of
  differential operators and integral functionals}, Springer Science \&
  Business Media, 2012.

\bibitem{li:2010}
{\sc J.~Li}, {\em A multiscale finite element method for optimal control
  problems governed by the elliptic homogenization equations}, Comput. Math.
  Appl., 60 (2010), pp.~390--398.

\bibitem{lions1971optimal}
{\sc J.~L. Lions}, {\em Optimal control of systems governed by partial
  differential equations}, Springer-Verlag, Berlin, 1971.

\bibitem{bensoussan1978asymptotic}
{\sc J.~L. Lions, A.~Bensoussan, and G.~Papanicolaou}, {\em Asymptotic analysis
  for periodic structures}, vol.~5, North Holland, Amsterdam, 1978.

\bibitem{liu2013multiscale}
{\sc J.~J. Liu, L.~Q. Cao, and N.~N. Yan}, {\em Multiscale asymptotic analysis
  and computation of optimal control for elliptic systems with constraints},
  SIAM J. Numer. Anal., 51 (2013), pp.~1978--2004.

\bibitem{liu2003posteriori}
{\sc W.~B. Liu and N.~N. Yan}, {\em A posteriori error estimates for optimal
  control problems governed by parabolic equations}, Numer. Math., 93 (2003),
  pp.~497--521.

\bibitem{liu2008adaptive}
{\sc W.~B. Liu and N.~N. Yan}, {\em Adaptive Finite Element Methods for Optimal
  Control Governed by PDEs: C Series in Information and Computational Science
  41}, Science Press, 2008.

\bibitem{Liu:2018}
{\sc X.~Liu, L.~Zhang, and Z.~S.}, {\em Generalized polyharmonic splines for
  multiscale pdes with rough coefficients}, Preprint,  (2018).

\bibitem{Malqvist:2014b}
{\sc A.~M{\aa}lqvist and D.~Peterseim}, {\em Localization of elliptic
  multiscale problems}, Math. Comput., 83 (2014), pp.~2583--2603.

\bibitem{Owhadi:2015}
{\sc H.~Owhadi}, {\em Bayesian numerical homogenization}, SIAM Multiscale
  Model. Simul., 13 (2015), pp.~812--828.

\bibitem{OwhadiMultigrid:2017}
{\sc H.~{Owhadi}}, {\em {Multigrid with rough coefficients and Multiresolution
  operator decomposition from Hierarchical Information Games}}, SIAM Rev., 59
  (2017), pp.~99--149.

\bibitem{OwZh:2007a}
{\sc H.~Owhadi and L.~Zhang}, {\em Metric-based upscaling}, Comm. Pure Appl.
  Math., 60 (2007), pp.~675--723.

\bibitem{Owhadi:2011a}
{\sc H.~Owhadi and L.~Zhang}, {\em Localized bases for finite dimensional
  homogenization approximation with non-separated scales and high-contrast},
  SIAM Multiscale Model. Simul., 9 (2011), pp.~1373--1398.

\bibitem{OwhadiZhangBerlyand:2014}
{\sc H.~Owhadi, L.~Zhang, and L.~Berlyand}, {\em Polyharmonic homogenization,
  rough polyharmonic splines and sparse super-localization}, ESAIM: Math.
  Model. Numer. Anal., 48 (2014), pp.~517--552.

\bibitem{Troeltzsch:2010}
{\sc F.~Tr\"oltzsch}, {\em Optimal Control of Partial Differential Equations:
  Theory, Methods and Applications}, vol.~112 of Graduate Studies in
  Mathematics, American Mathematical Society, 2010.

\bibitem{EEngquist:2003}
{\sc E.~Weinan and B.~Engquist}, {\em The heterogeneous multiscale methods},
  Commun. Math. Sci., 1 (2003), pp.~87--132.

\bibitem{Weinan:2005}
{\sc E.~Weinan, P.~B. Ming, and P.~W. Zhang}, {\em Analysis of the
  heterogeneous multiscale method for elliptic homogenization problems}, J.
  Amer. Math. Soc., 18 (2005), pp.~121--156.

\bibitem{Wu:2002}
{\sc X.~H. Wu, Y.~Efendiev, and T.~Y. Hou}, {\em Analysis of upscaling absolute
  permeability}, Discrete Contin. Dyn. Syst. Ser. B, 2 (2002), pp.~185--204.

\bibitem{Yalchin2013Generalized}
{\sc E.~Yalchin, G.~Juan, and Y.~Thomas}, {\em Generalized multiscale finite
  element methods}, J. Comput. Phys., 251 (2013), pp.~116--135.

\bibitem{yang2008}
{\sc D.~Yang, Y.~Chang, and W.~Liu}, {\em A priori error estimate and
  superconvergence analysis for an optimal control problem of bilinear type},
  J. Comput. Math.,  (2008), pp.~471--487.

\bibitem{Zhang:2018}
{\sc L.~Zhang and H.~Owhadi}, {\em Optimal multigrid convergence and multilevel
  preconditioner estimate using operator adapted wavelets (gamblets)}.
\newblock preprint, 2018.

\end{thebibliography}
\end{document}